\theoremstyle{plain}
\newtheorem{theorem}{Theorem}
\newtheorem{proposition}[theorem]{Proposition}
\newtheorem{lemma}[theorem]{Lemma}
\newtheorem{corollary}[theorem]{Corollary}
\theoremstyle{definition}
\newtheorem{definition}[theorem]{Definition}
\theoremstyle{remark}
\newtheorem{remark}[theorem]{Remark}
\DeclareMathOperator{\hyp}{hyp}
\let\Im\relax
\DeclareMathOperator{\Im}{Im}
\let\Re\relax
\DeclareMathOperator{\Re}{Re}
\definecolor{green}{rgb}{.4,.6,.4}
\def\SL {\text{\rm SL}}
\def\PSL {\text{\rm PSL}}
\def\CC {{\mathbb C}}
\def\RR {{\mathbb R}}
\def\ZZ {{\mathbb Z}}
\def\HH {{\mathbb H}}
\def\NN {{\mathbb N}}
\def\G {\Gamma}
\def\vp {\varphi}
\begin{document}

\title{Construction of Poincar\'e-type series by generating kernels}

 \author{ Yasemin Kara, Moni Kumari, Jolanta Marzec, Kathrin Maurischat, Andreea Mocanu \and Lejla Smajlovi\'{c}}

\keywords{Wave  distribution, automorphic kernel, real weight Laplacian, multiplier system, sup-norm bound}

\subjclass[2010]{11F72, 43A22, 58J35, 35J05}







\address{\rm {\bf Yasemin Kara},
Faculty of Arts and Sciences, Mathematics Department,
Bogazici University, Bebek, Istanbul, 34342, Turkey}
\email{\sf yasemin.kara@boun.edu.tr}
 \address{\rm {\bf Moni Kumari}, School of Mathematics, Tata Institute of Fundamental Research, India}
\email{\sf moni@math.tifr.res.in}
\address{\rm {\bf Jolanta Marzec}, Department of Mathematics, Technical University of Darmstadt, Schloss\-gar\-ten\-stra\-ße~7,
64289 Darmstadt, Germany}
\email{\sf marzec@mathematik.tu-darmstadt.de}
 \address{\rm {\bf Kathrin Maurischat}, Lehrstuhl A Mathematik, RWTH Aachen University, Templergraben~55, 52062 Aachen, Germany}
\curraddr{}
\email{\sf kathrin.maurischat@matha.rwth-aachen.de}
\address{\rm {\bf Andreea Mocanu} }
\email{\sf andreea.mocanu@protonmail.com}
\address{\rm {\bf Lejla Smajlovi\'{c}}, Department of Mathematics,
University of Sarajevo,
Zmaja od Bosne 35,
71000 Sarajevo,
Bosnia and Herzegovina}
\email{\sf lejla.smajlovic@efsa.unsa.ba; lejlas@pmf.unsa.ba}
\maketitle

\begin{abstract}
Let $\Gamma\subset\PSL_2(\RR)$ be a Fuchsian group of the first kind having a fundamental domain with a finite hyperbolic area, and let $\widetilde\Gamma$ be its cover in $\SL_2(\RR)$. Consider the space of twice continuously differentiable, square-integrable functions on the hyperbolic upper half-plane, which transform in a suitable way with respect to a multiplier system of weight $k\in\RR$ under the action of $\widetilde\Gamma$. The space of such functions admits the action of the hyperbolic Laplacian $\Delta_k$ of weight $k$. Following an approach of \cite{JvPS16} (where $k=0$), we use the spectral expansion associated to $\Delta_k$ to construct a wave distribution and then identify the conditions on its test functions under which it represents automorphic kernels and further gives rise to Poincar\'e-type series. An advantage of this method is that the resulting  series may be naturally  meromorphically continued to the whole complex plane. Additionally, we derive sup-norm bounds for the eigenfunctions  in the discrete spectrum of $\Delta_k$.
\end{abstract}

\tableofcontents

\section{Introduction}

Let $\Gamma\subset\mathrm{PSL}_{2}(\mathbb{R})$ be a Fuchsian group of the first kind having a fundamental domain  $\mathcal{F}$  with a finite hyperbolic area.  It acts on the complex upper half-plane $\mathbb H$ and the quotient space can be identified with the Riemann surface $M=\Gamma\backslash \mathbb H$.

We fix a real weight $k$, such that there exists a unitary multiplier system $\chi$ of weight $k$ on the cover $\widetilde{\Gamma}$ of $\Gamma$ in $\mathrm{SL}_{2}(\mathbb{R})$. Throughout the paper we will assume the weight $k$ and the multiplier system to be arbitrary but fixed.

The hyperbolic Laplacian of weight $k$ on $M$ is the operator
$$
\Delta_k\:=\:- y^2(\partial_x^2+\partial_y^2)+2kiy\partial_x\:
$$
acting on the space $D_k$ of all twice continuously differentiable, square-integrable functions on $\mathbb{H}$ which transform in a suitable way with respect to the weight $k$ unitary multiplier system  $\chi$ on $\widetilde{\Gamma}$ (a precise definition is given in Section \ref{sec: prelim}).

The operator $\Delta_k$ is a special case of a differential operator investigated by Maass in \cite{M52} (see also \cite{roelcke1}); in some papers (e.g. \cite{Oshima}), $\Delta_k$ is referred to as the Maass-Laplacian.  It is the analogue of the non-Eucledian Laplacian for non-analytic automorphic forms on  $\Gamma\backslash \mathbb H$ of weight $k$.
Namely, the weighted Laplacian preserves the transformation behavior of functions from $D_k$ (cf. formula \eqref{delta property} below); it can be represented as a composition of differential operators of the first order (the lowering and raising Maass operators) mapping weight $k$ forms into forms of weight $k\pm 1$ (or $k\pm 2$, depending on the scaling taken); the kernel of $\Delta_k - k(1-k)$  is isomorphic to the set of meromorphic differentials on $\mathcal{F}$ of weight $k$ with unitary multiplier system, see \cite[Section 1.4]{Fi87} and references therein.

The spectral theory of $\Delta_k$ was carefully developed in \cite{roelcke2, roelcke3}, \cite{El73a, El73b} and \cite{Pa75, Pa76a,Pa76b}. It was proved that the Hilbert space of all square-integrable functions on $\mathbb{H}$ which transform in a suitable way with respect to the weight $k$ unitary multiplier system on $\widetilde{\Gamma}$
is a direct sum of countably many finite dimensional eigenspaces spanned by the eigenfunctions $\varphi_j$ associated to the discrete eigenvalues $\lambda_j$ of $\Delta_k$ and, in the case when the surface is non-compact, the Eisenstein series associated to each cusp and corresponding to the continuous spectrum.

In the seminal paper \cite{Fay77}, Fay computed the basic eigenfunction expansions (in both rectangular and hyperbolic polar coordinates) of the resolvent kernel for the operator $\Delta_k$ acting on the Hilbert space of real weight $k$ automorphic forms. He applied it to the construction of an automorphic ``prime form'' and automorphic functions with prescribed singularities. Following the ideas of Selberg \cite{Sel56}, Hejhal developed the trace formula for $\Delta_k$ on the space $\Gamma\backslash \mathbb H$ and derived various applications of it, such as a  distribution of pseudo-primes and the computation of the dimension of the space of classical cusp forms of weight $2k\in\mathbb{N}$.

\subsection{Poincar\'e-type series}
The resolvent kernel for $\Delta_k$ is the special case of Poincar\'e-type series, which is the series defined by summing over the group $\Gamma$ (or over its cover $\widetilde{\Gamma}$) the weight $k$ point-pair invariant, multiplied by certain other factors depending on the element of $\Gamma$ (or $\widetilde{\Gamma}$) in the index of the sum.
Loosely speaking, the weight $k$ point-pair invariant is a function $k(z,w)$ of $z,w \in\mathbb{H}$ which is radial/spherical (meaning it depends only upon the hyperbolic distance between $z$ and $w$) and which transforms ``nicely'' with respect to the weight $k$ pseudo-action of $\widetilde{\Gamma}$.
In most applications (and so is the case with the resolvent kernel), the point-pair invariant is taken to depend upon one or two complex parameter(s) with large enough real part (to ensure the convergence of the series).

When the point-pair invariant is well-chosen, Poincar\'e-type series become very important objects of study. This is because their Fourier expansions (in different coordinates) together with a Laurent/Taylor series expansion in an additional complex variable $s$ (usually at $s=1$ or $s=0$) carry important information. For example, when the multiplier system is identity, the constant term in the Laurent series expansion of the resolvent kernel at $s=1$ gives rise to a holomorphic automorphic ``prime form'', see \cite[Theorem 2.3]{Fay77}. Such a form is an important object in the construction of automorphic forms with prescribed singularities. 
In case when $M$ is compact and the multiplier system is one-dimensional, the constant term in the Laurent series expansion of the resolvent kernel at $s=1$ gives rise to a unique Prym differential with multipliers, see \cite[p. 163]{Fay77}.
The resolvent kernel asymptotic is used more recently in \cite{FJK19} with $k\in\tfrac{1}{2}\mathbb{N}$ to establish effective sup-norm bounds on average for weight $2k$ cusp forms for $\Gamma$.

In the series of papers \cite{Br85, Br86a, Br86b}, Bruggeman constructed families of Eisenstein and Poincar\'e series on the full modular group, which depend on a complex variable $s$ parameterizing the eigenvalue $1/4-s^2$ of $\Delta_k$, and also depend continuously (in fact real-analytically) on the weight $k$ (which in this special case can be taken to belong to the interval $(0,12)$). 
Bruggeman proved that, for $k\neq 0$, all square-integrable modular forms (of a certain type) occur in such families.

\subsection{Our results}
In this paper, we follow an approach of \cite{JvPS16} and \cite{CJS18} towards the construction of Poincar\'e-type series associated to the weighted Laplacian.
It is similar to Bruggeman's in the sense that we undertake the ``operator'' approach, looking at appropriate distributions. However, our starting point is the ``wave distribution'' (see Definition \ref{def:wave_dist}), a concept which does not appear in \cite{Br85}--\cite{Br86b}.

We illustrate this approach in Section \ref{sec:kernel construction}, by constructing a new Poincar\'e-type series $K_s(z,w)$, for $z,w\in \mathcal F$ and $\mathrm{Re}(s) \gg0$, which transforms ``nicely'' with respect to the weight $k$ multiplier system.
We then obtain its meromorphic continuation with respect to the $s$ variable and deduce its representation in terms of the sum over the group of a certain point-pair invariant.

More precisely, starting with the spectral expansion theorem (\cite[Theorem 1.6.4]{Fi87}), we construct the wave distribution associated to the weighted Laplacian. We prove in Proposition \ref{prop:wave-dist} that the wave distribution acts on a rather large space of test functions and that it can be represented as an integral operator with a certain kernel (Theorem \ref{thm:integral kernel}). In Proposition \ref{prop:wave-dist}, we also derive sufficient conditions on the test function so that the wave distribution acting on this test function produces an $L^2$-automorphic kernel.

To guarantee the absolute convergence of the aforementioned objects we need bounds for their discrete and, in case when the surface is non-compact, continuous spectra. It turned out that even though the spectral properties of $\Delta_k$ are well studied, both
analytically and computationally (see \cite{St08}), the properties of the eigenvalues associated
to its discrete spectrum that are different from the minimal eigenvalue $|k|(1-|k|)$ did not get much attention in the non-compact
setting. This is probably because in the non-compact setting the discrete spectrum still remains very mysterious; for example it is not even known in general whether it is finite or infinite.

For that reason, in Section \ref{sec:sup norm} we prove the
sup-norm bound $\sup_{z\in\mathcal{F}}|\varphi_j(z)| \ll |\lambda_j|$  for the eigenfunctions associated to discrete eigenvalues $\lambda_j$ of $\Delta_k$, uniform in $j$. This result is of independent interest, because it is proved in a general setting of a possibly non-compact surface, real weight $k$ and vector-valued
eigenfunctions $\varphi_j$. In the non-compact setting, we also derive the sup-norm bound for the growth of a certain weighted integral of the (vector-valued) Eisenstein series along the critical line (Proposition
\ref{prop:sup_norm_bound}.(b)).

The definition of the wave distribution enables one to construct automorphic kernels through the action of the wave distribution on suitably chosen test functions.
In Section \ref{sec:kernel construction}, a new $L^2$-automorphic kernel $K_s(z,w)$, called the basic automorphic kernel, is constructed through the action of the wave distribution on the test function $g_s(u)=\tfrac{\Gamma(s-1/2)}{\Gamma(s)} \cosh(u)^{-(s-1/2)}$ for $\mathrm{Re}s\gg0$. The kernel  $K_s(z,w)$ is called the basic kernel, because, as will be seen in \cite{KKMMMS20}, both the resolvent kernel and, consequently, the Eisenstein series, can be expressed in terms of this kernel and its translates in the $s$-variable. It is analogous to the basic automorphic kernel constructed in \cite{CJS18} in the setting of smooth, compact, projective K\"ahler varieties.

Using the properties of the wave distribution, it is possible to construct Poincar\'e-type series that are not square-integrable by taking appropriate sums/integrals of the wave distribution (see e.g. \cite[Section 7]{JvPS16} in the special case of the multiplier system equal to $1$). We leave this investigation to the subsequent paper \cite{KKMMMS20}.

This approach to the construction of Poincar\'e-type series has many advantages. Firstly, the construction depends only on the spectral properties of the Laplacian, and we believe it can be applied in more general settings, with the Laplacian replaced by the Casimir element (see a discussion in Section \ref{sec:weightedLap} below). Second, the problem of the meromorphic continuation of Poincar\'e series, which is usually attacked by means of Fourier series expansion and serious analytic considerations related to the coefficients in those series, is simplified. Namely, the meromorphic continuation essentially boils down to establishing a suitable functional relation between the Fourier transform of the test function at $s$ and at $s+\alpha$, for a suitable translation parameter $\alpha$ (see Lemma \ref{H-relation} below). For this reason, the scaling factor $\tfrac{\Gamma(s-1/2)}{\Gamma(s)}$ appears in the test function $g_s(u)$ above.

Moreover, this approach provides additional flexibility in the construction of Poincar\'e series, depending on the desired properties of the series, under the action of $\Delta_k$. Namely, assume that one is interested in the construction of Poincar\'e series $P_s(z,w)$ on $M$, such that $\Delta_k P_s(z,w)$ equals a certain function of $P_s(z,w)$. Then, representing $P_s(z,w)$ as the wave distribution acting on an unknown test function, this construction boils down to solving a second order differential equation satisfied by this test function, with some natural boundary conditions, such as e.g. decay to zero as $\mathrm{Re}(s)\to\infty$. This task is not easy, but it may turn out to be easier than solving the partial differential equation that is to be satisfied by the point-pair invariant generating the series $P_s(z,w)$ as a sum over the group $\Gamma$ (or its cover $\widetilde{\Gamma}$).

\subsection{Outline of the paper}

The paper is organized as follows: in Section \ref{sec: prelim}, we introduce the basic notation, define the weighted Laplacian, the unitary multiplier system and the spaces of functions we are interested in and we recall the spectral expansion theorem. In Section \ref{sec:aut kernel}, the construction of the geometric automorphic kernel is presented, following the approach undertaken in \cite{He76} and \cite{He83} and the pre-trace formula for the resolvent kernel is recalled from \cite{Fi87}. Section \ref{sec:sup norm} is devoted to proof of the non-trivial sup-norm bound for the eigenfunctions of the weighted Laplacian, a result necessary for the construction of the wave distribution associated to $\Delta_k$ in Section \ref{Sec:Wave dist}. Properties of the wave distribution are identified in Section \ref{Sec:Wave dist} and applied to the construction of the basic automorphic kernel in Section \ref{sec:kernel construction}.

\subsection{Acknowledgement} The authors would like to thank the organizers and sponsors of WINE3 for providing a stimulating atmosphere for collaborative work.
\newpage

\section{Preliminaries}\label{sec: prelim}

\subsection{Basic notation}\label{basic-notation}

Let $\Gamma\subset\mathrm{PSL}_{2}(\mathbb{R})$ denote a Fuchsian
group of the first kind. It acts by fractional linear transformations on the hyperbolic upper half-plane $\mathbb{H}:=\{x+iy\,|\,x,y\in\mathbb{R};\,y>0\}$.
We choose once and for all a connected fundamental domain $\mathcal{F}\subseteq \HH$ for $\Gamma$.
We further assume $\mathcal{F}$ (and therefore every fundamental domain) to have finite hyperbolic area.
Then  $M:=\Gamma\backslash\mathbb{H}$ is a finite volume hyperbolic Riemann surface, which we allow to have elliptic fixed points and $c_{\Gamma}$ cusps.
Locally,  $M$ is identified with its universal cover $\mathbb{H}$, and each point on $M$ has a unique representative in $\mathcal F$.
We rely on this identification of $M$ with $\mathcal F$ whenever a definition of a function on $M$ uses the choice of a representative in $\HH$. This in particular applies to the kernel functions in this paper.

Let $\widetilde{\Gamma}$ denote the cover of $\Gamma$ in $\mathrm{SL}_{2}(\mathbb{R})$, i.e. the set of all matrices $\gamma\in\mathrm{SL}_{2}(\mathbb{R})$ such that $[\pm \gamma] \in \Gamma$. Throughout this paper, assume that $\widetilde{\Gamma}$ contains $-I$, where $I$ stands for the identity element of $\mathrm{SL}_{2}(\mathbb{R})$.

Let $\mu_{\mathrm{hyp}}$ denote the hyperbolic metric on $M$, which is compatible with the complex structure of $M$, and has constant negative curvature equal to minus one.
The hyperbolic line element $ds^{2}_{\hyp}$ is given by
$
\displaystyle{ds^{2}_{\hyp}:=\frac{dx^{2}+dy^{2}}{y^{2}}}.
$
Denote the hyperbolic distance from $z\in\mathbb{H}$ to $w\in\mathbb{H}$ by $d_{\mathrm{hyp}}(z,w)$. It satisfies the relation
\begin{align}\label{rel_cosh_u}
\cosh\bigl(d_{\mathrm{hyp}}(z,w)\bigr)= 1+2 u(z,w),
\end{align}
where
\begin{align}
\label{def_u}
u(z,w):=\frac{\left|z-w\right|^{2}}{4\,\mathrm{Im}(z)\mathrm{Im}(w)}\,.
\end{align}
In the sequel, we will need the displacement function $\sigma(z,w)$, which is defined as
\begin{align}
\label{def_sigma}
\sigma(z,w):=1+\frac{\left|z-w\right|^{2}}{4\,\mathrm{Im}(z)\mathrm{Im}(w)}=
\frac{\left|z-\overline{w}\right|^{2}}{4\,\mathrm{Im}(z)\mathrm{Im}(w)}\,.
\end{align}

\subsection{Weighted Laplacian}\label{sec:weightedLap}
For any real $k$, denote by
$$
\Delta_k\:=\: -y^2(\partial_x^2+\partial_y^2)+2kiy\partial_x
$$
the hyperbolic Laplacian on $M$ of weight $k$, which will be applied to twice differentiable functions $f:\mathbb{H}\to\CC$.
H. Maass \cite{M52} introduced in broader generality, for real numbers $\alpha$ and $\beta$, the differential operator
$$
\Delta_{\alpha,\beta}\:=\: -y^2\left( \partial_x^2 + \partial_y^2 \right) + (\alpha - \beta)iy\partial_x - (\alpha +\beta)y\partial_y\:.
$$
Specializing to $\alpha+\beta=0$, we recover the classical Laplace-Beltrami operator on $\mathbb H$ of weight  $\alpha-\beta$ (which is, among others,
subject of Roelcke's work \cite{roelcke1,roelcke2,roelcke3}).

There is a slight ambiguity in the notation used: the  operator $\Delta_{\alpha,\beta}$ with $\alpha - \beta=\alpha +\beta=k$ is also called the weighted Laplacian of weight $k$ in the literature. It is that one which is used in the theory of mock modular forms (see e.g. \cite{bringmann-kudla}).

Our choice of the weighted Laplacian is the specialization to weight $2k$ of the Laplace-Beltrami operator
\begin{equation*}
 \widetilde \Delta\:=\:-y^2\left( \partial_x^2 + \partial_y^2 \right) + y\partial_x\partial_\theta\:,
\end{equation*}
which in turn equals the Casimir operator for $\mathrm{SL}(2,\mathbb{R})$, up to a multiplicative constant.
More precisely, the action of $\mathrm{SL}(2,\mathbb{R})$ on $L^2(\widetilde\Gamma\backslash \mathrm{SL}(2,\mathbb{R}))$ by right translations comes along with an action of its Lie algebra on $C^\infty$-vectors, given by differential operators.
The Casimir element generates the center of the universal enveloping Lie algebra and, written with respect to the coordinates $\partial_x$, $\partial_y$, $\partial_\theta$, this operator coincides with the Laplace-Beltrami operator $\tilde \Delta$ above. By Schur's lemma, the Casimir acts as a constant on any irreducible representation of $\mathrm{SL}(2,\mathbb{R})$. In turn, any eigenfunction of the Casimir, respectively the Laplace-Beltrami,
together with its $\mathrm{SL}(2,\mathbb{R})$-translates generates  an irreducible representation.
On the other hand, when restricting to eigenfunctions of weight $2k$ for the maximal compact subgroup $\mathrm{SO}(2)$ of $\mathrm{SL}(2,\mathbb{R})$, the Casimir operator specializes to our choice of the weighted Laplacian $\Delta_k$. In turn, the isomorphism of $\widetilde\Gamma\backslash\mathrm{SL}(2,\mathbb{R})/\mathrm{SO}(2)$ with $\Gamma\backslash \mathbb H$ induces an isomorphism of the $\mathrm{SO}(2)$-eigenfunctions on $\widetilde\Gamma\backslash\mathrm{SL}(2,\mathbb{R})$ with automorphic forms of weight $2k$ on $\mathbb H$.
\subsection{Unitary multiplier system}
For every $\gamma=\left(
                     \begin{array}{cc}
                       * & * \\
                       c & d \\
                     \end{array}
                   \right) \in\widetilde\Gamma$ and every complex number $z$,
define $j(\gamma,z):=cz+d$ and $J_{\gamma,k}(z):=\exp(2ik\arg j(\gamma,z))$.
\begin{definition}\label{D:point-pair}
 A function $\mu:\HH^2\to \mathbb C^\ast$ satisfying the transformation property
 $$
 \mu(\gamma z,\gamma w)=\mu(z,w)J_{\gamma,k}(z)J_{\gamma,k}(w)^{-1}
 $$
 for all $\gamma\in\mathrm{SL}_2(\RR)$ and all $z,w\in\HH$ is called a \textit{weight $k$ point-pair invariant}.
\end{definition}

Note that, due to the fact that $\mathrm{SL}_2(\RR)$ acts transitively on point-pairs of a fixed hyperbolic distance, a point-pair invariant of weight zero is just an ordinary point-pair invariant depending only on the hyperbolic distance of the point-pair $(z,w)$. Further,
if $\mu$ is a weight $k$ point-pair invariant and $\Phi$ is a weight zero point-pair invariant,
then $\mu\cdot\Phi$ is also a weight $k$ point-pair invariant. Furthermore, if $\nu$ is also a weight $k$ point-pair invariant, then $\mu/\nu$ is a point-pair invariant of weight zero.

\begin{lemma}\label{lemma-H_k-definition}
Decompose the real number $k=k_1+k_2$ with $k_1\in\mathbb Z$ and $k_2\in (-\frac{1}{2},\frac{1}{2}]$, and define $z^k=z^{k_1}\exp(k_2 \log z)$ for the principal branch of the complex logarithm $\log z$.
 The function $H_k:\HH^2\to\mathbb C^\ast$ given by
 $$
H_{k}(z,w):=\left(-\frac{\lvert z-\overline{w}\rvert^2}{(z-\overline{w})^2}\right)^{k}=
\overline{\left(\frac{z-\overline{w}}{w-\overline{z}}\right)}^k=\left(\frac{(1-\zeta)^2}{\lvert 1-\zeta\rvert ^2}\right)^k, \quad \mathrm{for} \quad \zeta=\frac{z-w}{z-\overline{w}}\:,
$$
is a weight $k$ point-pair invariant.
\end{lemma}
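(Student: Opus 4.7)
The approach is direct computation using the standard cocycle identity for the $\SL_2(\RR)$-action on $\HH$. First I would simplify $H_k$. Factoring $|z-\bar w|^2=(z-\bar w)\overline{(z-\bar w)}=(z-\bar w)(\bar z-w)$ gives
$$-\frac{|z-\bar w|^2}{(z-\bar w)^2}=\frac{w-\bar z}{z-\bar w},$$
a complex number of modulus one since $|w-\bar z|=|z-\bar w|$. Because both $z-\bar w$ and $w-\bar z$ have strictly positive imaginary part $\Im z+\Im w$, both lie in the open upper half-plane, and a short calculation identifies the principal argument of the quotient as $\pi-2\arg(z-\bar w)\in(-\pi,\pi)$; hence
$$H_k(z,w)=\exp\!\bigl(ik[\pi-2\arg(z-\bar w)]\bigr)$$
unambiguously. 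The equivalence with the other two forms in the statement is straightforward: for the second form use $(w-\bar z)/(z-\bar w)=\overline{(z-\bar w)/(w-\bar z)}$ together with $|z-\bar w|=|w-\bar z|$, and for the third use $1-\zeta=2i\,\Im(w)/(z-\bar w)$.

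Second, I would derive the cocycle identity
$$\gamma z-\overline{\gamma w}=\frac{z-\bar w}{(cz+d)(c\bar w+d)}\qquad\text{for } \gamma=\left(\begin{smallmatrix}a&b\\c&d\end{smallmatrix}\right)\in\SL_2(\RR),$$
by direct expansion, using $ad-bc=1$ and the reality of the entries (which yields $\overline{\gamma w}=(a\bar w+b)/(c\bar w+d)$). Taking arguments and noting that $c\bar w+d=\overline{cw+d}$, so $\arg(c\bar w+d)=-\arg(cw+d)$, produces
$$\arg(\gamma z-\overline{\gamma w})\equiv\arg(z-\bar w)-\arg j(\gamma,z)+\arg j(\gamma,w)\pmod{2\pi}.$$
Substituting this into the unambiguous expression for $H_k(\gamma z,\gamma w)$ obtained in the first step and comparing with
$$J_{\gamma,k}(z)\,J_{\gamma,k}(w)^{-1}=\exp\!\bigl(2ik\arg j(\gamma,z)-2ik\arg j(\gamma,w)\bigr)$$
gives the desired relation $H_k(\gamma z,\gamma w)=H_k(z,w)\,J_{\gamma,k}(z)\,J_{\gamma,k}(w)^{-1}$.

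The main obstacle is the potential $2\pi$-ambiguity when $k$ is not an integer: the congruence of arguments above does not automatically lift to an equality of complex exponentials $\exp(ik\,\cdot)$. To handle this I would observe that $\gamma z-\overline{\gamma w}$ again has strictly positive imaginary part, so its principal argument lies in $(0,\pi)$, and perform a short case analysis on the sign of $c$ (which dictates whether $cz+d$ and $cw+d$ lie in the upper or lower half-plane) to check that the four principal arguments involved combine without wraparound. Invoking the decomposition $k=k_1+k_2$ with $k_1\in\ZZ$ and $k_2\in(-\tfrac{1}{2},\tfrac{1}{2}]$ fixed in the statement, the integer part $k_1$ is insensitive to shifts by $2\pi$, so only the fractional part $k_2$ requires this branch bookkeeping; the smallness of $|k_2|$ together with the half-plane confinement of the relevant factors makes the verification routine.
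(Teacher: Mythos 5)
Your proposal is correct and essentially reproduces the paper's argument: the paper works with the auxiliary quantity $r(z,w)=1-\zeta=2i\Im(w)/(z-\overline{w})$, whose transformation rule is the same cocycle identity you use, and it settles the branch issue by exactly the observation you propose --- $\arg(z-\overline{w})$ and $\arg(\gamma z-\overline{\gamma w})$ are confined to $(0,\pi)$ while $cz+d$ and $cw+d$ lie in a common half-plane, so the principal arguments combine with no $2\pi$ shift. The only cosmetic difference is that, once this exact argument identity is established, the conclusion follows for every real $k$ simultaneously, so your appeal to the smallness of $|k_2|$ is unnecessary (though harmless).
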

\begin{proof}
The function $r:\HH^2\to\mathbb C$ given by
$$
r(z,w)=1-\frac{z-w}{z-\overline{w}}=\frac{2i\Im w}{z-\overline{w}}
$$
transforms under $\mathrm{SL}_2(\mathbb R)$ as
\begin{equation}\label{r-transformation}
r(\gamma z, \gamma w)=r(z,w)\left(\frac{cz+d}{cw+d}\right).
\end{equation}
Note that, for all $z,w\in\mathbb H$, we have $z-\overline{w}\in\HH$. In particular, $0\leq\arg(z-\overline{w})<\pi$, 
which implies that
$$-\frac{\pi}{2} <\arg(r)=\frac{\pi}{2}-\arg(z-\overline{w})\leq\frac{\pi}{2}.$$
We claim that the argument of $r$ transforms as
$$
\arg(r(\gamma z, \gamma w))=\arg(r(z,w))+\arg(cz+d)-\arg(cw+d)\:.
$$
To see this, notice that both $cz+d$ and $cw+d$ belong either to upper or lower complex half-plane. In particular,  either $\arg(cz+d),\arg(cw+d)\in (0,\pi)$ or $\arg(cz+d),\arg(cw+d)\in (-\pi ,0)$, and it follows that
 $$
 \arg\left(\frac{cz+d}{cw+d}\right) =\arg(cz+d)-\arg(cw+d).
 $$
Consequently, since in \eqref{r-transformation} both values of $r$ have arguments in $(-\frac{\pi}{2},\frac{\pi}{2}]$ and since
 $$
\arg(r(\gamma z, \gamma w))=\arg(r(z,w))+\arg\left(\frac{cz+d}{cw+d}\right) +2\pi l\:
$$
for some $l\in\mathbb Z$, we must have $l=0$.
Since
$$
H_k(z,w)\:=\:H_k(r(z,w))\:=\: \left(\frac{r}{\lvert r\rvert}\right)^{2k_1}\cdot \left(\frac{r}{\lvert r\rvert}\right)^{2k_2},
$$
the claim of the lemma follows trivially for $k=k_1\in\ZZ$, using the definition of $J_{\gamma,k_1}$. For $k=k_2\in(-\frac{1}{2},\frac{1}{2}]$, it follows from  the above by noticing that, for exponent $2k_2\in(-1,1]$, the power is still given by multiplying the argument, $(e^{i\arg(z)})^{2k_2}=e^{2ik_2\arg(z)}$. The lemma follows for arbitrary real $k$ from our choice of the $k$-th power.
\end{proof}
For every $\gamma_1=\left(\begin{smallmatrix}
a_1 & a_2\\
a_3 & a_4
\end{smallmatrix}\right)$ and $\gamma_2=\left(\begin{smallmatrix}
b_1 & b_2\\
b_3 & b_4
\end{smallmatrix}\right)\in\SL_2(\RR)$, write $\gamma_1\gamma_2=\left(\begin{smallmatrix}
c_1 & c_2\\
c_3 & c_4
\end{smallmatrix}\right)$. For every $z\in\HH$, we have
$$
a_3\gamma_2z+a_4=\frac{c_3z+c_4}{b_3z+b_4}
$$
and therefore there exists an integer $w(\gamma_1,\gamma_2)\in\{-1,0,1\}$, which is independent of $z$, such that
\begin{equation}\label{w property}
2\pi w(\gamma_1,\gamma_2)=\arg(a_3\gamma_2z+a_4)+\arg(b_3z+b_4)-\arg(c_3z+c_4).
\end{equation}
The function $\omega_k(\gamma_1,\gamma_2):= \exp(4\pi ik w(\gamma_1,\gamma_2))$ is called a factor system of weight $k$.

Let $(V,\langle\cdot,\cdot\rangle)$ be a $d$-dimensional unitary $\CC$-vector space ($d < \infty$), where the inner product $\langle\cdot,\cdot\rangle$ is semi-linear in the first argument. Let $U(V)$ denote the  unitary group, i.e. the automorphisms $u$ of $V$ respecting the scalar product, $\langle u(v), u(w)\rangle=\langle  v,w\rangle$ for all $v,w\in V$.

\begin{definition}
A \textit{(unitary) multiplier system of weight $k$} on $\widetilde\G$ is a function $\chi: \widetilde\G \to U(V)$ which satisfies the properties:
\begin{enumerate}[label=$(\alph*)$]
\item $\chi(-I)=e^{-2\pi i k}\mathrm{id}_V$ and
\item $\chi(\gamma_1\gamma_2)=\omega_k(\gamma_1,\gamma_2)\chi(\gamma_1)\chi(\gamma_2)$.
\end{enumerate}
\end{definition}

If $\widetilde\G$ contains parabolic elements, then there exists a unitary multiplier system on $\widetilde\G$ for every weight $k\in\RR$; when $\widetilde\G$ does not contain parabolic elements, a unitary multiplier system on $\widetilde\G$ exists for certain rational values of weight $k$, depending on the signature of the group $\Gamma$, see \cite[Proposition 1.3.6]{Fi87}.  From now on, we fix $k\in\RR$ such that there exists a unitary multiplier system $\chi:\widetilde\G\to U(V) $ of weight $k$ on $\widetilde\G$, which we also fix.

\begin{lemma}\label{L:point-pair}
 For every weight $k$ point-pair invariant $\mu$ such that the series
 \begin{equation*}
  S_{\Gamma,\mu}(z,w):=\sum_{\gamma\in\widetilde\Gamma}\chi(\gamma)J_{\gamma,k}(w)\mu(z,\gamma w)
 \end{equation*}
 is absolutely convergent for all $z,w\in\HH$, we have the identity
\begin{equation*}
 S_{\Gamma,\mu} (\eta z,w)J_{\eta,k}(z)^{-1}=\chi(\eta)S_{\Gamma,\mu}(z,w)\
 \end{equation*}
for  all $\eta\in\widetilde\Gamma$.
\end{lemma}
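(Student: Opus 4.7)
The plan is to exploit three ingredients in order: the defining transformation law of the point-pair invariant $\mu$, a change of summation variable in $\widetilde\Gamma$, and the cocycle identities that relate $J_{\cdot,k}$ and $\chi$ through the factor system $\omega_k$.

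First I would start from
$$S_{\Gamma,\mu}(\eta z,w)=\sum_{\gamma\in\widetilde\Gamma}\chi(\gamma)J_{\gamma,k}(w)\,\mu(\eta z,\gamma w),$$
and rewrite $\mu(\eta z,\gamma w)=\mu(\eta z,\eta\eta^{-1}\gamma w)$ so that Definition \ref{D:point-pair} (applied to the pair $(z,\eta^{-1}\gamma w)$ under $\eta$) yields
$$\mu(\eta z,\gamma w)=\mu(z,\eta^{-1}\gamma w)\,J_{\eta,k}(z)\,J_{\eta,k}(\eta^{-1}\gamma w)^{-1}.$$
Pulling the $z$-dependent factor $J_{\eta,k}(z)$ out of the sum and performing the substitution $\gamma'=\eta^{-1}\gamma$ (which is a bijection of $\widetilde\Gamma$), the sum becomes
$$J_{\eta,k}(z)\sum_{\gamma'\in\widetilde\Gamma}\chi(\eta\gamma')\,J_{\eta\gamma',k}(w)\,J_{\eta,k}(\gamma' w)^{-1}\,\mu(z,\gamma' w).$$

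Next I would use the cocycle $j(\eta\gamma',w)=j(\eta,\gamma'w)\,j(\gamma',w)$ together with the definition \eqref{w property} of $w(\eta,\gamma')$ to deduce
$$J_{\eta\gamma',k}(w)=\omega_k(\eta,\gamma')^{-1}\,J_{\eta,k}(\gamma' w)\,J_{\gamma',k}(w),$$
so that $J_{\eta\gamma',k}(w)\,J_{\eta,k}(\gamma' w)^{-1}=\omega_k(\eta,\gamma')^{-1}\,J_{\gamma',k}(w)$. Simultaneously, from property $(b)$ of the multiplier system we have $\chi(\eta\gamma')=\omega_k(\eta,\gamma')\,\chi(\eta)\chi(\gamma')$. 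Since $\omega_k(\eta,\gamma')$ is a scalar, it commutes with the operators $\chi(\eta)\chi(\gamma')$ and the two factors $\omega_k(\eta,\gamma')$ and $\omega_k(\eta,\gamma')^{-1}$ cancel, leaving
$$\chi(\eta\gamma')J_{\eta\gamma',k}(w)J_{\eta,k}(\gamma' w)^{-1}=\chi(\eta)\,\chi(\gamma')\,J_{\gamma',k}(w).$$
Substituting this back and factoring $\chi(\eta)$ out of the sum produces
$$S_{\Gamma,\mu}(\eta z,w)=J_{\eta,k}(z)\,\chi(\eta)\,S_{\Gamma,\mu}(z,w),$$
which after multiplying by $J_{\eta,k}(z)^{-1}$ is the claimed identity. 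Absolute convergence, assumed in the hypothesis, justifies the re-indexing and the factorization.

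The only genuinely delicate point is the bookkeeping in step two: one must verify that the sign conventions in \eqref{w property} give the cocycle identity for $J_{\cdot,k}$ with the correct exponent of $\omega_k$, so that after combining with property $(b)$ of $\chi$ the factor system cancels exactly (rather than squares or doubles). Once this is checked once and for all, the rest is formal manipulation.
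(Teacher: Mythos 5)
Your proposal is correct and follows essentially the same route as the paper's proof: re-index via $\gamma'=\eta^{-1}\gamma$ (justified by absolute convergence), apply the transformation law of the weight $k$ point-pair invariant, and cancel the factor system using property $(b)$ of $\chi$ together with the consequence of \eqref{w property} that $\omega_k(\eta,\gamma')=J_{\eta,k}(\gamma' w)J_{\gamma',k}(w)J_{\eta\gamma',k}(w)^{-1}$. The paper leaves these manipulations implicit, whereas you spell them out, including the sign bookkeeping for $\omega_k$, which you handle correctly.
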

\begin{proof}
We have to prove that
\begin{equation*}
\chi(\eta)\sum_{\gamma\in\widetilde\Gamma}\chi(\gamma)J_{\gamma,k}(w)\mu(z,\gamma w)=
 \sum_{\gamma\in\widetilde\Gamma}\chi(\gamma)J_{\gamma,k}(w)J_{\eta,k}(z)^{-1}\mu(\eta z,\gamma w)
\end{equation*}
for every $\eta$ in $\widetilde{\Gamma}$. Setting $\gamma'=\eta^{-1}\gamma$ and summing over $\gamma'$ instead of $\gamma$ by absolute convergence of the series, the above follows from
the definitions of multiplier system and weight $k$ point-pair invariant, combined with the implication of \eqref{w property} that, for any $w\in\HH$ and any $\eta,\gamma\in\widetilde\Gamma$,
\begin{equation*}
\omega_k(\eta,\gamma)=J_{\eta,k}(\gamma w)J_{\gamma,k}(w)J_{\eta\gamma,k}(w)^{-1}.\qedhere
\end{equation*}
\end{proof}

\subsection{Spectral expansion}\label{sec:spectral exp}

For every $\gamma\in\widetilde\G$, define the linear operator $|[\gamma,k]$ on the space of functions $f:\HH\to V$ by
$$
f|[\gamma,k] (z) := f(\gamma z)J_{\gamma,k}(z)^{-1}.
$$
It is important to notice that $\Delta_k$ commutes with $|[\gamma,k]$, in other words
$$
\Delta_k(f|[\gamma,k])=(\Delta_kf)|[\gamma,k]
$$
for every twice continuously differentiable function $f:\HH \to V$. It follows that, if $f$ is such a function and it additionally satisfies
\begin{equation}\label{chi property}
f|[\gamma,k]=\chi(\gamma)f
\end{equation}
for every $\gamma\in\widetilde\G$, then
\begin{equation}\label{delta property}
(\Delta_kf)|[\gamma,k]=\chi(\gamma)\Delta_kf.
\end{equation}
Notice that if $f_1,f_2:\HH\to V$ are functions satisfying \eqref{chi property} then $\langle f_1,f_2\rangle$ is a $\widetilde\G$-invariant, vector-valued function on $\HH$.
Let $\mathcal{F}$ denote an arbitrary fundamental domain of $\G$. Let $\mathcal{H}_k$ denote the space of (equivalence classes of $\mu_{\mathrm{hyp}}$-almost everywhere equal) $\mu_{\mathrm{hyp}}$-measurable functions $f:\HH\to V$ which satisfy the properties:
\begin{enumerate}[label=$(\alph*)$]
\item $f|[\gamma,k](z)=\chi(\gamma)f(z)$ for all $\gamma\in\widetilde\G$ and
\item $\| f\|^2:= \int_{\mathcal{F}} \langle f,f\rangle d\mu_{\mathrm{hyp}} < \infty$.
\end{enumerate}
It follows that $\mathcal{H}_k$ is a Hilbert space when equipped with the scalar product
$$
(f,g):=\int_{\mathcal{F}}\langle f,g\rangle d\mu_{\mathrm{hyp}}\:.
$$
For all $f_1,f_2\in\mathcal H_k$, the function $\langle f_1,f_2\rangle$ given by the scalar product on $V$ determines an almost everywhere well-defined function on $\mathbb{H}$. 
In particular, when  $V=\CC$, $\langle f,g\rangle=\bar f \cdot g$ and $(f,g)=\int_{\mathcal{F}}\overline{f(z)}g(z) d\mu_{\mathrm{hyp}}(z)$ is the usual $L^2$-scalar product. From now on, the equivalence class of a function $f:\HH\to V$ under the equivalence relation $\mu_{\mathrm{hyp}}$-almost everywhere equal will be denoted by $f$ by abuse of notation.
Moreover, identify $V=\CC^d$, which implies that
\begin{equation}\label{C_inner_product}
\langle x,y\rangle=\sum_{j=1}^d\overline{x}_jy_j
\end{equation}
for every $x=(x_1,\dots,x_d)^t$ and $y=(y_1,\dots,y_d)^t$ in $V$. Here, $X^t$ denotes the transpose of a matrix $X$. With these conventions, measurability, differentiability, integrability, etc. of any function $f:\HH\to V$ are defined component-wise.

The norm on $V$ corresponding to the scalar product $\langle \cdot,\cdot \rangle$ will be denoted by $|\cdot|_V$.
We will at times apply the Hermitian inner product to $d\times d$ matrices, more precisely to $xy^t$ for arbitrary $x,y\in V$. Denote the resulting norm by $|\cdot |_{d\times d}$ and note that
$$|xy^t|_{d\times d}=|x|_V|y|_V.$$

Let $D_k$ denote the set of all twice continuously differentiable functions $f\in\mathcal{H}_k$ such that $\Delta_kf \in \mathcal{H}_k$. The operator $\Delta_k:D_k\to\mathcal{H}_k$ is essentially self-adjoint \cite[Theorem 1.4.5]{Fi87}. Let $\tilde{\Delta}_k: \tilde{D}_k\to \mathcal{H}_k$ denote the unique maximal self-adjoint extension of $\Delta_k$ with $\tilde{D}_k$ as its domain.

In case when $\widetilde\G$ contains parabolic elements, let $\zeta_1,\dots,\zeta_{c_{\G}}$ denote a complete system of representatives of the $\widetilde\G$-equivalence classes of cusps of $\widetilde\G$. Choose matrices $A_1,\dots,A_{c_{\G}}\in\SL_2(\RR)$, such that the stabilizers $\widetilde\G_{\zeta_j}:=\{\gamma\in\widetilde\G\,|\,\gamma\zeta_j=\zeta_j\}$ are generated by $-I$ and $T_j:=A_j^{-1}\left(\begin{smallmatrix}
1&1\\
0&1
\end{smallmatrix}\right)A_j$. Let $m_j$ denote the multiplicity of the eigenvalue $1$ of $\chi(T_j)$. For every $j\in\{1,\dots,c_{\G}\}$, choose an orthonormal basis $\{v_{j1},\dots,v_{jd}\}$ of $V$ such that
$$\chi(T_j)v_{jl}=e^{2\pi i\beta_{jl}}v_{jl},\text{ with }
\begin{cases}
\beta_{jl}=0, & \text{if }1\leq l\leq m_j \text{ and}\\
\beta_{jl}\in(0,1), & \text{if }m_j<l\leq d.
\end{cases}
$$

For every $z\in\HH$ and $s\in\CC$ with $\Re(s)>1$, define the \textit{parabolic Eisenstein series of weight $k$ for the cusp $\zeta_j$, the multiplier system $\chi$ and the eigenvector $v_{jl}$} as the series
\begin{equation}\label{def:Eis series}
E_{jl}(z,s):=\frac{1}{2}\sum_{\gamma\in\widetilde\G_{\zeta_j}\backslash \widetilde\G}\omega_k(A_j,\gamma)^{-1}\chi(\gamma)^{-1}v_{jl}J_{A_j\gamma,k}(z)^{-1}(\Im(A_j\gamma z))^s.
\end{equation}

This series converges uniformly absolutely in $(z,s)\in\HH\times\{s\in\CC\,|\,\Re(s)>1+\varepsilon\}$ for every $\varepsilon>0$, hence it defines a $C^\infty$-function from $\HH\times\{s\in\CC\,|\,\Re(s)>1\}$ to $V$, which is holomorphic in $s$.  It was shown in \cite{roelcke3} that, for every $s\in\CC$ such that $\Re(s)>1$, the series $E_{jl}(\cdot,s)$ is an eigenfunction of $\Delta_k$, with eigenvalue $s(1-s)$:
\begin{equation}\label{eq:Eis laplacian}
\Delta_k E_{jl}(\cdot,s)=s(1-s)E_{jl}(\cdot,s).
\end{equation}
Furthermore, for every fixed $z\in\HH$, the series $E_{jl}(z,\cdot)$ can be extended to a meromorphic function on $\CC$, which is denoted in the same way. This function has only simple poles in the half-plane $\{s\in\CC\,|\,\Re(s)>1/2\}$, which all lie in the interval $(1/2,1]$. It has no poles on the line $\{s\in\CC\,|\,\Re(s)=1/2\}$, from which it follows that $E_{jl}$ is continuous on $\HH\times\{s\in\CC\,|\,\Re(s)=1/2\}$. The Eisenstein series $E_{jl}$ satisfies \eqref{eq:Eis laplacian} in this domain. Recall the following theorem from \cite[pp. 37--38]{Fi87}:
\begin{theorem}[Spectral expansion] \label{thm:spectral expansion}
Every function $f\in\tilde{D}_k$ has an expansion of the following form:
\begin{equation*}
\begin{split}
f(z)&=\sum_{n\geq0}(\vp_n,f)\vp_n(z)+\frac{1}{4\pi}\sum_{j=1}^{c_{\G}}\sum_{l=1}^{m_j}\int_{-\infty}^{\infty}(E_{jl}(\cdot,1/2+it),f)E_{jl}(z,1/2+it)dt,
\end{split}
\end{equation*}
where $(\vp_n)_{n\geq0}$ is a countable orthonormal system  of eigenfunctions of $\tilde{\Delta}_k:\tilde{D}_k\to \mathcal{H}_k$.
The series $\sum_{n\geq0}(\vp_n,f)\vp_n$ converges uniformly absolutely on compact subsets of $\HH$. 
{When $\G$ is cocompact, the second sum on the right hand side of the above equation is identically zero.}
\end{theorem}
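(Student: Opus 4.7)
The plan is to invoke the abstract spectral theorem for the self-adjoint operator $\tilde\Delta_k$ on $\mathcal{H}_k$ and then identify its projection-valued measure explicitly, splitting into the cocompact and non-cocompact cases. When $\Gamma$ is cocompact, elliptic regularity for the second-order operator $\Delta_k$, combined with the compactness of the Sobolev embedding $H^2(M,V)\hookrightarrow L^2(M,V)=\mathcal{H}_k$, shows that the resolvent $(\tilde\Delta_k+1)^{-1}$ is a compact self-adjoint operator. The Hilbert-Schmidt spectral theorem then produces an orthonormal basis of eigenfunctions $(\vp_n)_{n\geq 0}$ of $\tilde\Delta_k$ with eigenvalues $\lambda_n\to\infty$, and the continuous-spectrum sum is vacuous, as stated.

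In the non-compact case, I would first define, for each cusp $\zeta_j$, the constant-term functional associated to the Fourier expansion of $f\circ A_j^{-1}$ at infinity, and set $\mathcal{H}_k^{\mathrm{cusp}}\subset\mathcal{H}_k$ to be the closed subspace on which all such constant terms vanish. The key technical step is to show that $(\tilde\Delta_k+1)^{-1}$ restricted to $\mathcal{H}_k^{\mathrm{cusp}}$ is compact: cusp forms decay uniformly in each cusp region, so a truncation-plus-Rellich argument (truncating $\mathcal{F}$ at large height $Y$ in each cusp) reduces the problem to elliptic regularity on a compact piece while controlling the $L^2$-tail uniformly in the sequence. This yields a discrete orthonormal system of eigenfunctions inside $\mathcal{H}_k^{\mathrm{cusp}}$, to which I would adjoin the finitely many square-integrable residues of the Eisenstein series at poles in $(1/2,1]$; together these form $(\vp_n)_{n\geq 0}$.

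For the continuous spectrum, I would apply the Maass-Selberg relations for truncated Eisenstein series to represent the projection onto the orthogonal complement of the cuspidal-plus-residual subspace as a contour integral along $\mathrm{Re}(s)=\sigma$ for some $\sigma>1$, then shift the contour to the critical line $\mathrm{Re}(s)=1/2$, picking up exactly the residues that already appear in the discrete part. The resulting integral along $\mathrm{Re}(s)=1/2$ becomes the stated continuous-spectrum expression after the substitution $s=1/2+it$, the factor $1/(4\pi)$ absorbing both the change of variables and the use of the functional equation of $E_{jl}$ to symmetrize the contour.

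The principal obstacle is verifying completeness: that no nonzero element of $\mathcal{H}_k$ is orthogonal to every cusp form, every residue, and every $E_{jl}(\cdot,1/2+it)$. I would handle this by testing against a dense family such as suitably chosen incomplete Poincar\'e series, whose inner products with the candidate basis can be computed via constant-term analysis and the Maass-Selberg orthogonality relations. Once completeness is in place, Parseval's identity delivers the stated expansion; uniform absolute convergence of the discrete sum on compact subsets of $\HH$ then follows from Cauchy-Schwarz applied to $(\vp_n,f)\vp_n(z)$, using the sup-norm bound $\sup_{z\in\mathcal{F}}|\vp_n(z)|\ll|\lambda_n|$ from Section \ref{sec:sup norm}, a Weyl-type upper bound on the eigenvalue counting function, and the fact that $\tilde\Delta_k f\in\mathcal{H}_k$ forces $\sum_n|\lambda_n|^2|(\vp_n,f)|^2<\infty$.
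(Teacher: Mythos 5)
You should note at the outset that the paper does not prove this statement at all: it is recalled verbatim from Fischer \cite[pp.~37--38]{Fi87}, and everything downstream (the pre-trace formula of Lemma \ref{lemma: pre-trace}, the sup-norm bounds of Section \ref{sec:sup norm}, the wave distribution) takes it as input. Your outline is the classical Roelcke--Selberg program that Fischer's book carries out: Rellich compactness in the cocompact case, the cuspidal subspace with vanishing constant terms and a truncation-plus-Rellich compactness argument, adjoining residues of Eisenstein series, Maass--Selberg relations and a contour shift to $\Re(s)=1/2$ for the continuous part, and a completeness argument against incomplete Poincar\'e series. As a roadmap this is the correct (and essentially the only known) route, though it silently assumes the meromorphic continuation and functional equation of $E_{jl}(z,s)$, which is itself a substantial theorem (the paper quotes Roelcke for it) and deserves to be flagged as an input rather than folded into ``shift the contour''.

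The genuine gap is in your last paragraph, the uniform absolute convergence of $\sum_n(\vp_n,f)\vp_n(z)$ on compacta. From $f\in\tilde D_k$ you correctly get $\sum_n\lambda_n^2|(\vp_n,f)|^2\le\|\tilde\Delta_kf\|^2<\infty$, and Cauchy--Schwarz reduces the claim to a locally uniform bound on $\sum_n\lambda_n^{-2}|\vp_n(z)|_V^2$. But the sup-norm bound $|\vp_n(z)|_V\ll|\lambda_n|$ only makes the individual terms of this series $O(1)$, and adding a Weyl-type bound $N(T)\ll T$ does not rescue it: on a dyadic block $T\le\lambda_n<2T$ you get at best $o(1)\cdot T^{1/2}$ with no rate on the $o(1)$, and such tails need not be summable (one can write down admissible coefficient sequences $(\vp_n,f)$ for which the resulting bound diverges). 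What is actually needed is the pointwise Bessel-type inequality $\sum_n\lambda_n^{-2}|\vp_n(z)|_V^2\ll1$ locally uniformly in $z$, which comes from the resolvent kernel $G_s(z,\cdot)$ lying in $\mathcal H_k$ locally uniformly, equivalently from the pre-trace formula \eqref{pre-trace fla}; this is also how the paper itself handles analogous convergence questions in Proposition \ref{prop:wave-dist}. Finally, mind the logical order: the sup-norm bound of Proposition \ref{prop:sup_norm_bound} is derived from the pre-trace formula, which is itself a consequence of the spectral expansion applied to the resolvent kernel, so these tools can only be invoked after the $L^2$-expansion has been established, not as ingredients in its proof.
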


\begin{remark} \label{rem:cocompact setting}
In the sequel, whenever we apply the spectral expansion theorem to cocompact $\G$, we will assume that the sum over parabolic elements is identically zero and we will not treat that case separately.
\end{remark}

Let $|k|(1-|k|)=\lambda_0\leq\lambda_1\leq\lambda_2\leq\dots$ 
denote the discrete eigenvalues corresponding to the orthonormal system $(\vp_n)_{n\geq0}$
and write
\begin{equation}\label{def-t_n}
\lambda_n=1/4 +t_n^2
\end{equation}
for every $n$ where $t_n=\sqrt{\lambda_n-1/4}$ and $t_n\in(0, iA]$ when $\lambda_n <\frac14$; here, $A$ is defined as
\begin{equation}\label{eq:A}
A:=\max\left\{1/2,|k|-1/2\right\},
\end{equation}
 and note that $|k|(1-|k|)\geq\frac14 -A^2$.
Every $\lambda_n$ occurs with finite multiplicity $\mu_n$ and the series $\sum_{n\geq0}\lambda_n^{-2}$ converges \cite[Theorem 1.6.5]{Fi87}.

\section{The automorphic kernel}\label{sec:aut kernel}
In this section, we recall the construction of automorphic forms for $\Gamma$ with multiplier system $\chi$, using point-pair kernel functions (i.e. kernel functions depending only upon the hyperbolic distance between the points).

\subsection{Selberg Harish-Chandra transform}
Following \cite[pp. 386--387]{He83}, let $\Phi$ be a real-valued function defined on $[0,\infty)$, four times differentiable in this interval and such that $|\Phi^{(\ell)}(t)|\ll (t+4)^{-\delta-\ell}$, for $\ell=0,1,2,3,4$ and for some $\delta > \max\{1,|k|\}$.
To the weight $k$ point-pair invariant
\begin{equation}\label{point-pair inv function}
k(z,w):=H_k(z,w)\cdot\Phi\left(\frac{|z-w|^2}{\Im(z) \Im(w)}\right),
\end{equation}
where $z,w\in\mathbb H$, we associate the automorphic kernel
\begin{equation}\label{K Gamma}
K_{\Gamma}(z,w):=\frac{1}{2}\sum_{\gamma\in\widetilde{\Gamma}}\chi(\gamma) J_{\gamma,k}(w)k(z,\gamma w),
\end{equation}
which takes values in the endomorphism ring $\mathrm{End}(V)$. Note that $\Phi\left(\frac{|z-w|^2}{\Im(z) \Im(w)}\right)$ is a weight zero point-pair invariant. Due to the bounds on the derivatives of $\Phi$ and to Lemma \ref{L:point-pair}, the automorphic kernel $K_{\Gamma}$ belongs to $\tilde D_k$ as a function of $z$.

The Selberg Harish-Chandra transform $h_\Phi$
of a function $\Phi$ satisfying the conditions stated above can be computed using the following three steps:
\begin{enumerate}
\item[(i)] compute \label{s/h-ch-transform}
$$Q(y)\:=\:\int_{-\infty}^\infty \Phi(y+v^2)\left(\frac{\sqrt{y+4}+iv}{\sqrt{y+4}-iv}\right)^k~dv\:$$
for $y\geq 0$;
\item[(ii)] set $g(u)= Q\left(2(\cosh u -1)\right)$;
\item[(iii)] the Selberg Harish-Chandra transform of $\Phi$ is the Fourier transform of $g$, i.e.
$$
h_\Phi(r)=\int\limits_{-\infty}^{\infty}g(u)e^{iru}du.
$$
\end{enumerate}
The Selberg Harish-Chandra transform exists for complex numbers $r$ with suitably bounded imaginary part.
\begin{remark}
A slightly different, yet equivalent version of the Selberg Harish-Chandra transform of the point-pair invariant is given in \cite[Theorem 1.5]{Fay77}. In the cited text, the automorphic kernel constructed from the point-pair invariant  is defined as
$$
\tilde{K}_{\Gamma}(z,w)=\sum_{\gamma\in\Gamma}\overline{\chi(\gamma)}\left(\frac{c\overline{w}+d}{cw+d}\right)^k \left(\frac{z-\gamma\overline{w}}{\gamma w-\overline{z}}\right)^k g(\cosh(d_{\hyp}(z,\gamma w))),
$$
under the assumption that $g(u)$ is a continuous function of $u>1$, with a majorant $g_1(u)\in L^1 \cap L^2(1,\infty)$ satisfying the following condition: for any $\delta>0$ there exists a constant $m(\delta)> 0$ such that, for all $z,w\in \mathbb{H}$ with $d_{\hyp}(z,w)>\delta$,
$$
g_1(\cosh(d_{\hyp}(z,w)))\leq m(\delta)\:\cdot\!\!\!\!\!\!\!\int\limits_{d_{\hyp}(\zeta,w)<\delta}g_1(\cosh(d_{\hyp}(\zeta,w)))d\mu_{\hyp}(\zeta).
$$
The  Selberg Harish-Chandra transform $h$  of the point-pair invariant function $g$ is given by the formula
$$
h(r)=2\pi\int\limits_{1}^{\infty}g(\cosh(y))\left(\frac{2}{\cosh y+1}\right)^r F\left(r-k,r+k;1;\frac{\cosh y -1}{\cosh y +1}\right)d(\cosh(y)),
$$
where $F(a,b;c;z)$ stands for the (Gauss) hypergeometric function.

In fact, equation \eqref{rel_cosh_u} yields that $\tilde{K}_{\Gamma}(z,w)=\overline{K_{\Gamma}(z,w)}$, where $K_{\Gamma}(z,w)$ is defined by \eqref{K Gamma} with the point-pair invariant function $\Phi$ in definition \eqref{point-pair inv function} given by $\Phi(x)=g(1+\tfrac{x}{2})$; in particular $h=h_\Phi$.
\end{remark}

For a function $h:D\to\CC$, where $D$ is a subset of $\CC$, and a constant $a>0$ define the following conditions:
\begin{enumerate}
\item[(S1)] $h(r)$ is an even function.
\item[(S2)] $h(r)$ is holomorphic in the strip $|\Im(r)|< a+\epsilon$ for some $\epsilon>0$.
\item[(S3)] $h(r)\ll (1 +\lvert r\rvert)^{-2-\delta}$ for some fixed $\delta>0$ as $\lvert r\rvert\to\infty$ in the set of definition of condition (S2).
\end{enumerate}
Choosing $a=A$ as in \eqref{eq:A},
the conditions (S1)--(S3) are actually the assumptions posed on the test function $h$ in the trace formula \cite[Theorem~6.3]{He83}. The following proposition holds:

\begin{proposition}[\protect{\cite[Section~9.7]{He83}}]\label{prop-autom-kernel-expansion}
 Let $A$ be defined as in \eqref{eq:A} and $\lambda_j=1/4+t_j^2$ as in \eqref{def-t_n}. Suppose that the Selberg Harish-Chandra transform $h_\Phi$ exists and satisfies  conditions (S1)--(S3) for $a=A$.
Then the automorphic kernel \eqref{K Gamma} admits a spectral expansion of the form
\begin{equation}\label{K Gamma spectral}
\begin{split}
K_{\G}(z,w) =&\sum_{\lambda_j\geq|k|(1-|k|)}h_\Phi(t_j)\vp_j(z)\overline{\vp_j}(w)^t\\
&+\frac{1}{4\pi}\sum_{j=1}^{c_{\G}}\sum_{l=1}^{m_j}\int_{-\infty}^{\infty}h_\Phi(r)E_{jl}(z, 1/2 + ir)\overline{E_{jl}}(w, 1/2 + ir)^tdr,
\end{split}
\end{equation}
which converges absolutely and uniformly on compacta.
\end{proposition}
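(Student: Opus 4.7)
The plan is to apply the spectral expansion theorem (Theorem \ref{thm:spectral expansion}) to $z\mapsto K_{\Gamma}(z,w)v$ for an arbitrary fixed $v\in V$ and $w\in\mathbb{H}$, identify the spectral coefficients by unfolding the sum defining $K_{\Gamma}$, and recognize those coefficients as the Selberg Harish-Chandra transform $h_\Phi$ evaluated at the spectral parameter. Once this has been carried out for each $v$, letting $v$ range over an orthonormal basis of $V\cong\CC^d$ recovers the claimed matrix-valued identity \eqref{K Gamma spectral}.

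First I would verify that $K_{\Gamma}(\cdot,w)v\in\tilde{D}_k$. Lemma \ref{L:point-pair} applied to the weight-$k$ point-pair invariant $k(z,w)=H_k(z,w)\Phi(|z-w|^2/(\Im z\,\Im w))$ gives the correct transformation \eqref{chi property}, while the decay hypotheses $|\Phi^{(\ell)}(t)|\ll(t+4)^{-\delta-\ell}$ for $0\leq\ell\leq 4$ with $\delta>\max\{1,|k|\}$ ensure that the series and the one obtained by applying $\Delta_k$ term by term converge absolutely and uniformly on compacta, producing a $C^2$-function with $\Delta_k K_{\Gamma}(\cdot,w)v\in\mathcal{H}_k$. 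Theorem \ref{thm:spectral expansion} then yields an $L^2$-expansion along $(\varphi_n)_{n\geq 0}$ and the Eisenstein series.

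The next step is the unfolding. One writes
\begin{equation*}
(\varphi_n,K_{\Gamma}(\cdot,w)v)=\frac{1}{2}\sum_{\gamma\in\widetilde{\Gamma}}J_{\gamma,k}(w)\int_{\mathcal{F}}\langle\varphi_n(z),\chi(\gamma)k(z,\gamma w)v\rangle\,d\mu_{\hyp}(z),
\end{equation*}
interchanges sum and integral (justified by the decay of $\Phi$) and performs the change of variable $z\mapsto\gamma^{-1}z$ in the $\gamma$-th summand. The transformation rule of $\varphi_n$ under $|[\gamma,k]$ and that of the point-pair invariant $k$, combined with the unitarity of $\chi$ and the cocycle identity \eqref{w property} for $\omega_k$, cause all $\gamma$-dependent factors to cancel and collapse the expression into a single integral over $\mathbb{H}$ (the factor $1/2$ absorbing the double cover $\{\pm I\}$). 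The resulting integral is evaluated by the three-step Selberg Harish-Chandra recipe $\Phi\mapsto Q\mapsto g\mapsto h_\Phi$: after changing to geodesic-polar coordinates centered at $w$ with $|z-w|^2/(\Im z\,\Im w)=2(\cosh u-1)$, the angular integration against $H_k$ produces $Q$, and the eigenfunction property $\Delta_k\varphi_n=(1/4+t_n^2)\varphi_n$ converts the remaining radial integral into the Fourier transform defining $h_\Phi(t_n)$. One obtains
\begin{equation*}
(\varphi_n,K_{\Gamma}(\cdot,w)v)=h_\Phi(t_n)\langle\varphi_n(w),v\rangle=\big(h_\Phi(t_n)\overline{\varphi_n(w)}^t\big)v.
\end{equation*}
Repeating the same manipulation with the meromorphically continued Eisenstein series $E_{jl}(\cdot,1/2+ir)$, which by \eqref{eq:Eis laplacian} are (formal) eigenfunctions with eigenvalue $1/4+r^2$, produces the continuous-spectrum coefficient $h_\Phi(r)\overline{E_{jl}(w,1/2+ir)}^t v$.

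I expect the main obstacle to be twofold. First, the weight-$k$ Selberg Harish-Chandra identity for the vector-valued eigenfunctions $\varphi_n$ requires careful bookkeeping of the phase $H_k(z,w)$ and its interaction with the factor $J_{\gamma,k}$ during the unfolding and the reduction to geodesic-polar coordinates, which is precisely where the three-step recipe and the specific form of the point-pair invariant are essential. Second, absolute and uniform convergence on compacta of both the spectral sum and the Eisenstein integral in \eqref{K Gamma spectral} has to be deduced from conditions (S1)--(S3) with $a=A$, combined with the sup-norm bounds for $\varphi_j$ of Section \ref{sec:sup norm}, a polynomial bound on $E_{jl}$ along the critical line, and the summability $\sum_j\lambda_j^{-2}<\infty$ recalled after \eqref{def-t_n}; the decay $h_\Phi(r)\ll(1+|r|)^{-2-\delta}$ imposed by (S3) is designed exactly to absorb these bounds.
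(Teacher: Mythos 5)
The paper itself offers no proof of this proposition: it is imported verbatim from \cite[Section~9.7]{He83}. Your outline --- check $K_{\Gamma}(\cdot,w)v\in\tilde D_k$, expand it via Theorem \ref{thm:spectral expansion}, unfold the $\widetilde\Gamma$-sum against the integral over $\mathcal F$, and identify the coefficients through the weight-$k$ Selberg Harish-Chandra eigenfunction property --- is exactly the standard route that Hejhal follows, and the algebraic bookkeeping you describe (cancellation of $\chi$, of $J_{\gamma,k}$ and of the factor system via \eqref{w property}, the factor $\tfrac12$ absorbing $\pm I$) does go through.

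The genuine gap is in the convergence step, which is the real content of the statement. The ingredients you invoke --- the sup-norm bound \eqref{sup norm bound varphi}, the summability $\sum_j\lambda_j^{-2}<\infty$, and (S3) with some $\delta>0$ --- do not give absolute convergence of \eqref{K Gamma spectral}: they only yield $|h_\Phi(t_j)|\,|\varphi_j(z)|_V|\varphi_j(w)|_V\ll(1+|t_j|)^{-2-\delta}\lambda_j^{2}\asymp\lambda_j^{1-\delta/2}$, and for $\delta\le 2$ these majorants do not even tend to zero; the same failure occurs for the Eisenstein integral. What is actually needed is a local Weyl-law (Bessel-type) estimate, uniform on compacta, of the shape
\begin{equation*}
\sum_{|t_j|\le T}|\varphi_j(z)|_V^2+\frac{1}{4\pi}\sum_{j=1}^{c_{\Gamma}}\sum_{l=1}^{m_j}\int_{-T}^{T}|E_{jl}(z,1/2+ir)|_V^2\,dr\ \ll\ T^2,
\end{equation*}
combined with Cauchy--Schwarz and dyadic/partial summation; this is precisely what Hejhal establishes, and it does not follow from Section \ref{sec:sup norm} alone. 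The pre-trace formula of Lemma \ref{lemma: pre-trace} gives only $\sum_j(1+|t_j|)^{-4}|\varphi_j(z)|_V^2\ll1$, which suffices when $\delta\ge2$ (this is how the paper argues later, in Proposition \ref{prop:wave-dist}, where $H(r,g)\ll(1+|r|)^{-4}$), but not for the proposition as stated with arbitrary $\delta>0$. Two further points need more than an assertion: the continuous-spectrum coefficient cannot literally be the inner product $(E_{jl}(\cdot,1/2+ir),K_{\Gamma}(\cdot,w)v)$, since $E_{jl}(\cdot,1/2+ir)\notin\mathcal H_k$, so the unfolding there has to be justified directly (using the decay $\delta>\max\{1,|k|\}$ of $\Phi$ against the polynomial growth of the Eisenstein series); and Theorem \ref{thm:spectral expansion} only provides an $L^2$-identity, so passing to the pointwise identity with absolute, locally uniform convergence again rests on the uniform spectral estimates above rather than on the expansion theorem itself.
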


When $\G$ is cocompact, according to Remark \ref{rem:cocompact setting}, the second sum on the right hand side of \eqref{K Gamma spectral} is identically zero.

The assumptions on the test function $h$, which ensure the convergence of the series and the integral on the right-hand side of \eqref{K Gamma spectral}, can be relaxed. Namely, we will prove in Section \ref{Sec:Wave dist} that, if the function $h$ satisfies the conditions (S1),
\begin{enumerate}
\item[(S2$'$)] $h(r)$ is well-defined and even for $r\in\RR \cup [-ia, ia]$,
\end{enumerate}
and (S3) in the set of definition of condition (S2$'$) (that is, as $\lvert r\rvert\to\infty$), then the series and integrals on the right-hand side of \eqref{K Gamma spectral} are well-defined and converge absolutely and uniformly on compacta. However, the assumptions (S1), (S2$'$) and (S3) do not imply that the right-hand side of \eqref{K Gamma spectral} represents a spectral expansion of some $L^2$-automorphic kernel for $a=A$.

\subsection{Resolvent kernel and  pre-trace formula}

Let $\rho(\tilde{\Delta}_k)$ denote the resolvent set of $\tilde{\Delta}_k$, i.e. the set of all complex numbers $\lambda$ for which the linear operator $(\tilde{\Delta}_k -\lambda\mathrm{id}_{\tilde{D}_k})^{-1}:\mathcal{H}_k \rightarrow  \tilde{D}_k$ is bounded. According to \cite[pp. 25--27]{Fi87}, the resolvent kernel associated to the operator $\tilde{\Delta}_k$ is the integral kernel of the operator $(\tilde\Delta_k -s(1-s))^{-1}$, defined for all $s\in\CC\setminus\{k-n,-k-n\,|\,n=0,1,2,...\}$ with $\Re(s)>1$ and $z,w\in\HH$ such that $z\neq\gamma w$ for all $\gamma\in\Gamma$ as the automorphic kernel
\begin{equation}\label{def:resolvent kernel}
G_s(z,w):=\frac{1}{2}\sum_{\gamma\in\widetilde{\Gamma}}\chi(\gamma)k_s(\sigma(z,\gamma w))J_{\gamma,k}(w)H_k(z,\gamma w),
\end{equation}
with the point-pair invariant function
$$
k_s(\sigma):=\sigma^{-s}\frac{\Gamma(s-k)\Gamma(s+k)}{4\pi \Gamma(2s)}F(s+k,s-k;2s;\tfrac{1}{\sigma}),
$$
where $\sigma:=\sigma(z,w)$ is defined by \eqref{def_sigma} and $F(\alpha,\beta;\gamma;z)$ denotes the classical Gauss hypergeometric function.

The series on the right-hand side of \eqref{def:resolvent kernel} converges normally in the variables $z,w\in\HH$ such that $z\neq\gamma w$ and $s\in\CC\setminus\{k-n,-k-n\,|\,n=0,1,2,...\}$ with $\Re(s)>1$  with respect to the operator norm in the ring of endomorphisms of $V$.

Recall from \cite[Formula (2.1.4) on p.~46]{Fi87} the pre-trace formula that follows from the computation of the trace of the resolvent kernel $G_s(z,w)$:

\begin{lemma} \label{lemma: pre-trace} For all $s,t\in\CC\setminus\{ k-n,-k-n\,|\,n=0,1,2,\ldots\}$ with $\Re(t),\Re(s)>1$ and $z\in\HH$, we have
\begin{align}\label{pre-trace fla}
&\sum_{n\geq 0} \left(\frac{1}{\lambda_n - \lambda} - \frac{1}{\lambda_n-\mu}\right)|\vp_n(z)|_V^2+\frac{1}{4\pi}\sum_{j=1}^{c_{\G}}\sum_{l=1}^{m_j}\int\limits_{-\infty}^{\infty}\left(\frac{1}{\tfrac{1}{4}+r^2 - \lambda} - \frac{1}{\tfrac{1}{4}+r^2-\mu}\right)\\
\nonumber &\times |E_{jl}(z,\tfrac{1}{2}+ir)|_V^2 dr\\
\nonumber &=-\frac{d}{4\pi}\left(\psi(s+k)+\psi(s-k)-\psi(t+k)-\psi(t-k)\right)\\
\nonumber &\quad +\frac{1}{2}\sum_{\gamma\in\widetilde{\Gamma}\setminus\{\pm I\}}\mathrm{Tr}(\chi(\gamma))\left( k_s(\sigma(z,\gamma z))- k_t(\sigma(z,\gamma z))\right)J_{\gamma,k}(z)H_k(z,\gamma z),
\end{align}
where $\lambda:=s(1-s)$, $\mu:=t(1-t)$ and $\psi(x):=\frac{\Gamma'(x)}{\Gamma(x)}$ is the digamma function.

Moreover, by Dini’s theorem, all the sums and integrals in \eqref{pre-trace fla} converge uniformly for every $s,t$ as above and $z\in\HH$.
\end{lemma}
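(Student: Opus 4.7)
The plan is to equate two expressions for the trace of the difference $G_s(z,z)-G_t(z,z)$: a spectral one obtained via Theorem \ref{thm:spectral expansion}, and a geometric one obtained from the defining formula \eqref{def:resolvent kernel}.

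On the spectral side, since $G_s(z,w)$ is the integral kernel of the operator $(\tilde\Delta_k-s(1-s))^{-1}$, applying this operator term by term to the spectral decomposition of Theorem \ref{thm:spectral expansion} formally identifies
\begin{equation*}
G_s(z,w)=\sum_{n\geq 0}\frac{\vp_n(z)\overline{\vp_n(w)}^t}{\lambda_n-s(1-s)}+\frac{1}{4\pi}\sum_{j,l}\int_{-\infty}^{\infty}\frac{E_{jl}(z,\tfrac12+ir)\overline{E_{jl}(w,\tfrac12+ir)}^t}{\tfrac14+r^2-s(1-s)}\,dr.
\end{equation*}
The individual series need not converge, but the telescoped difference $\tfrac{1}{\lambda_n-\lambda}-\tfrac{1}{\lambda_n-\mu}=O(\lambda_n^{-2})$ provides the extra decay that, together with the sup-norm bounds from Section \ref{sec:sup norm} and the Weyl-type estimate $\sum_n\lambda_n^{-2}<\infty$ of \cite[Theorem 1.6.5]{Fi87}, controls $G_s-G_t$ on the diagonal. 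Setting $z=w$ and taking the trace converts each rank-one matrix $\vp_n(z)\overline{\vp_n(z)}^t$ into $|\vp_n(z)|_V^2$, producing the left-hand side of \eqref{pre-trace fla}.

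On the geometric side, from \eqref{def:resolvent kernel} one has
\begin{equation*}
G_s(z,w)-G_t(z,w)=\tfrac{1}{2}\sum_{\gamma\in\widetilde\Gamma}\chi(\gamma)\bigl(k_s(\sigma(z,\gamma w))-k_t(\sigma(z,\gamma w))\bigr)J_{\gamma,k}(w)H_k(z,\gamma w).
\end{equation*}
For $\gamma\notin\{\pm I\}$ we have $\sigma(z,\gamma z)>1$, so the summands are continuous at $w=z$, and after tracing (noting $\chi(\gamma)$ is the only matrix-valued factor) they reproduce the geometric sum on the right-hand side of \eqref{pre-trace fla}. The delicate contribution comes from $\gamma=\pm I$, where each $k_s$ has a logarithmic singularity at $\sigma=1$. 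Here I would invoke the asymptotic of the Gauss hypergeometric function in the critical case $c=a+b$: as $\sigma\to 1^+$,
\begin{equation*}
\frac{\Gamma(s-k)\Gamma(s+k)}{\Gamma(2s)}F(s+k,s-k;2s;\tfrac{1}{\sigma})=-\log(1-\tfrac{1}{\sigma})-2\gamma_E-\psi(s+k)-\psi(s-k)+o(1),
\end{equation*}
so that the logarithmic and Euler--Mascheroni terms cancel in $k_s-k_t$ and
\begin{equation*}
\lim_{\sigma\to 1^+}(k_s(\sigma)-k_t(\sigma))=\tfrac{1}{4\pi}(\psi(t+k)+\psi(t-k)-\psi(s+k)-\psi(s-k)).
\end{equation*}
Since $\chi(\pm I)J_{\pm I,k}(z)H_k(z,z)=\mathrm{id}_V$ for each choice of sign, summing the $\pm I$ contributions and taking the trace yields precisely the digamma term of \eqref{pre-trace fla}.

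The main obstacle is justifying the limit $w\to z$ rigorously: on both sides, the individual logarithmic singularities at $w=z$ must be shown to cancel exactly, and the remaining geometric sum over $\gamma\notin\{\pm I\}$ must be controlled by the standard decay $k_s(\sigma)\sim C_s\sigma^{-s}$ for large $\sigma$ together with the $O(X)$-growth of lattice points in hyperbolic disks. Once pointwise equality is established, Dini's theorem applies to the non-negative partial sums obtained after replacing each summand by its absolute value—their common limit being manifestly continuous in $z$—thereby promoting pointwise to uniform convergence as claimed.
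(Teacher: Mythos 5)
First, note that the paper itself gives no proof of this lemma: it is quoted directly from \cite[Formula (2.1.4), p.~46]{Fi87}, so what you are really attempting is a reconstruction of Fischer's argument. Your geometric side is essentially correct: the only singular contribution on the diagonal comes from terms with $\sigma(z,\gamma z)=1$, the $c=a+b$ asymptotics of the hypergeometric function gives $k_s(\sigma)-k_t(\sigma)\to\frac{1}{4\pi}\bigl(\psi(t+k)+\psi(t-k)-\psi(s+k)-\psi(s-k)\bigr)$ as $\sigma\to1^+$, and since $\chi(\pm I)J_{\pm I,k}(z)H_k(z,z)=\mathrm{id}_V$, the two central elements together with the prefactor $\tfrac12$ produce exactly the digamma term with the constant $-\frac{d}{4\pi}$. (One small inaccuracy: your claim that $\sigma(z,\gamma z)>1$ for all $\gamma\neq\pm I$ fails when $z$ is an elliptic fixed point; the formula still makes sense there only because $k_s-k_t$ extends continuously to $\sigma=1$, which is worth saying explicitly since the lemma is asserted for all $z\in\HH$.)

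The genuine gap is on the spectral side. You propose to control the diagonal expansion of $G_s-G_t$ using ``the sup-norm bounds from Section \ref{sec:sup norm}'' together with $\sum_n\lambda_n^{-2}<\infty$. This is circular: Proposition \ref{prop:sup_norm_bound} is proved in the paper by specializing the very identity \eqref{pre-trace fla} you are trying to establish, so it cannot be used as an input here. Moreover, even granting $\sup_z|\vp_n(z)|_V\ll|\lambda_n|$, your termwise estimate gives $O(\lambda_n^{-2})\cdot O(\lambda_n^{2})=O(1)$ per term, which does not yield convergence of the discrete sum, and it says nothing about the Eisenstein integral. The pointwise validity of the spectral expansion of the resolvent difference on the diagonal is precisely the hard core of the lemma and must be obtained without a priori sup-norm bounds --- for instance, as in Fischer and Elstrodt, by first taking real $s>t>1$ (so that every spectral term $\bigl(\frac{1}{\lambda_n-\lambda}-\frac{1}{\lambda_n-\mu}\bigr)|\vp_n(z)|_V^2$ and the corresponding integrand have a fixed sign), exploiting the $L^2$/Hilbert--Schmidt nature of the resolvent difference and a Mercer--Dini type argument to pass from $L^2$ to pointwise and then uniform convergence towards the continuous geometric side, and finally extending to complex $s,t$ by analytic continuation in both variables. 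Relatedly, your closing appeal to Dini's theorem is misstated: replacing each summand by its absolute value changes the limit function, which is then not ``manifestly continuous''; Dini must be applied to the monotone partial sums one already has for suitably ordered real $s,t$, whose limit is identified with the continuous right-hand side of \eqref{pre-trace fla}.
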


When $\G$ is cocompact, the sum over cusps on the left hand side of \eqref{pre-trace fla} is identically zero.

\section{Sup-norm bounds for the eigenfunctions associated to discrete eigenvalues}\label{sec:sup norm}

In this section, we use \eqref{pre-trace fla} to derive the sup-norm bounds for the norm $\lvert\vp_n(z)\rvert_V$, when $z\in \mathcal F$. Hence, among others, we need an upper bound for the absolute value of the difference
$g_k(s;z,\gamma z):=k_s(\sigma(z,\gamma z))- k_{s+1}(\sigma(z,\gamma z))$
analogous to the bound derived in  \cite[Lemma~6.2]{FJK19}.  The proof of \cite[Lemma~6.2]{FJK19} could be adopted to our setting when the weight $k$ is not a positive integer or a half-integer. However, we give a direct proof of a better bound, valid for all real weights $k$.
\begin{lemma}
Let $k\in \mathbb{R}$ and let $s>|k|$ be a real number. Then
\begin{equation}\label{k-difference}
|g_k(s; z,\gamma z)|\leq \frac{s}{2\pi(s^2-k^2)}\sigma(z,\gamma z)^{-s} .
\end{equation}
\end{lemma}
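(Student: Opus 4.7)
The plan is to derive a single hypergeometric-integral representation for $g_k(s;z,\gamma z)=k_s(\sigma)-k_{s+1}(\sigma)$ that makes the estimate almost automatic; here $\sigma:=\sigma(z,\gamma z)\geq 1$. First I would prove the identity
\[
k_s(\sigma)-k_{s+1}(\sigma)\;=\;\sigma^{-s}\,\frac{\Gamma(s-k)\Gamma(s+k)}{4\pi\,\Gamma(2s)}\,F\!\left(s+k,\,s-k;\,2s+1;\,\tfrac{1}{\sigma}\right).
\]
The cleanest route is the elementary contiguous relation
\[
F(a,b;c;z)-F(a,b;c+1;z)\;=\;\frac{ab\,z}{c(c+1)}\,F(a+1,b+1;c+2;z),
\]
which follows at once by comparing power-series coefficients. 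Apply it with $a=s+k$, $b=s-k$, $c=2s$, $z=1/\sigma$ and multiply both sides by $\sigma^{-s}\Gamma(s-k)\Gamma(s+k)/(4\pi\Gamma(2s))$. The first term on the left becomes $k_s(\sigma)$; using $(s\pm k)\Gamma(s\pm k)=\Gamma(s\pm k+1)$ and $2s(2s+1)\Gamma(2s)=\Gamma(2s+2)$, the right-hand side becomes exactly $k_{s+1}(\sigma)$, and rearranging gives the displayed identity.

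Next I would rewrite $F(s+k,s-k;2s+1;1/\sigma)$ as an integral by Euler's formula
\[
F(a,b;c;z)=\frac{\Gamma(c)}{\Gamma(b)\Gamma(c-b)}\int_0^1 t^{b-1}(1-t)^{c-b-1}(1-zt)^{-a}\,dt,
\]
which applies since the hypothesis $s>|k|$ forces $s-k>0$ and $c-b=s+k+1>0$. With $a=s+k$, $b=s-k$, $c=2s+1$, the prefactors telescope via $\Gamma(2s+1)=2s\Gamma(2s)$ and $\Gamma(s+k+1)=(s+k)\Gamma(s+k)$, and one obtains
\[
k_s(\sigma)-k_{s+1}(\sigma)\;=\;\frac{s\,\sigma^{-s}}{2\pi(s+k)}\int_0^1 t^{s-k-1}(1-t)^{s+k}\left(1-\tfrac{t}{\sigma}\right)^{-(s+k)}dt.
\]

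With this representation in hand the estimate becomes a one-line computation. Since $\sigma\geq 1$ and $t\in[0,1]$ we have $1-t/\sigma\geq 1-t\geq 0$, and because $s+k>0$ this yields $(1-t/\sigma)^{-(s+k)}\leq (1-t)^{-(s+k)}$. The factor $(1-t)^{-(s+k)}$ cancels $(1-t)^{s+k}$ in the integrand, leaving $\int_0^1 t^{s-k-1}\,dt=1/(s-k)$, and one arrives at
\[
|g_k(s;z,\gamma z)|\;\leq\;\frac{s\,\sigma^{-s}}{2\pi(s+k)(s-k)}\;=\;\frac{s}{2\pi(s^2-k^2)}\,\sigma(z,\gamma z)^{-s}.
\]
Nonnegativity of the integrand together with $s>|k|$ also gives $g_k\geq 0$, so the absolute value is automatic. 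The only nontrivial step is the hypergeometric identity; the rest is routine.
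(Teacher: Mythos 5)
Your proof is correct and takes essentially the same route as the paper: both reduce the difference to the single hypergeometric identity $g_k(s;z,\gamma z)=\sigma^{-s}\tfrac{\Gamma(s-k)\Gamma(s+k)}{4\pi\Gamma(2s)}F\bigl(s+k,s-k;2s+1;\tfrac{1}{\sigma}\bigr)$ (the paper derives it by the same Pochhammer-coefficient manipulation that proves your contiguous relation) and then bound by the value at $\sigma=1$. The only cosmetic difference is the last step: the paper bounds the series termwise via $\sigma^{-j}\le 1$ and applies Gauss's summation theorem, whereas you pass through Euler's integral representation and estimate the integrand pointwise; both yield exactly $\tfrac{s}{2\pi(s^2-k^2)}\sigma(z,\gamma z)^{-s}$.
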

\begin{proof}
From the definition of hypergeometric series in terms of the Pochammer symbol $(a)_j:=\Gamma(a+j)/\Gamma(a)$ and the identity $(a+1)_j=(a)_{j+1}/a$, we obtain that
$$
k_{s+1}(\sigma(z,\gamma z))= \sigma(z,\gamma z)^{-s}\frac{\Gamma(s-k)\Gamma(s+k)}{4\pi \Gamma(2s+1)}\sum_{j=0}^{\infty}\frac{(s+k)_{j+1}(s-k)_{j+1} (j+1)}{(j+1)!(2s+1)_{j+1}}\sigma(z,\gamma z)^{-(j+1)},
$$
so that
$$
g_k(s; z,\gamma z)=\sigma(z,\gamma z)^{-s}\frac{\Gamma(s-k)\Gamma(s+k)}{4\pi \Gamma(2s)} \sum_{j=0}^{\infty}\frac{(s+k)_{j}(s-k)_{j}}{j!(2s+1)_{j}}\sigma(z,\gamma z)^{-j}.
$$
Since $\sigma(z,\gamma z)\geq 1$, application of \cite[Formula~9.122.1]{GR07} gives
$$
\sum_{j=0}^{\infty}\frac{(s+k)_{j}(s-k)_{j}}{j!(2s+1)_{j}}\sigma(z,\gamma z)^{-j}\leq \sum_{j=0}^{\infty}\frac{(s+k)_{j}(s-k)_{j}}{j!(2s+1)_{j}}=\frac{\Gamma(2s+1)}{\Gamma(s-k+1)\Gamma(s+k+1)},
$$
which leads to
$$
g_k(s; z,\gamma z)\leq \frac{2s}{4\pi (s-k)(s+k)}\sigma(z,\gamma z)^{-s}.
$$
The proof is complete; note that we have omitted the absolute values because all expressions are positive, due to the fact that $s>|k|$ is real.
\end{proof}

\begin{remark}\rm
In the case where $s=k+\epsilon$, for some $\epsilon\in(0,1)$ and some positive integer $k$, the upper bound from \eqref{k-difference} becomes $\frac{k+\epsilon}{2\pi\epsilon(2k+\epsilon)}\sigma(z,\gamma z)^{-s}$. This is obviously less than $\frac{3}{2\pi \epsilon}\sigma(z,\gamma z)^{-s}$ for all positive integers $k$, hence the bound \eqref{k-difference} is better than the one obtained in \cite[Lemma~6.2]{FJK19} using the representation of the resolvent kernel as an integral transform of the heat kernel.
\end{remark}

Next, we derive the sup-norm bound for the eigenfunctions of the weighted Laplacian associated to discrete eigenvalues $\lambda_j$, $j\geq 0$, and for the integral of the Eisenstein series, when $M$ in non-compact. Throughout this section, identify the surface $M$ with the fundamental domain $\mathcal{F}$. Let $Y>1$ be arbitrary and let $\mathcal{F}_j^{Y}$ denote the neighbourhood of the cusp $\zeta_j$, $j\in\{1,\ldots,c_\Gamma\}$, characterized by
$$
A_j^{-1}\mathcal{F}_j^{Y}=\{z\in\HH\,|\,-1/2\leq \Re(z)\leq 1/2, \Im(z)\geq Y\},
$$
where $A_j$ is the scaling matrix associated to the cusp $\zeta_j$, for every $j\in\{1,\ldots,c_\Gamma\}$. Denote by $\mathcal{F}_Y$ the closure of the complement of $\bigcup\limits_{j=1}^{c_\Gamma}\mathcal{F}_j^{Y} $ with respect to $\mathcal{F}$ (note that $\mathcal{F}= \mathcal{F}_Y$ if $\Gamma$ is cocompact).

We introduce the constant
\begin{equation} \label{constant in sup norm bound}
C(k,M,d):=\frac{d(|k|+2)}{8\pi(|k|+1)}+\left( \frac{|k|+2}{|k|+1}\right)^2 \frac{d}{2\mathrm{vol}_{\mathrm{hyp}}(\mathcal{F})} e^{\frac32\mathrm{diam}_{\mathrm{hyp}}(\mathcal{F})},
\end{equation}
where $\mathrm{diam}_{\mathrm{hyp}}(\mathcal{F})$ denotes the hyperbolic diameter of the fundamental domain $\mathcal{F}$. The constant $C(k,M,d)$ clearly depends upon the surface and the multiplier system, but not on the eigenvalue. With this notation, the following proposition holds:

\begin{proposition} \label{prop:sup_norm_bound}
$(a)$	Let $\vp_j(z)$ be the eigenfunction of the Laplacian $\Delta_k$ associated to the discrete eigenvalue $\lambda_j$. Then
\begin{equation} \label{sup norm bound varphi}
\sup_{z\in \mathcal{F}}|\vp_j (z)|_V \leq \mathcal{C}(k,M,d) |\lambda_j|,
\end{equation}
where the constant $\mathcal{C}(k,M,d)$ depends  on the surface and the multiplier system,
but not on the eigenvalue. When $\lambda_j\geq 3+|k|$, one can take
$$\mathcal{C}(k,M,d)=\left(C(k,M,d)(|k|+2)\right)^{\frac{1}{2}}.$$
$(b)$ In case when $\widetilde\G$ contains parabolic elements, for any $j\in\{ 1,\ldots ,c_{\G}\}$ and $l\in\{1,\ldots ,m_j\}$, the following bound for the parabolic Eisenstein series \eqref{def:Eis series} of weight $k$ for the cusp $\zeta_j$, the multiplier system $\chi$ and the eigenvector $v_{jl}$ holds:
\begin{equation} \label{sup norm bound Eisenstein ser}
\sup_{z\in\mathcal{F}}\int\limits_{-\infty}^{\infty}
\frac{1}{(\frac{1}{4}+r^2 +(|k|+2)^2)^2 -(|k|+2)^2}  |E_{jl}(z,\tfrac{1}{2}+ir)|_V^2 dr\leq \frac{2\pi}{|k|+2} C(k,M,d).
\end{equation}
\end{proposition}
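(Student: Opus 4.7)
My plan is to apply the pre-trace formula of Lemma \ref{lemma: pre-trace} with the specific pair $s=|k|+3$, $t=|k|+2$, and then extract both bounds by exploiting the positivity of every term on the spectral side. Writing $\alpha:=|k|+2$, this choice is dictated by two observations: first, it makes the difference $\tfrac{1}{\lambda_n-s(1-s)}-\tfrac{1}{\lambda_n-t(1-t)}$ equal to the clean expression $\tfrac{2\alpha}{(\lambda_n+\alpha^2)^2-\alpha^2}$, whose denominator is precisely the one appearing in part (b); and second, it places $k_s-k_t=-g_k(\alpha;\,\cdot\,)$ in the form to which the sharp bound \eqref{k-difference} applies. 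Since $\lambda_n\ge|k|(1-|k|)$ and $\alpha\ge2$, the quantity $(\lambda_n+\alpha^2)^2-\alpha^2$ is strictly positive for every $n$, so every term on the spectral side of \eqref{pre-trace fla} is non-negative; this is the structural gain that allows isolating a single summand later.

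Next, I would show that the right-hand side of \eqref{pre-trace fla} is bounded in absolute value by $C(k,M,d)$ uniformly in $z\in\mathcal{F}$. The digamma contribution collapses via $\psi(x+1)-\psi(x)=1/x$ to exactly $-\tfrac{d(|k|+2)}{8\pi(|k|+1)}$, reproducing the first summand of $C(k,M,d)$. For the group sum, combining $|\mathrm{Tr}(\chi(\gamma))|\le d$, $|J_{\gamma,k}|=|H_k|=1$, and \eqref{k-difference} at $s=\alpha$ controls the geometric side by a constant multiple of $\sum_{\gamma\neq\pm I}\sigma(z,\gamma z)^{-(|k|+2)}$. I would then estimate this Poincar\'e-type sum using the trivial reduction $\sigma^{-(|k|+2)}\le\sigma^{-3/2}$ (valid since $\sigma\ge1$), the elementary comparison $\sigma(z,\gamma z)\ge e^{-\mathrm{diam}_{\mathrm{hyp}}(\mathcal{F})}\sigma(z,w)$ for $w\in\gamma\mathcal{F}$ (a consequence of $\cosh\tfrac{r-D}{2}\ge e^{-D/2}\cosh\tfrac{r}{2}$), a tiling integration over $\bigsqcup_\gamma\gamma\mathcal{F}=\HH$, and the identity $\int_\HH\sigma(z,w)^{-3/2}\,d\mu_{\mathrm{hyp}}=8\pi$. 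This reproduces the second summand of $C(k,M,d)$ with its factor $e^{(3/2)\mathrm{diam}_{\mathrm{hyp}}(\mathcal{F})}$.

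With the uniform bound $C(k,M,d)$ on the right-hand side of \eqref{pre-trace fla} in hand, part (b) is immediate: drop every spectral-side term except the $(j,l)$-integral, observe that all other terms are non-negative, and clear the factor $\tfrac{1}{4\pi}\cdot 2\alpha$. For part (a), dropping every spectral term except the $n=j$ summand gives $|\vp_j(z)|_V^2\le\tfrac{(\lambda_j+\alpha^2)^2-\alpha^2}{2\alpha}\,C(k,M,d)$. A short polynomial argument then shows $(\lambda_j+\alpha^2)^2-\alpha^2\le2\alpha^2\lambda_j^2$ for $\lambda_j\ge1+\alpha=3+|k|$: the quadratic $(2\alpha^2-1)\lambda_j^2-2\alpha^2\lambda_j-\alpha^2(\alpha^2-1)$ in $\lambda_j$ equals $(\alpha-1)(\alpha+1)^3\ge0$ at the boundary $\lambda_j=1+\alpha$ and attains its minimum at $\lambda_j=\alpha^2/(2\alpha^2-1)<1+\alpha$, so it is non-negative for all $\lambda_j\ge1+\alpha$. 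This yields $\mathcal{C}(k,M,d)=\sqrt{(|k|+2)\,C(k,M,d)}$ as claimed, and the finitely many eigenvalues with $\lambda_j<3+|k|$ contribute a bounded correction absorbed into a larger (but still surface- and multiplier-dependent) constant.

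The step I anticipate as the main obstacle is the Poincar\'e-sum estimate. The naive tiling bound produces a $k$-dependent exponent $e^{(|k|+2)\mathrm{diam}_{\mathrm{hyp}}(\mathcal{F})}$, which is too weak; the uniform exponent $\tfrac32\mathrm{diam}_{\mathrm{hyp}}(\mathcal{F})$ appearing in $C(k,M,d)$ is only reached by first reducing to the fixed power $\sigma^{-3/2}$ before invoking the volume comparison. Matching the precise constant $\bigl(\tfrac{|k|+2}{|k|+1}\bigr)^2\tfrac{d}{2\mathrm{vol}_{\mathrm{hyp}}(\mathcal{F})}$ in the second summand of $C(k,M,d)$ will further require careful bookkeeping on the two-to-one cover $\widetilde\G\to\G$ and on the contribution of the identity element to the geometric sum.
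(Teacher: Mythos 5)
Your skeleton coincides with the paper's own proof: specialize Lemma~\ref{lemma: pre-trace} to the parameter pair $\{|k|+2,|k|+3\}$, evaluate the digamma terms exactly, control the group sum through \eqref{k-difference}, use the fixed sign of the spectral terms to isolate a single eigenfunction summand (for part (a)) or a single Eisenstein integral (for part (b)), and close part (a) with the inequality $(\lambda_j+\alpha^2)^2-\alpha^2\le 2\alpha^2\lambda_j^2$ for $\lambda_j\ge 3+|k|$; your boundary value $(\alpha-1)(\alpha+1)^3$ is correct. One labelling slip: with your ordering $s=|k|+3$, $t=|k|+2$ the spectral coefficients equal $-2\alpha/\bigl((\lambda_n+\alpha^2)^2-\alpha^2\bigr)<0$, so the positivity you invoke requires $s=|k|+2$, $t=|k|+3$ (as in the paper) or a global sign flip; this is cosmetic.

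The genuine gap is the step where you claim the right-hand side of \eqref{pre-trace fla} is bounded by $C(k,M,d)$ uniformly in $z\in\mathcal F$, via the comparison $\sigma(z,\gamma z)\ge e^{-\mathrm{diam}_{\mathrm{hyp}}(\mathcal F)}\sigma(z,w)$ and unfolding. This only makes sense for cocompact $\Gamma$: when $\widetilde\Gamma$ contains parabolic elements, $\mathrm{diam}_{\mathrm{hyp}}(\mathcal F)=\infty$, so your inequality is empty, and in fact no uniform bound over $\mathcal F$ can exist, since for $z$ at height $y$ in a cusp of width one the parabolic translates give $\sigma(z,z+n)=1+n^2/(4y^2)$, whence $\sum_{\gamma\ne I}\sigma(z,\gamma z)^{-(|k|+2)}\gg y$ (consistently, the Eisenstein integral on the spectral side grows linearly in the height). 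The paper therefore bounds the geometric side only for $z$ in the truncated domains $\mathcal F_Y$ — this is what \cite[Lemma~3.7]{FJK19} supplies, with a constant $B_Y$ built from the finite quantity $\mathrm{diam}_{\mathrm{hyp}}(\mathcal F_Y)$ — and then transfers the bound on the isolated spectral term from $\mathcal F_Y$ to all of $\mathcal F$ by expressing the sup-norm as a limit of $L^p$-norms and applying monotone convergence along an exhaustion $\mathcal F_{Y_n}$. Your proposal has neither the truncation nor this passage to the full fundamental domain, so as written it proves part (a) only for cocompact $\Gamma$ and never reaches part (b), which concerns precisely the cusped case. A secondary point: even where your tiling estimate applies, reducing to the exponent $\tfrac{3}{2}$ and using $\int_{\HH}\sigma^{-3/2}d\mu_{\hyp}=8\pi$ gives a bound for the Poincar\'e sum that exceeds the one the paper imports from \cite[Lemma~3.7]{FJK19} by the factor $2(|k|+1)/(|k|+2)$, so the specific constants $C(k,M,d)$ in \eqref{constant in sup norm bound} and $\mathcal C(k,M,d)$ in \eqref{sup norm bound varphi} would not be matched; the loss lies there, not in the cover bookkeeping you single out.
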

\begin{proof}

Take $s=|k|+2$ and $t=|k|+3$ in Lemma \ref{lemma: pre-trace} (note that $s,t\notin\{k-n,-k-n\,|\,n=0,1,2,\ldots\}$). Start with an upper bound for the right-hand side of \eqref{pre-trace fla}. The sum of the values of digamma functions may be evaluated by applying the functional equation $\psi(z+1)=\psi(z)+z^{-1}$:
$$
\frac{d}{4\pi}\left|\psi(s+k)+\psi(s-k)-\psi(t+k)-\psi(t-k)\right| = \frac{d(|k|+2)}{8\pi(|k|+1)}.
$$
To bound the sum, use inequality \eqref{k-difference}. Recall that $|J_{\gamma,k}(z)H_k(z,\gamma z)|=1$ for all $z$ and $\gamma$ and that $\chi$ is unitary, so that
\begin{equation*}
\begin{split}
&\bigg|\frac{1}{2}\sum_{\gamma\in\widetilde{\Gamma}\setminus\{\pm I\}}\mathrm{Tr}(\chi(\gamma))\left( k_s(\sigma(z,\gamma z))- k_t(\sigma(z,\gamma z))\right)J_{\gamma,k}(z)H_k(z,\gamma z)\bigg|\\
&\leq \sum_{\gamma\in\Gamma\setminus\{I\}}\frac{d(|k|+2)}{8\pi(|k|+1)}\sigma(z,\gamma z)^{-(|k|+2)} .
\end{split}
\end{equation*}
Furthermore, applying  \cite[Lemma~3.7]{FJK19} with $\delta=|k|+2$, we deduce that, for any $Y>1$ and any $z\in \mathcal{F}_Y$,
$$
\sum_{\gamma\in\Gamma\setminus\{I\}}\frac{d(|k|+2)}{8\pi(|k|+1)}\sigma(z,\gamma z)^{-(|k|+2)}\leq \left( \frac{|k|+2}{|k|+1}\right)^2 \frac{dB_{Y}}{2},
$$
where $B_Y=\exp\bigl(\frac{3}{2}\mathrm{diam}_{\mathrm{hyp}}(\mathcal{F}_Y)\bigr)\mathrm{vol}_{\mathrm{hyp}}(\mathcal{F}_Y)^{-1}$. Note that, for every $Y\geq 2$, $B_Y$ is bounded by $\exp\bigl(\frac{3}{2}\mathrm{diam}_{\mathrm{hyp}}(\mathcal{F})\bigr)\mathrm{vol}_{\mathrm{hyp}}(\mathcal{F})^{-1}$. Hence, for all $z\in \mathcal{F}_Y$ and $Y\geq 2$, the right-hand side of \eqref{pre-trace fla} is bounded from above by the constant $C(k,M,d)$ defined in \eqref{constant in sup norm bound}.
\vskip .06in
Now, specialize the pre-trace formula \eqref{pre-trace fla} to either one summand or one integral on the left-hand side.
\vskip .06in
$(a)$ Since there are only finitely many eigenvalues that are less than $3+|k|$, it is sufficient to prove \eqref{sup norm bound varphi} for eigenvalues $\lambda_j\geq 3+|k|$. Therefore, assume that $\lambda_j\geq 3+|k|$.

Our choice of $s$ and $t$ in Lemma \ref{lemma: pre-trace}, together with above computations and the assumption on $\lambda_j$, lead to the inequality
$$\sup_{ z\in\mathcal{F}_Y} |\vp_j(z)|_V^2 \leq C(k,M,d)(|k|+2) \lambda_j^2 ,$$
which holds for all $Y\geq 2$. It remains to extend it to $z\in \mathcal{F}$.

Since all eigenfunctions $\vp_j$ are continuous on $\overline{\mathcal{F}}$ and the area of $\mathcal{F}$ is finite (with the area of the boundary equal to zero, since $\Gamma$ is of the first kind), one deduces that $(z\mapsto |\vp_j(z)|_V)\in L^p(\mathcal{F})$ for all $p\geq 1$ and, more importantly, that
$$
\sup_{z\in \mathcal{F}}|\vp_j (z)|_V = \lim_{p\to \infty} \mu(\mathcal{F})^{-1/p}\| |\vp_j (z)|_V\|_p =\lim_{p\to \infty}\left( \mu(\mathcal{F})^{-1}\int\limits_{\mathcal{F}} |\vp_j(z)|_V^pd\mu_{\hyp}(z)\right)^{1/p},
$$
where $\|\cdot\|_p$ denotes the $L^p$-norm (see e.g. \cite[Formula~(22) on p. 100]{Ch84} for the analogous statement related to eigenfunctions of the Laplacian).

Let $\{Y_n\}_{n\geq 1}$ be an increasing sequence of real numbers bigger than $2$, tending to infinity. For every $p>1$, the monotone convergence theorem applied to the sequence $|\vp_j(z)|^p \cdot 1_{\mathcal{F}_{Y_n}}(z)$, where $1_{\mathcal{F}_{Y_n}}(z)$ denotes the characteristic function of the set $\mathcal{F}_{Y_n}$, yields that
$$
\int\limits_{\mathcal{F}} |\vp_j(z)|_V^pd\mu_{\hyp}(z)=\lim_{n\to\infty} \int\limits_{\mathcal{F}_{Y_n}} |\vp_j(z)|_V^pd\mu_{\hyp}(z)\leq C(k,M,d)^{p/2}(|k|+2)^{p/2} |\lambda_j|^p\mu(\mathcal{F}) .
$$
Therefore,
$$\sup_{z\in \mathcal{F}}|\vp_j (z)|_V \leq C(k,M,d)^{1/2}(|k|+2)^{1/2} |\lambda_j| .$$
\vskip .06in
$(b)$ 
{If $M$ has cusps, fix $j\in\{ 1,\ldots ,c_{\G}\}$ and $l\in\{1,\ldots ,m_j\}$}. The above computations imply that, for $s=|k|+2$ and for all  $Y\geq 2$,
$$
\sup_{z\in\mathcal{F}_Y}\int\limits_{-\infty}^{\infty}
\frac{1}{(\frac{1}{4}+r^2 +s^2)^2 -s^2}  |E_{jl}(z,\tfrac{1}{2}+ir)|_V^2 dr\leq \frac{2\pi}{|k|+2} C(k,M,d).
$$
Proceeding analogously as above, define the function
$$G(z):=\int_{-\infty}^{\infty}
\frac{1}{(\frac{1}{4}+r^2 +s^2)^2 -s^2}  |E_{jl}(z,\tfrac{1}{2}+ir)|_V^2 dr,$$
which is continuous and non-negative on $\overline{\mathcal{F}}$. Applying the monotone convergence theorem to the sequence $G(z)^p \cdot 1_{\mathcal{F}_{Y_n}}(z)$, together with the fact that the sup-norm is the limit of $L^p-$norms, and reasoning as in the proof of part (a), we deduce that
$$
\sup_{z\in \mathcal{F}}\int\limits_{-\infty}^{\infty}
\frac{1}{(\frac{1}{4}+r^2 +s^2)^2 -s^2}  |E_{jl}(z,\tfrac{1}{2}+ir)|_V^2 dr\leq \frac{2\pi}{|k|+2} C(k,M,d).
$$
This completes the proof.
\end{proof}	
	
\section{The wave distribution associated to the weighted Laplacian}\label{Sec:Wave dist}

\subsection{The heat and Poisson kernel}\label{Sec:heat kernel}
In this section, we define the Poisson kernel for the weighted Laplacian $\Delta_k$ via the heat kernel.
For any $t>0$ and $\rho\geq 0$, define the heat kernel

\begin{equation}\label{heat kernel on H}
K_{\mathrm{heat}}(t;\rho):= \frac{\sqrt{2}e^{-t/4}}{(4\pi t)^{3/2}}\int_{\rho}^{\infty}\frac{re^{-r^2/4t}}{\sqrt{\cosh(r)-\cosh(\rho)}}
\mathcal T_{2k}\left(\frac{\cosh(r/2)}{\cosh(\rho/2)}\right)dr,
\end{equation}
where
$$
\mathcal T_{2k}(x)= \frac{1}{2}\left[ (x+\sqrt{x^2-1})^{2k} + (x-\sqrt{x^2-1})^{2k}\right],
$$
for any real $k$. Here the $k$-th powers are chosen as in Lemma~\ref{lemma-H_k-definition}. Note that, for $k\in\frac{1}{2}\mathbb Z$, the function $\mathcal T_{2k}(x)$ coincides with the $2k$-th Chebyshev polynomial.
The hyperbolic heat kernel on $\mathbb H$ is defined by
$$K_{\mathbb H}(t;z,w):=K_{\mathrm{heat}}(t;d_{\hyp}(z,w))~~~~~~~~~(z,w\in \mathbb H).$$
For any $t>0$ and $k\in \mathbb R$, the same argument as in \cite[p. 136]{FJK16} shows that the heat kernel $K_{\mathbb H}(t;\rho)$ is strictly monotonic decreasing with respect to $\rho>0$.
In the spirit of \cite[p. 157]{Fay77}, the hyperbolic heat kernel on $M$ associated to $\Delta_k$ is defined as
\begin{equation}\label{heat kernal on M}
K_{\hyp}(t;z,w):=\frac{1}{2}\sum_{\gamma\in \widetilde\Gamma} \overline{\chi(\gamma)}\left(\frac{c\overline{w}+d}{cw+d}\right)^k\left(\frac{z-\gamma\overline{w}}{\gamma w-\overline{z}}\right)^k K_{\mathbb H}(t;z,\gamma w)~~~~~~~~~(z,w\in \mathcal F).
\end{equation}

\begin{lemma}\label{heatklemma}
	For any $k \in \mathbb R$ and $t>0$, $K_{\hyp}(t;z,w)$ converges absolutely and uniformly on any compact subset of $\mathcal F\times \mathcal F$.
\end{lemma}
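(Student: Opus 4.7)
The strategy is to reduce the convergence of the series defining $K_{\hyp}(t;z,w)$ to the convergence of $\sum_{\gamma \in \widetilde\Gamma} K_{\mathrm{heat}}(t; d_{\hyp}(z,\gamma w))$, and then exploit the super-exponential decay of the heat kernel in $\rho$ against the at most exponential growth of lattice point counts in hyperbolic balls.

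First I would check that every factor in each summand besides $K_{\mathbb H}(t;z,\gamma w)$ has operator norm at most one. Indeed $\chi(\gamma) \in U(V)$, while $\left|c\overline{w}+d\right|=\left|cw+d\right|$ and $\left|z-\gamma\overline{w}\right|=\left|\overline{z}-\gamma w\right|=\left|\gamma w-\overline{z}\right|$ show that both ratios have modulus one; since the $k$-th powers are taken in the principal branch of Lemma~\ref{lemma-H_k-definition}, which preserves modulus, we obtain the term-wise bound
\begin{equation*}
\left\|\overline{\chi(\gamma)}\Bigl(\tfrac{c\overline{w}+d}{cw+d}\Bigr)^k\Bigl(\tfrac{z-\gamma\overline{w}}{\gamma w-\overline{z}}\Bigr)^k K_{\mathbb H}(t;z,\gamma w)\right\|_{\mathrm{op}}\leq K_{\mathrm{heat}}(t;d_{\hyp}(z,\gamma w)),
\end{equation*}
reducing the problem to a scalar tail estimate.

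Next I would derive a large-$\rho$ bound for $K_{\mathrm{heat}}(t;\rho)$ from the integral representation \eqref{heat kernel on H}. Using $\mathcal{T}_{2k}(x)\leq(x+\sqrt{x^2-1})^{2|k|}\leq(2x)^{2|k|}$ for $x\geq1$ together with $\cosh(r/2)/\cosh(\rho/2)\leq e^{(r-\rho)/2}$ when $r\geq\rho$, and the Taylor bound $\cosh r-\cosh\rho\geq \sinh\rho\cdot(r-\rho)$, the substitution $r=\rho+s$ yields, after a short Laplace-type computation, an estimate of the shape
\begin{equation*}
K_{\mathrm{heat}}(t;\rho)\,\leq\, C(t,k)\,(1+\rho)^{\alpha}\, e^{\beta\rho\,-\,\rho^{2}/(4t)},\qquad \rho\geq 1,
\end{equation*}
for constants $C(t,k),\alpha,\beta$ depending only on $t$ and $|k|$. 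The Gaussian factor $e^{-\rho^{2}/(4t)}$ dominates the exponential in $\rho$ for every fixed $t>0$. This is the step I expect to be the main technical obstacle, because one has to handle the integrable singularity at $r=\rho$ together with the Chebyshev-type growth and the prefactor $\sqrt{\cosh r-\cosh\rho}^{-1}$ carefully.

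Finally I would combine this with a lattice point count. Fix a compact set $K\subset\mathcal F\times\mathcal F$, let $z_{0},w_{0}$ be arbitrary base points, and let $D=\operatorname{diam}_{\hyp}(\pi_{1}(K))+\operatorname{diam}_{\hyp}(\pi_{2}(K))$. Then $|d_{\hyp}(z,\gamma w)-d_{\hyp}(z_{0},\gamma w_{0})|\leq D$ uniformly in $(z,w)\in K$ and $\gamma\in\widetilde\Gamma$. Since the hyperbolic area of a ball grows like $e^{R}$, standard lattice point counting gives $\#\{\gamma\in\widetilde\Gamma:\ d_{\hyp}(z_{0},\gamma w_{0})\in[n,n+1)\}\ll_{\Gamma,z_{0},w_{0}}e^{n}$. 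A dyadic decomposition produces
\begin{equation*}
\sum_{\gamma\in\widetilde\Gamma}K_{\mathrm{heat}}(t;d_{\hyp}(z,\gamma w))\ \ll_{K,t,k}\ \sum_{n\geq 0}e^{n}\,(1+n)^{\alpha}\,e^{\beta(n+D)-(n-D)^{2}/(4t)}\ <\ \infty,
\end{equation*}
independent of $(z,w)\in K$. Weierstrass's $M$-test then delivers the claimed absolute and uniform convergence on $K$.
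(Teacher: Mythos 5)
Your proposal is correct and follows essentially the same route as the paper: reduce to the scalar sum $\sum_{\gamma}K_{\mathbb H}(t;z,\gamma w)$ using unitarity of $\chi$ and unimodularity of the automorphy factors, bound $K_{\mathrm{heat}}(t;\rho)$ by a Gaussian in $\rho$ from the integral representation, and beat the $e^{\rho}$ lattice-point growth (the paper cites \cite{pr09} for $N(\rho;z,w)=O(e^{\rho})$ and \cite[Proposition 3.3]{FJK16} for the bound $K_{\mathrm{heat}}(t;\rho)\leq e^{-\rho^{2}/(8t)}G_k(t)$, then sums via a Stieltjes integral). Your shell decomposition and hand-derived heat-kernel estimate are just equivalent implementations of these steps.
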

\begin{proof}
	Let $U$ be a compact subset of $\mathcal F\times \mathcal F$. Since $\chi$ is a unitary multiplier system and
	$\left|\left(\frac{c\overline{w}+d}{cw+d}\right)^k\left(\frac{z-\gamma\overline{w}}{\gamma w-\overline{z}}\right)^k\right|=1$ for any $\gamma \in \widetilde\Gamma$ and $z,w \in \mathcal F$, it follows that
	\begin{align}
	 \left|\overline{\chi(\gamma)}\left(\frac{c\overline{w}+d}{cw+d}\right)^k\left(\frac{z-\gamma\overline{w}}{\gamma w-\overline{z}}\right)^k K_{\mathbb H}(t;z,\gamma w)\right|_{\mathrm{End}(V)}=\sqrt{d}|K_{\mathbb H}(t;z,\gamma w)|,
	\end{align}
	where $|\cdot|_{\mathrm{End}(V)}$ denotes hermitian norm on $\mathrm{End}(V)$.
	Therefore, in order to prove the absolute and uniform convergence of $K_{\hyp}(t;z,w)$ for any $t>0$ and $(z,w)\in U$, we need to prove the convergence of the series
	$$\sum_{\gamma \in \widetilde\Gamma}K_{\mathbb H}(t;z,\gamma w)$$
	in $\mathbb C$. Introduce  the counting function
	$$N(\rho; z,w):=\#\{\gamma \in \widetilde\Gamma~|~d_{\hyp}(z,\gamma w)<\rho\},$$
	which is defined for any $\rho >0$ and $(z,w)\in U$.
Then \cite{pr09} gives a bound (uniformly for all $(z,w)\in U$) for the function $N(\rho; z,w)$, namely
	$$N(\rho; z,w)=O_{\widetilde\Gamma}(e^{\rho}),$$
	where the implied constant depends only on $\widetilde\Gamma$.
	By Stieltjes integral representation, we have
	\begin{equation}\label{si1}
	\sum_{\gamma \in \widetilde\Gamma}K_{\mathbb H}(t;z,\gamma w)=\int_0^{\infty}K_{\mathrm{heat}}(t;\rho)~dN(\rho;z,w).
	\end{equation}
	Using the fact that $K_{\mathrm{heat}}(t;\rho)$ is a non-negative, continuous and monotonic decreasing function of $\rho$, write
	\begin{equation}\label{si2}
	 \int_0^{\infty}K_{\mathrm{heat}}(t;\rho)~dN(\rho;z,w)=O\left(\int_0^{\infty}K_{\mathrm{heat}}(t;\rho)e^{\rho}~d\rho\right).
	\end{equation}
	Following the idea of the proof of \cite[Proposition 3.3]{FJK16}, we obtain that
	\begin{equation}\label{si3}
	K_{\mathrm{heat}}(t;\rho)\leq e^{-\rho^2/(8t)}G_k(t),
	\end{equation}
	where the function $G_k(t)$ is given by
	$$G_k(t):=\frac{e^{-t/4}}{(4\pi t)^{3/2}}\int_{0}^{\infty} \frac{re^{-r^2/(8t)}}{\sinh(r/2)}e^{kr}dr. $$
	Combining \eqref{si1}, \eqref{si2} and \eqref{si3}, we obtain that
	$$
	\sum_{\gamma \in \widetilde\Gamma}K_{\mathbb H}(t;z,\gamma w)=O_{\widetilde\Gamma}(G_k(t)h(t)),$$
	with $h(t):= \int_0^{\infty}e^{-\rho^2/(8t)}e^{\rho}d\rho$ and where the implied constant depends only on $\widetilde\Gamma$. Hence, the proof is complete.
\end{proof}

Notice that using the notations, introduced in Section 2 of the paper, we can rewrite the heat kernel as
$$K_{\hyp}(t;z,w)=\frac{1}{2}\sum_{\gamma\in \widetilde\Gamma} \overline{\chi(\gamma)}J_{\gamma,k}(w)^{-1}H_k(z,\gamma w)^{-1} K_{\mathbb H}
(t;z,\gamma w)~~~~~~~~~(z,w\in \mathcal F).$$
Now using the fact that $H_k$ is a weight $k$ point-pair invariant, $\chi$ is a multiplier system and the relation
$$J_{\eta,k}(\gamma z)J_{\gamma,k}(z)=\omega_{2k}(\eta,\gamma)J_{\eta \gamma, k}(z), ~~~~(\eta, \gamma \in SL_2(\mathbb R), z \in \mathbb H),$$
one can easily prove the following equations:
$$K_{\hyp}(t;\eta z,w)=K_{\hyp}(t;z,w)J_{\eta, k}(z)^{-1}\chi(\eta)^{-1},$$
$$K_{\hyp}(t;z,\eta w)=J_{\eta, k}(w)\chi(\eta)K_{\hyp}(t;z,w),$$
for $t>0, z,w \in \HH$ and $\eta \in \Gamma$.
The hyperbolic heat kernel $K_{\hyp}(t;z,w)$ admits the spectral expansion
\begin{align}\label{seh}
K_{\hyp}(t;z,w)=&\sum_{\lambda_j\geq \lambda_0}e^{-\lambda_j
	t}\vp_j(z)\overline{\vp_j}(w)^t
\notag\\
&+\frac{1}{4\pi}{\sum_{j=1}^{c_{\G}}\sum_{l=1}^{m_j}\int_{-\infty}^{\infty}e^{-(1/4+r^2)t}E_{jl}(z, 1/2 + ir)\overline{E_{jl}}(w, 1/2 + ir)^tdr.}
\end{align}

\noindent
Let $U\subset \mathcal F\times \mathcal F$ be a compact subset and let $t>0$.
Using the sup-norm bound \eqref{sup norm bound varphi} for the eigenfunctions $\vp_j$ ($j\geq 0$) and
applying the norm arising from the Hermitian inner product to a $d\times d$ matrix in \eqref{C_inner_product},
we obtain that
$$\bigg|\sum_{\lambda_0\leq \lambda_j< 1/4}e^{-\lambda_j
	t}\vp_j(z)\overline{\vp_j}(w)^t
+\sum_{\lambda_j\geq 1/4}e^{-\lambda_j
	t}\vp_j(z)\overline{\vp_j}(w)^t\bigg|_{d\times d} \ll \sum_{\lambda_j\geq \lambda_0}\lambda_j^{-2},$$
where $\lambda_0=|k|(1-|k|)$ and the implied constant depends only on $t$ and on the set $U$, apart from $k$ and $d$  (this is possible because the number of eigenvalues in the interval $[\lambda_0,1/4)$ is finite). The series $\sum_{\lambda_j\geq 1/4} \lambda_j^{-2}$ is convergent (\cite[Theorem 1.6.5]{Fi87}), and so is the series on the right-hand side of \eqref{seh}.
Similarly, using H{\"o}lder's inequality and Proposition \ref{prop:wave-dist}(a) below, we deduce the absolute and uniform convergence of each integral on the right hand side of \eqref{seh}. Therefore, for any $t>0$, the series and the integrals on the right hand side of \eqref{seh}
converge absolutely and uniformly on every compact
subset of $\mathcal F\times \mathcal F$.

From the integral representation of $K_{\mathbb H}(t;d_{\hyp}(z,w))$ and the spectral expansion \eqref{seh}, we deduce that $K_{\hyp}(t;z,w)$ satisfies the following estimates, stated component-wise:
\begin{equation}\label{tz}
K_{\hyp}(t;z,w)=O_{\mathcal F,k}(t^{-3/2}e^{-d^2_{\hyp}(z,w)/4t})~~~~~~ {\rm ~as~} t\rightarrow 0,
\end{equation}
\begin{equation}\label{ti}
K_{\hyp}(t;z,w)=O_{\mathcal F,k}(e^{-\lambda_0 t})~~~~~~~~~~~~~~~~{\rm ~as~} t\rightarrow \infty.
\end{equation}

For every $Z\in \mathbb C$ with Re$(Z)\geq -|k|(1-|k|)$, $z,w \in \mathcal F$ and $u\in \mathbb C$ with Re$(u)\geq 0$, the translated by $-Z$ Poisson kernel $\mathcal P_{M,-Z}(u;z,w)$ is defined as
\begin{equation}\label{pk}
\mathcal P_{M,-Z}(u;z,w):=\frac{u}{\sqrt{4\pi}}\int_{0}^{\infty}K_{\hyp}(t;z,w)e^{-Zt}e^{-u^2/4t}t^{-3/2}dt,
\end{equation}
where the integral is taken component-wise.
This kernel is a fundamental solution of the associated differential operator $\Delta_k+Z-\partial_u^2$. Furthermore, using the spectral expansion
of the heat kernel $K_{\hyp}(t;z,w)$ and the identity (see \cite{JLa03})
$$e^{-a\lambda}=\frac{a}{\sqrt{4\pi}}\int_{0}^{\infty}e^{-t\lambda}e^{-a^2/4t}\frac{dt}{t^{3/2}},~~~ \lambda\geq 0,~~ a\in \mathbb C, ~{\rm Re}(a)\geq 0,$$
we have the following spectral expansion
\begin{align*}
\mathcal P_{M,-Z}(u;z,w)=&\sum_{\lambda_0 \leq \lambda_j <1/4}e^{-u\sqrt{\lambda_j+Z}}\vp_j(z)\overline{\vp_j}(w)^t+
\sum_{\lambda_j\geq 1/4}e^{-u\sqrt{\lambda_j+Z}}\vp_j(z)\overline{\vp_j}(w)^t\notag\\
&+\frac{1}{4\pi}{\sum_{j=1}^{c_{\G}}\sum_{l=1}^{m_j}\int_{-\infty}^{\infty}e^{-u|r|}E_{jl}(z, 1/2 + ir)\overline{E_{jl}}(w, 1/2 + ir)^tdr.}
\end{align*}

Following the steps of the proof of \cite[Theorem 5.2]{JLa03}, using the estimates \eqref{tz}--\eqref{ti} for each component of the heat kernel $K_{\hyp}(t;z,w)$ and the fact that
$$K_{\hyp}(t;z,w)-\sum_{\lambda_j\leq 1/4}e^{-\lambda_j t}=O(e^{-\lambda t})~~~~~~~ {\rm as}~t\rightarrow \infty,$$
where $\lambda$ is the first eigenvalue of $\Delta_k$ bigger than $1/4$, one can deduce that,
for Re$(u)>0$ and Re$(u^2)>0$, the Poisson kernel $\mathcal P_{M,-Z}(u;z,w)$
has an analytic continuation for each entry of the matrix to $Z=-1/4$. The continuation is given by
\begin{align}\label{sepk}
\mathcal P_{M,1/4}(u;z,w)=&\sum_{\lambda_0 \leq \lambda_j <1/4}e^{-u\sqrt{\lambda_j-1/4}}\vp_j(z)\overline{\vp_j}(w)^t+
\sum_{\lambda_j\geq 1/4}e^{-ut_j}\vp_j(z)\overline{\vp_j}(w)^t\notag\\
&+\frac{1}{4\pi}{\sum_{j=1}^{c_{\G}}\sum_{l=1}^{m_j}\int_{-\infty}^{\infty}e^{-u|r|}E_{jl}(z, 1/2 + ir)\overline{E_{jl}}(w, 1/2 + ir)^tdr,}
\end{align}
where $t_j=\sqrt{\lambda_j-1/4}\geq 0$, for $\lambda_j\geq 1/4$ and for $\lambda_j<1/4$ we take the principal branch of $\sqrt{\lambda_j-1/4}$.

When $\G$ is cocompact, the sum over cusps (i.e. the last sum on the right hand side of \eqref{sepk}) is identically zero.

\subsection{The wave distribution and its integral representation}
Let $L^1(\RR)$ denote the space of absolutely integrable functions on $\RR$ and let $C_0^{\infty}(\RR)$ denote its subspace of all infinitely differentiable functions with compact support.
\begin{definition}\label{def:H}
 For any $a\geq 0$, denote by $L^1(\RR,a)$ (resp. $S'(\RR,a)$) the space of \textit{even} functions $g$ in $L^1(\RR)$ (resp. in the Schwartz space on $\RR$) such that  $g(u)\exp(|u|a)$ is absolutely dominated by an integrable function on $\RR$.
 Denote the Fourier transform of every $g\in L^1(\RR,a)$ by
 \begin{equation}\label{H-Fourier}
 H(r,g)\:=\:\int_{-\infty}^\infty g(u)\exp(iru)~du,
 \end{equation}
with the domain extended to all $r\in\CC$ for which it is well-defined.
\end{definition}
Notice that, since $g$ is assumed to be even,
\begin{equation}\label{H-even}
H(r,g)\:=\:2\int_{0}^{\infty}\cos(ur)g(u)~du.
\end{equation}
The following result is a generalization of Lemma 3 in \cite{JvPS16}.
\begin{lemma} \label{lem:cond-g} 
Let $n\geq 3$ be an integer.
\begin{enumerate}[label=$(\alph*)$]
\item Let $g\in L^1(\RR,a)$ be such that $g^{(l)}\in L^1(\RR)$ for $1\leq l\leq n$,
and $\lim_{u\to\infty} g^{(l)}(u)=0$ for $0\leq l\leq n-1$.
Then the Fourier transform $H(r, g)$ is well-defined for $r\in\{ z\in\CC\,|\,|\Im(z)|\leq a\}$ and satisfies the conditions (S1), (S2\,$'$), and (S3) with $\delta=n-2$.
\item Let $\eta>0$ and let $g\in S'(\RR,a+\eta)$ be such that $g^{(j)}(u)\exp(|u|(a +\eta))$ is absolutely bounded by some integrable function on $\RR$ for $1\leq j\leq n-1$. Then the function $H(r, g)$ satisfies the conditions (S1), (S2) for any $0 <\epsilon <\eta$, and (S3)  with $\delta=n-2$.
\item If $g\in S'(\RR,a)$, then $H(r, g)$ is a Schwartz function in $r\in\RR$.
\end{enumerate}
\end{lemma}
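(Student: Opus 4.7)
The plan is to prove all three parts by standard Fourier-analytic arguments: evenness of $H(r,g)$ is inherited from $g$, well-definedness and holomorphy come from dominated convergence and Morera's theorem, and polynomial decay in $r$ is obtained via iterated integration by parts.

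For part $(a)$, condition (S1) is immediate from $g$ being even. For (S2$'$), the integrand is dominated uniformly for $r\in\RR\cup[-ia,ia]$ by $|g(u)|e^{a|u|}$, which is bounded by an integrable function since $g\in L^1(\RR,a)$; this yields well-definedness. For (S3), observe that in $\RR\cup[-ia,ia]$ one has $|r|\to\infty$ only along the real axis. On $\RR$ we have $|e^{iru}|=1$ and, by hypothesis, $g^{(l)}\to 0$ at infinity for $0\leq l\leq n-1$ (together with evenness of $g$, also as $u\to-\infty$), so the boundary terms vanish after each of $n$ integrations by parts, producing
\[
H(r,g) = \frac{(-1)^n}{(ir)^n}\int_{-\infty}^{\infty} g^{(n)}(u)\,e^{iru}\,du.
\]
Since $g^{(n)}\in L^1(\RR)$, this gives $|H(r,g)|\leq \|g^{(n)}\|_1\,|r|^{-n}$, i.e.\ (S3) with $\delta=n-2$.

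For part $(b)$, (S1) is again immediate. For holomorphy in the strip $|\Im(r)|<a+\epsilon$ with $0<\epsilon<\eta$ (condition (S2)), the integrand $g(u)e^{iru}$ is entire in $r$ and, on compact subsets of the strip, dominated by $|g(u)|e^{(a+\eta)|u|}$, which is integrable since $g\in S'(\RR,a+\eta)$; Morera's theorem combined with Fubini then gives analyticity. For (S3), I would iterate integration by parts inside the sub-strip $|\Im(r)|\leq a+\epsilon$. The vanishing of boundary terms reduces to showing $|g^{(j)}(u)|e^{(a+\epsilon)|u|}\to 0$ as $|u|\to\infty$ for the relevant $j$. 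This is obtained via the standard fact that if $f,f'\in L^1(\RR)$ then $f\to 0$ at infinity, applied to $f(u)=g^{(j)}(u)e^{(a+\eta)|u|}$ (whose derivative is a combination of weighted derivatives of $g$ of orders $j$ and $j+1$, both integrable under our hypotheses for $0\leq j\leq n-2$). Performing $n$ integrations by parts, with the final integrand controlled via Schwartz decay of $g^{(n)}$ together with the same mechanism, yields $|H(r,g)|\ll (1+|r|)^{-n}$, which is (S3) with $\delta=n-2$.

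For part $(c)$, if $g\in S'(\RR,a)$ then $g$ is an even Schwartz function on $\RR$. Up to the sign of the argument, $H(r,g)$ is the Fourier transform of $g$; since the Fourier transform preserves the Schwartz space, $H(\cdot,g)$ is itself Schwartz on $\RR$. The main technical subtlety, specific to part $(b)$, is controlling boundary terms in integration by parts inside the open strip rather than only on its real axis; the FTC-based decay argument above is what closes this gap.
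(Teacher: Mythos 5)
Your overall route is the same as the paper's: part (a) via dominated convergence on the closed strip $|\Im(r)|\le a$ plus $n$-fold integration by parts giving $|H(r,g)|\ll |r|^{-n}$ on $\RR$, part (b) via uniform domination of $g(u)e^{iru}$ on substrips (holomorphy) plus the same integration-by-parts computation, and part (c) via invariance of the Schwartz space under the Fourier transform. Working on all of $\RR$ with $e^{iru}$ rather than on $[0,\infty)$ with $\cos(ur)$ (as the paper does, using evenness to kill the boundary terms at $0$) is an immaterial difference, and your observation that $f,f'\in L^1(\RR)$ forces $f\to 0$ at infinity is in fact a more careful treatment of the vanishing of the boundary terms in the strip than the paper's, which only records that $g^{(j)}(u)\cos(ur)$ and $g^{(j)}(u)\sin(ur)$ are dominated by integrable functions for $1\le j\le n-1$ and then refers back to the computation of part (a).

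The one step in your write-up that does not hold as stated is the treatment of the top-order terms in part (b). Your FTC mechanism applied to $f=g^{(j)}(u)e^{(a+\eta)|u|}$ requires $g^{(j+1)}(u)e^{(a+\eta)|u|}\in L^1(\RR)$, so it only covers $j\le n-2$; the boundary term involving $g^{(n-1)}$ at the $n$-th integration by parts and, more seriously, the final integral $\int g^{(n)}(u)e^{iru}\,du$ are not controlled by ``Schwartz decay of $g^{(n)}$'': Schwartz decay is superpolynomial but not exponential, so it cannot absorb the factor $e^{(a+\epsilon)|u|}$ coming from $|e^{iru}|$ in the strip, and the hypotheses of (b) say nothing about $g^{(n)}$ against an exponential weight. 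To be fair, the paper's own proof is silent on exactly the same point (it, too, only bounds $g^{(j)}$ for $j\le n-1$), so this is a shared soft spot rather than a departure from the paper. A clean way to stay strictly within the stated hypotheses is to stop after $n-1$ integrations by parts: all boundary terms then vanish by your FTC argument, and the remaining integral is bounded by $\int G_{n-1}(u)e^{-(\eta-\epsilon)|u|}\,du$, giving $H(r,g)\ll (1+|r|)^{-(n-1)}$ uniformly on the substrip, i.e. (S3) with $\delta=n-3$; this suffices for every use of the lemma in the paper (there $n=4$ and only some $\delta>0$ is needed), whereas the claimed exponent $\delta=n-2$ in the strip really calls for an additional hypothesis such as $g^{(n)}(u)e^{(a+\eta)|u|}$ being dominated by an integrable function, under which your argument closes immediately.
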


\begin{proof}
$(a)$ For all $r\in\{ z\in\CC\,|\,|\Im(z)|\leq a\}$, $u\in\RR$ and $g\in L^1(\RR,a)$,
$\lvert g(u)\exp(iru)\rvert\leq |g(u)|e^{|u|a}$
	is  dominated by an integrable function on $\RR$, and thus $H(r, g)$ is well-defined. It is also even with respect to $r$. Furthermore, for every $r\in\RR$, using the assumptions on the decay of $g^{(l)}$ for $0\leq l\leq n-1$
	and the fact that $g^{(2j+1)}(0) =0$ (since $g$ is even),
we obtain that
	\begin{align*}
		\frac12 H(r,g) &=\left[\frac1r \sin(ur)g(u)\right]_0^{\infty} -\int_0^{\infty} \frac1r \sin(ur)g'(u)du \\
		&=\left[\frac{1}{r^2} \cos(ur)g'(u)\right]_0^{\infty} -\int_0^{\infty} \frac{1}{r^2}\cos(ur)g''(u)du\\
		&=\:\dots\\
&=\left[\frac{(-1)^{1+n/2}}{r^{n-1}} \sin(ur)g^{(n-2)}(u)\right]_0^{\infty} +(-1)^{n/2}\int_0^{\infty} \frac{1}{r^{n-1}}\sin(ur)g^{(n-1)}(u)du\\
&=\left[\frac{(-1)^{1+n/2}}{r^n} \cos(ur)g^{(n-1)}(u)\right]_0^{\infty} +(-1)^{n/2}\int_0^{\infty} \frac{1}{r^n}\cos(ur)g^{(n)}(u)du,\\
	\end{align*}
when $n$ is even. For odd $n$, we obtain a similar series of equations, terminating at
$$\mp\left[\frac{1}{r^{n}} \sin(ur)g^{(n-1)}(u)\right]_0^{\infty} \pm\int_0^{\infty} \frac{1}{r^{n}}\sin(ur)g^{(n)}(u)du.$$
Hence, using the definition of $H(r,g)$ and the integrability conditions, it follows that
$$\frac{(1+|r|)^n}{2}|H(r,g)|\leq c\cdot\sum_{l=0}^n\int_0^{\infty}|g^{(l)}(u)|du\ll 1,$$
for some constant $c$. This proves that $H(r,g)$ satisfies the condition (S3) for $r\in\RR$ with $\delta=n-2$.

$(b)$ By assumption, there is an integrable function $G(u)$ dominating $g(u)\exp(|u|(a+\eta))$ absolutely. In turn,
	\begin{equation*}
	 \lvert g(u)\cos(ur)\rvert\leq G(u)\exp(-(\eta-\epsilon)|u|)
	\end{equation*}
is uniformily bounded in the strip $\lvert \Im(r)\rvert\leq a+\epsilon$ for $0<\epsilon<\eta$. Hence, the integral defining $H(r,g)$ converges absolutely and uniformly on any compact set contained in such a strip, and thus defines a holomorphic function on the open strip $\{r\in\CC\,|\,|\Im(r)| < a+\eta\}$. In particular, conditions (S1) and (S2) are satisfied.
Similarly, for $j=1,\dots,n-1$, the functions $g^{(j)}(u)\cos(ur)$, as well as $g^{(j)}(u)\sin(ur)$, are bounded absolutely and uniformly in $r$ by some integrable functions $G_j(u)\exp(-(\eta-\epsilon)|u|)$. Recalling the computation involving partial integration from part~$(a)$, we obtain (S3) as well.

$(c)$ If $g\in S'(\RR,a)$, then its Fourier transform $H(r,g)$  is a Schwartz function in the variable $r\in\RR$.
\end{proof}

We now define the wave distribution.
\begin{definition}[Wave distribution]\label{def:wave_dist}
Let $z, w\in \mathcal F$. For every $g\in C_0^{\infty}(\RR)$, the \textit{wave distribution} $\mathcal{W}_{M,k,\chi}(z,w)$ applied to $g$ is defined as
\begin{equation} \label{def: wave distr}
\begin{split}
\mathcal{W}_{M,k,\chi}(z,w)(g) :=& \sum_{\lambda_j\geq|k|(1-|k|)}H(t_j,g)\vp_j(z)\overline{\vp_j}(w)^t\\
&+\frac{1}{4\pi}\sum_{j=1}^{c_{\G}}\sum_{l=1}^{m_j}\int_{-\infty}^{\infty}H(r, g)E_{jl}(z, 1/2 + ir)\overline{E_{jl}}(w, 1/2 + ir)^tdr,
\end{split}
\end{equation}
where $\sqrt{\lambda_j-1/4}=t_j\geq 0$ for $\lambda_j\geq 1/4$ and $t_j\in(0, iA]$ when $\lambda_j<\frac14$, where $A$ is defined in \eqref{eq:A}.
\end{definition}

\begin{proposition}\label{prop:wave-dist}
Let $z, w\in \mathcal F$.
\begin{enumerate}[label=$(\alph*)$]
 \item
  For every $g$ as in Lemma \ref{lem:cond-g}$.(a)$ with $a=A$ and $n=4$, the wave distribution $\mathcal{W}_{M,k,\chi}(z,w)$ is well-defined.
 \item
 Let $g\in S'(\RR,A)$ satisfy the conditions of Lemma \ref{lem:cond-g}$.(b)$ with $n=4$. Then $\mathcal W_{M,k,\chi}(z,w)(g)$ represents the automorphic kernel $K_{\Gamma}(z,w)=K_{\Gamma,\Phi}(z,w)$ for the inverse Selberg Harish-Chandra transform $\Phi$ of $H(\cdot,g)$.
\end{enumerate}
\end{proposition}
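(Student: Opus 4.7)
The plan is to handle $(a)$ and $(b)$ separately, combining the decay of $H(r,g)$ from Lemma \ref{lem:cond-g} with the sup-norm and pre-trace bounds of Section \ref{sec:sup norm}.

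For part $(a)$ the goal is absolute and locally uniform convergence of both the discrete sum and the Eisenstein integrals on the right-hand side of \eqref{def: wave distr}. By Lemma \ref{lem:cond-g}$(a)$ applied with $a=A$ and $n=4$, $H(r,g)$ is well-defined on $\RR\cup[-iA,iA]$, which contains every $t_j$ and every $r$ occurring in \eqref{def: wave distr}, and satisfies $|H(r,g)|\ll(1+|r|)^{-4}$. Only finitely many eigenvalues $\lambda_j$ lie below $1/4$ (so that $t_j\in(0,iA]$), and their contribution is a finite sum. For the tail $\lambda_j\geq 1/4$, the identity $\lambda_j=1/4+t_j^2$ yields the uniform estimate $|H(t_j,g)|\,\lambda_j^2\ll 1$, whence
\[
|H(t_j,g)|\,|\vp_j(z)|_V\,|\vp_j(w)|_V\;\ll\;\frac{|\vp_j(z)|_V\,|\vp_j(w)|_V}{\lambda_j^2}.
\]
Cauchy--Schwarz then reduces matters to the averaged estimate $\sum_j|\vp_j(z)|_V^2/\lambda_j^2\ll 1$, uniform on compacta, which is precisely the bound obtained along the way in the proof of Proposition \ref{prop:sup_norm_bound}$(a)$ from the pre-trace formula \eqref{pre-trace fla} evaluated at $s=|k|+2$, $t=|k|+3$. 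A parallel argument controls the Eisenstein integrals: the weight $\bigl((\tfrac14+r^2+(|k|+2)^2)^2-(|k|+2)^2\bigr)^{-1}$ appearing in Proposition \ref{prop:sup_norm_bound}$(b)$ is of order $(1+|r|)^{-4}$, matching the decay of $H(r,g)$, so after factoring this weight out and applying Cauchy--Schwarz, Proposition \ref{prop:sup_norm_bound}$(b)$ supplies the required bound.

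For part $(b)$, Lemma \ref{lem:cond-g}$(b)$ ensures that $H(r,g)$ satisfies conditions (S1), (S2) and (S3), i.e.\ precisely the hypotheses of Proposition \ref{prop-autom-kernel-expansion}. What remains is to produce a point-pair invariant $\Phi$ whose Selberg--Harish-Chandra transform equals $H(\cdot,g)$ and whose derivatives satisfy $|\Phi^{(\ell)}(t)|\ll(t+4)^{-\delta-\ell}$ for $\ell=0,1,2,3,4$ and some $\delta>\max\{1,|k|\}$. My plan is to construct $\Phi$ by reversing the three steps of the transform: Fourier inversion in step (iii) simply returns the given $g$; step (ii) yields $Q(y)=g\bigl(\mathrm{arccosh}(1+y/2)\bigr)$, which, because $g\in S'(\RR,A)$ decays faster than any polynomial, inherits polynomial decay of arbitrary order in $y$; step (i) is an Abel-type transform in $\Phi$ weighted by the phase $\bigl((\sqrt{y+4}+iv)/(\sqrt{y+4}-iv)\bigr)^k$, which can be inverted explicitly, and the decay of $\Phi$ and its first four derivatives then follows by differentiating the inversion formula and using the Schwartz-class decay of $g,g',g'',g'''$ (guaranteed by Lemma \ref{lem:cond-g}$(b)$ with $n=4$). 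Once such $\Phi$ is available, $h_\Phi=H(\cdot,g)$ by construction, and Proposition \ref{prop-autom-kernel-expansion} immediately identifies $\mathcal{W}_{M,k,\chi}(z,w)(g)$ with $K_{\Gamma,\Phi}(z,w)$, finishing $(b)$.

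The main obstacle I expect is the explicit inversion of step (i) for arbitrary real weight $k$ and the careful tracking of the decay of each $\Phi^{(\ell)}$: the presence of the phase $((\sqrt{y+4}+iv)/(\sqrt{y+4}-iv))^k$ makes the inversion less transparent than in the weight-zero case, and the derivative bounds require propagating the Schwartz decay of $g$ through several nested substitutions. On the side of $(a)$ the conceptually delicate point is that the pointwise sup-norm bound $|\vp_j|_V\ll\lambda_j$ combined with $(1+|t_j|)^{-4}$-decay of $H$ is just barely insufficient term by term; the argument succeeds only because the quadratic mean $\sum_j|\vp_j(z)|_V^2/\lambda_j^2$ is bounded on compacta, a genuinely deeper input extracted from the pre-trace formula \eqref{pre-trace fla}.
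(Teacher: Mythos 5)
Your part (a) is correct and is essentially the paper's own argument: decay $H(t_j,g)\ll(1+|t_j|)^{-4}$ from Lemma \ref{lem:cond-g}$(a)$, Cauchy--Schwarz, and comparison with the pre-trace formula \eqref{pre-trace fla} at $s=|k|+2$, $t=|k|+3$ (with Proposition \ref{prop:sup_norm_bound}$(b)$ handling the Eisenstein integrals). The genuine gap is in part (b), at the step where you pass from $g$ to $Q$. You assert that because $g$ "decays faster than any polynomial", the function $Q(y)=g\bigl(2\log(\tfrac12\sqrt{y+4}+\tfrac12\sqrt{y})\bigr)$ "inherits polynomial decay of arbitrary order in $y$". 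This is false: the substitution is $u\asymp\log y$, so super-polynomial decay in $u$ only gives decay faster than powers of $\log y$; the only usable input is the exponential-weight condition $g(u)\exp((A+\eta)|u|)\ll G(u)$, which yields exactly $Q(y)\ll(y+4)^{-(A+\eta)}$, and likewise $Q'(y)\ll(y+4)^{-(A+\eta)-1}$, $Q''(y)\ll(y+4)^{-(A+\eta)-2}$. This fixed, barely sufficient rate is the crux of the paper's proof: after inverting step (i) via \eqref{Phi-inverse}, the change of variables $y=\frac{\sqrt{x+4+t^2}-t}{\sqrt{x+4+t^2}+t}$ produces an integral over $(0,1)$ that converges only because $(A+\eta+1)-|k|-\tfrac32>-1$, i.e. because of the specific choice $A\geq|k|-\tfrac12$, and it gives $\Phi(x)\ll(x+4)^{-(A+\eta+\frac12)}$, which exceeds the required exponent $\max\{1,|k|\}$ only by the margin $\eta$. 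None of this marginal bookkeeping (which must also absorb the weight-$k$ phase factor you flag as an "obstacle") is visible from your claim, so the decisive estimate of part (b) is missing from the proposal.

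A second, related problem is your plan to verify $|\Phi^{(\ell)}(t)|\ll(t+4)^{-\delta-\ell}$ for all $\ell=0,\dots,4$. With $n=4$, Lemma \ref{lem:cond-g}$(b)$ only provides exponentially weighted control of $g^{(j)}$ for $j\leq 3$; a bound on $\Phi^{(4)}$ would require such control of $Q^{(5)}$, hence of $g^{(5)}$, which the hypotheses do not supply (Schwartz decay of higher derivatives carries no exponential weight). The paper sidesteps this entirely: by \cite[pp.~455--457]{He76}, the inversion of step (i) and the identification $h_\Phi=H(\cdot,g)$ only require $\Phi\in C^1(\RR^+)$ together with the two bounds \eqref{cond-Phi-inverse} for some $\alpha>\max\{1,|k|\}$, and exactly these are verified from the bounds on $Q'$ and $Q''$ (the $C^1$ property and the bound on $\Phi'$ being obtained by differentiating under the integral sign and repeating the same change of variables). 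So the repair is to replace your "four derivatives" plan by this $C^1$ criterion and to carry out the marginal decay computation described above.
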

\begin{proof}
$(a)$ By Lemma \ref{lem:cond-g}, the function $H(r, g)$ is well-defined for all $r\in\CC$ with $|\Im(r)|\leq A$, which implies that the finite sum
$$\sum_{|k|(1-|k|)\leq \lambda_j <\frac14} H(t_j,g)\vp_j(z)\overline{\vp_j}(w)^t$$
converges (recall that $\lim\limits_{j\to\infty}\lambda_j=\infty$). Furthermore, if $\lambda_j\geq\frac14$, then $t_j\in\RR$ and thus $H(t_j,g)\ll (1+|t_j|)^{-4}$ as $j\to\infty$.

Fix $z, w\in \mathcal F$ and observe that $H(t_j,g)\vp_j(z)\overline{\vp_j}(w)^t\in\CC^{d\times d}$. 
By applying the norm $|\cdot |_{d\times d}$ and H\"older's inequality, we obtain that
\begin{multline*}
\sum_{\lambda_j \geq\frac14}|H(t_j,g)\vp_j(z)\overline{\vp_j}(w)^t|_{d\times d}=\sum_{\lambda_j \geq\frac14}|H(t_j,g)||\vp_j(z)|_V |\vp_j(w)|_V\\
\ll\bigg(\sum_{\lambda_j \geq\frac14}|H(t_j,g)||\vp_j(z)|_V^2\bigg)^{1/2}\bigg(\sum_{\lambda_j \geq\frac14}|H(t_j,g)||\vp_j(w)|_V^2\bigg)^{1/2}.
\end{multline*}
Note that, due to the estimate on $|H(t_j,g)|$, each of the factors on the right-hand side can be compared with the sum occurring in the pre-trace formula \eqref{pre-trace fla}. Use \eqref{pre-trace fla} with $s=|k|+2$ and $t=|k|+3$, as in the proof of Proposition \ref{prop:sup_norm_bound}, to obtain the following bound:
\begin{align*}
\sum_{\lambda_j \geq\frac14}|H(t_j,g)||\vp_j(z)|_V^2&
\ll \sum_{\lambda_j \geq\frac14}\frac{1}{(1+|t_j|)^4}|\vp_j(z)|_V^2\\
&\ll \sum_{\lambda_j \geq\frac14}\frac{1}{t_j^4+t_j^2(\frac12 +2s^2)+\frac{1}{16} +\frac12 s^2+s^2(s^2-1)}|\vp_j(z)|_V^2\\
&\leq \frac{C(k,M,d)}{2(|k|+2)}.
\end{align*}

Convergence of the integral (uniform on compact subsets of $\mathcal{F}\times \mathcal{F}$) can be proved in a similar way, completing the proof.

$(b)$ Because the properties of $g$ imposed by assumption imply those claimed in part $(a)$, the wave distribution is well-defined by the spectral expansion with coefficients $H(r,g)$. The claim follows from Proposition~\ref{prop-autom-kernel-expansion}, once we establish that $H(r,g)$ belongs to the image of the of the Selberg Harish-Chandra transform.
Hence, we have to invert steps (i)--(iii) of page~\pageref{s/h-ch-transform}.
The inverse of $H(r,g)$ under Fourier transformation (iii) is trivially $g$. Since $g$ is an even $C^\infty$-function, the inverse $Q:\RR^+\to\CC$ of $g$ under (ii)  exists and it is given by
\begin{equation*}
 Q(y)\:=\:g\left( 2\log \left(\frac{1}{2}\sqrt{y+4}+\frac{1}{2}\sqrt{y}\right)\right).
\end{equation*}
It belongs to $C^\infty(\RR^+)$. Since $g(u)\exp((A+\eta)u)\to 0$ for $u\to\infty$, we obtain that
\begin{equation*}
 Q(y)\ll\exp\left(-(A+\eta)2\log \left(\frac{1}{2}\sqrt{y+4}+\frac{1}{2}\sqrt{y}\right)\right),
\end{equation*}
i.e.
\begin{equation*}
 Q(y)\ll(y+4)^{-(A+\eta)}.
\end{equation*}
Similarly, for its first derivative
\begin{equation*}
 Q'(y)\:=\: g'\left(2\log \left(\frac{1}{2}\sqrt{y+4}+\frac{1}{2}\sqrt{y}\right)\right)\cdot\frac{1}{\sqrt{y(y+4)}}
\end{equation*}
we find that
\begin{equation*}
 Q'(y)\ll(y+4)^{-(A+\eta)-1},
\end{equation*}
and for its second one
\begin{equation*}
\begin{split}
  Q''(y)\:=\:&g''\left(2\log \left(\frac{1}{2}\sqrt{y+4}+\frac{1}{2}\sqrt{y}\right)\right)\cdot\frac{1}{\sqrt{y(y+4)}}\\
  &-g'\left(2\log \left(\frac{1}{2}\sqrt{y+4}+\frac{1}{2}\sqrt{y}\right)\right)\cdot\frac{y+2}{(y(y+4))^{\frac{3}{2}}}
  \end{split}
\end{equation*}
we obtain the estimate
\begin{equation*}
 Q''(y)\ll(y+4)^{-(A+\eta)-2}.
\end{equation*}
By \cite[pp.~455--457]{He76}, the inverse of $Q$ under (i) is given by
\begin{equation}\label{Phi-inverse}
 \Phi(x)\:=\:-\frac{1}{\pi}\int\limits_{-\infty}^\infty Q'(x+t^2)\left(\frac{\sqrt{x+4+t^2}-t}{\sqrt{x+4+t^2}+t}\right)^k~dt\:,
\end{equation}
where $\Phi\in C^1(\RR^+)$ satisfies
\begin{equation}\label{cond-Phi-inverse}
 \lvert\Phi(x)\rvert\ll(x+4)^{-\alpha}\quad \textrm{ and }\quad \lvert\Phi'(x)\rvert\ll(x+4)^{-\alpha-1},
\end{equation}
for some $\alpha>\max\{1,|k|\}$. We have to show that the integral in (\ref{Phi-inverse}) is $C^1$ and satisfies the two conditions (\ref{cond-Phi-inverse}).

Let $\beta=A+\eta+1$. The bound on $Q'$ together with its differentiability allows us to conclude that the first condition of (\ref{cond-Phi-inverse}) holds, once we prove that
\begin{equation}\label{pre-bound}
\frac{-1}{\pi}\int\limits_{0}^\infty (x+4+t^2)^{-\beta}\left[\left(\frac{\sqrt{x+4+t^2}-t}{\sqrt{x+4+t^2}+t}\right)^k + \left(\frac{\sqrt{x+4+t^2}-t}{\sqrt{x+4+t^2}+t}\right)^{-k}\right]~dt\ll (x+4)^{-(\beta-1/2)}.
\end{equation}
Let $x_1=x+4$ and introduce the change of variables $y=\frac{\sqrt{x_1+t^2}-t}{\sqrt{x_1+t^2}+t}$ in the integral on the left-hand side of \eqref{pre-bound}. Using
\begin{equation*}
 t=\frac{\sqrt{x_1}(1-y)}{2\sqrt y}\:,\quad x_1+t^2=\frac{x_1(1+y)^2}{4y}\:,\quad dt=-\frac{\sqrt{x_1}(1+y)}{4y^\frac{3}{2}}dy\:,
\end{equation*}
the integral becomes
$$
-\frac{1}{\pi} 4^{\beta-1}x_1^{-(\beta-1/2)}\int\limits_0^1(1+y)^{-2\beta+1}(y^{\beta+k-3/2} + y^{\beta-k-3/2})~dy\:,
$$
which is finite if and only if $\beta-|k|-3/2>-1$. This inequality in turn holds, due to our choice of $A$ and $\eta>0$. This proves \eqref{pre-bound} and the first part of \eqref{cond-Phi-inverse}.

Next, we prove that $\Phi(x)$ is $C^1$ and that the second bound of \eqref{cond-Phi-inverse} holds. In order to prove that $\Phi(x)$ is $C^1$, it is sufficient to show that the integrand in \eqref{Phi-inverse} is differentiable in $x$ and that the derivative of the integrand is bounded by some integrable function. If so, then
$$
\Phi'(x)=\frac{-1}{\pi}\int\limits_{-\infty}^\infty\frac{d}{dx}\left( Q'(x+t^2)\left(\frac{\sqrt{x+4+t^2}-t}{\sqrt{x+4+t^2}+t}\right)^k \right)~dt\:.
$$
Differentiability of the integrand with respect to $x$ is obvious, so it remains to prove that
\begin{equation}\label{pre-bound for deriv}
\frac{-1}{\pi}\int\limits_{-\infty}^\infty\frac{d}{dx}\left( Q'(x+t^2)\left(\frac{\sqrt{x+4+t^2}-t}{\sqrt{x+4+t^2}+t}\right)^k \right)~dt \ll (x+4)^{-(\beta+1/2)}.
\end{equation}
Analogously to the above, starting with the bound for $Q''$, we immediately deduce that
$$
\frac{-1}{\pi}\int\limits_{-\infty}^\infty Q''(x+t^2)\left(\frac{\sqrt{x+4+t^2}-t}{\sqrt{x+4+t^2}+t}\right)^k~dt \ll (x+4)^{-(\beta+1/2)}.
$$
Therefore, to prove \eqref{pre-bound for deriv} and complete the proof of part $(b)$, it suffices to show that
$$
\frac{-1}{\pi}\int\limits_{0}^\infty Q'(x+t^2)\frac{d}{dx}\left[\left(\frac{\sqrt{x+4+t^2}-t}{\sqrt{x+4+t^2}+t}\right)^k + \left(\frac{\sqrt{x+4+t^2}-t}{\sqrt{x+4+t^2}+t}\right)^{-k}\right]~dt \ll (x+4)^{-(\beta+1/2)}.
$$
This can be done analogously to the proof of the first bound in \eqref{cond-Phi-inverse}, i.e. take $x_1=x+4$ and change variables to $y=\frac{\sqrt{x_1+t^2}-t}{\sqrt{x_1+t^2}+t}$. Using the bound for $Q'$, it follows that the above integral is bounded by
$$
\frac{k}{\pi}x_1^{-\beta-1/2}4^{\beta-1}\int\limits_0^1(1+y)^{-2\beta}(1-y)\left( y^{\beta+k-3/2} - y^{\beta-k-3/2}\right)\ll x_1^{-\beta-1/2},
$$
because $\beta-|k|-3/2>-1$.
\end{proof}

\begin{theorem}\label{thm:integral kernel}
	Let $z,w \in \mathcal F$ be such that $z\ne w$. Then there exists a continuous $d\times d$ matrix-valued function $W(u;z,w)$ on $\mathbb R^+$ such that the following hold:
	\begin{enumerate}[label=(\alph*)]
		\item
		 $W(u;z,w)=\sum_{\lambda_j<1/4}e^{u\sqrt{1/4-\lambda_j}}(\sqrt{1/4-\lambda_j})^{-4}\vp_j(z)\overline{\vp_j}(w)^t+O(u^4)$ as $u\rightarrow \infty$;
		\item
		$W^{(j)}(u;z,w)=O(u^{4-j})$ as $u\rightarrow 0 ~(j=0,1,2,3)$;
		\item
		For any $g\in S'(\mathbb R,A)$ such that $g^{(j)}(u)\exp(Au)$
has a limit as $ u\rightarrow \infty$ and is bounded by some integrable
		function on $\mathbb{R}$ for $j=0,1,2,3,4$, we have
		\begin{equation}\label{ir}
		\mathcal{W}_{M,k,\chi}(z,w)(g)=\int_0^{\infty}W(u;z,w)g^{(4)}(u)du.
		\end{equation}
	\end{enumerate}
\end{theorem}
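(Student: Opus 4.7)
My plan is to construct $W(u; z, w)$ by choosing, for each spectral parameter $t$, the unique four-fold antiderivative $F(u, t)$ of $2\cos(ut)$ that vanishes to order four at $u = 0$. Direct differentiation shows that
\begin{equation*}
F(u, t) := \frac{2\bigl[\cos(ut) - 1 + u^2 t^2/2\bigr]}{t^4} \qquad (t \neq 0),
\end{equation*}
extended continuously to $F(u, 0) := u^4/12$, satisfies $F^{(4)}(u, t) = 2\cos(ut)$ and $F^{(j)}(0, t) = 0$ for $j = 0, 1, 2, 3$. I would then define
\begin{equation*}
W(u; z, w) := \sum_j F(u, t_j)\, \vp_j(z) \overline{\vp_j}(w)^t + \frac{1}{4\pi} \sum_{j=1}^{c_\Gamma} \sum_{l=1}^{m_j} \int_{-\infty}^\infty F(u, r)\, E_{jl}(z, 1/2+ir)\, \overline{E_{jl}}(w, 1/2+ir)^t\, dr
\end{equation*}
and verify (a), (b), and (c) in turn.

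Property (b) follows from $F^{(j)}(0, t) = 0$ once termwise differentiation is justified. For (a), the idea is to isolate the finitely many small eigenvalues $\lambda_j < 1/4$: with $t_j = i\sqrt{1/4-\lambda_j}$,
\begin{equation*}
F(u, t_j) = \frac{2\bigl[\cosh\bigl(u\sqrt{1/4-\lambda_j}\bigr) - 1 - \tfrac{u^2(1/4-\lambda_j)}{2}\bigr]}{(1/4-\lambda_j)^2},
\end{equation*}
whose dominant exponential growth as $u \to \infty$ is $e^{u\sqrt{1/4-\lambda_j}}/(\sqrt{1/4-\lambda_j})^4$, producing precisely the leading term in (a). The contributions from the large discrete spectrum (where $|F(u, t_j)|$ grows at most polynomially in $u$) and from the Eisenstein integral are then absorbed into the $O(u^4)$ remainder.

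For (c), I would apply Fubini to exchange the $u$-integration against $g^{(4)}$ with the spectral sum and integral defining $W$, and, for each spectral mode, integrate by parts four times in $u$ using $F^{(4)}(u, t) = 2\cos(ut)$, to obtain
\begin{equation*}
\int_0^\infty F(u, t)\, g^{(4)}(u)\, du = 2 \int_0^\infty \cos(ut)\, g(u)\, du = H(t, g).
\end{equation*}
Boundary terms at $u = 0$ vanish from $F^{(j)}(0, t) = 0$ combined with $g'(0) = g'''(0) = 0$ (since $g$ is even); boundary terms at $u = \infty$ vanish because the hypothesis that $g^{(j)}(u)\exp(Au)$ has a limit forces the required exponential decay of $g^{(j)}$. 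Reassembling spectrally reconstructs $\mathcal{W}_{M, k, \chi}(z, w)(g)$.

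The hard part will be establishing absolute convergence of the spectral expansion defining $W$, uniformly on compact subsets of $\{(u, z, w) : u > 0,\ z \neq w\}$, and continuity in $u$. The kernel $F(u, r)$ decays only like $u^2/r^2$ at large $|r|$, so the sup-norm bound $|\vp_j|_V \ll \lambda_j$ of Proposition~\ref{prop:sup_norm_bound}(a) cannot be applied termwise on its own. I would instead combine that bound with the $L^2$-averaged estimate $\sum_j |\vp_j(z)|_V^2/(1+|t_j|)^4 \ll 1$ extracted from the pre-trace formula in Section~\ref{sec:sup norm}, together with the Eisenstein analogue of Proposition~\ref{prop:sup_norm_bound}(b), and split the spectral parameter at the threshold $|t| \asymp 1/u$. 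Taylor expansion gives $|F(u, t)| \lesssim u^4$ for $|ut| \leq 1$ and $|F(u, t)| \lesssim u^2/|t|^2 + 1/|t|^4$ for $|ut| > 1$, which together with Cauchy--Schwarz and the averaged bounds should control the spectral tails and yield the convergence and continuity needed to legitimize the Fubini step in (c) and the termwise asymptotic in (a).
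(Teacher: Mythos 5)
Your plan follows the same skeleton as the paper's proof (choose a four-fold antiderivative of the Fourier kernel, insert it into the spectral expansion to define $W$, then integrate by parts four times), but the specific antiderivative you chose breaks the convergence step, and the repair you sketch cannot work with the estimates available in the paper. Subtracting the honest Taylor polynomial $1-u^2t^2/2$ leaves the term $u^2/t^2$ in $F(u,t)$, so your kernel decays only like $t^{-2}$ in the spectral parameter (and its third $u$-derivative only like $t^{-1}$), exactly as you note. The only global spectral inputs available are the sup-norm bound $|\vp_j(z)|_V\ll|\lambda_j|$ of Proposition \ref{prop:sup_norm_bound} and the pre-trace formula \eqref{pre-trace fla}, whose weights are of size $\lambda_j^{-2}\asymp t_j^{-4}$; on a dyadic block $t_j\in[T,2T]$ these give only $\sum_{t_j\in[T,2T]}|\vp_j(z)|_V^2\ll T^4$. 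Hence your Cauchy--Schwarz estimate for that block yields a contribution $\ll (u^2/T^2)\cdot T^4=u^2T^2$, which diverges when summed over blocks; splitting the spectrum at $|t|\asymp 1/u$ does not help, because the obstruction sits in the large-$t_j$ tail, not near $t=0$. So absolute convergence of your spectral definition of $W(u;z,w)$ --- which you also need to justify termwise differentiation in (b) and the Fubini step in (c) --- cannot be extracted from Proposition \ref{prop:sup_norm_bound} and Lemma \ref{lemma: pre-trace}; indeed, with the expected local Weyl law the sum $\sum_j t_j^{-2}|\vp_j(z)|_V|\vp_j(w)|_V$ genuinely diverges.

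The paper's construction is designed precisely to avoid this. Instead of the Taylor polynomial it subtracts $\sum_{l=0}^{3}h_{l,3}(\sin t)(-\zeta)^l$, where $h_{l,3}$ is a polynomial (taken from \cite{CJS18}) satisfying $h_{l,3}(\sin t)=t^l/l!+O(t^4)$ as $t\to0$. Since $\sin t$ is bounded on $\RR$, the resulting kernel $w(\zeta,t)=\bigl(e^{-t\zeta}-\sum_{l\leq3}h_{l,3}(\sin t)(-\zeta)^l\bigr)t^{-4}$ is $O_\zeta(1)$ as $t\to0$ and $O_\zeta(t^{-4})$ as $t\to\infty$, so the spectral series converges absolutely by the same Cauchy--Schwarz/pre-trace argument already used in Proposition \ref{prop:wave-dist}(a). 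The price is that this antiderivative no longer vanishes to fourth order at $\zeta=0$; the paper compensates by comparing $\widetilde W(\zeta;z,w)$ with the four-fold iterated integral $\mathcal P^{(4)}$ of the Poisson kernel $\mathcal P_{M,1/4}$, the two differing by an explicit degree-$3$ polynomial $q(\zeta;z,w)$ in $\zeta$, and property (b) is then read off from $\mathcal P^{(m)}(\zeta;z,w)=O(\zeta^m)$ as $\zeta\to0$ rather than from vanishing derivatives of the kernel; finally $W(u;z,w)$ is obtained by symmetrizing at $\zeta=\pm iu$. With that modification, your computations for (a) and (c) (the leading exponential term from the finitely many eigenvalues $\lambda_j<1/4$, and the four integrations by parts with boundary terms killed by the evenness of $g$ and the decay hypothesis) do go through essentially as you describe. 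To keep your own kernel $F(u,t)$ you would need a genuinely new input, such as an off-diagonal bound of the shape $\sum_{t_j\le T}|\vp_j(z)|_V|\vp_j(w)|_V\ll T^{2-\delta}$, which the paper neither proves nor assumes.
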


\begin{proof}
	For every $\zeta \in \mathbb{C}\setminus \{0\}$ and $t\neq 0$ in the strip $\{t\in\CC:{|\rm Im}(t)|\leq A\}$, define
$$w(\zeta,t):=\frac{e^{-t\zeta}-\sum_{l=0}^{3}h_{l,3}(\sin t)(-\zeta)^l}{t^4}$$
	and set $w(\zeta,0):=\lim_{t\rightarrow 0}w(\zeta,t)$.
	In the above definition, $h_{l,3}(x)$ is a polynomial of degree at most 3 such that, for all $ t\in \mathbb R$ such that $t\to 0$,
	$$h_{l,3}(\sin t)=\frac{t^l}{l!}+O(t^4).$$
	For the explicit construction of $h_{l,3}(x)$, see the proof of \cite[Theorem 2]{CJS18}.
	It is easy to see that $w(\zeta,0)$ is well-defined and equal to ${\frac{\zeta^4}{24}}$.
	If $t \in \mathbb R^+$, then, using the above estimate of $h_{l,3}$ for $0\leq l\leq 3$, we obtain that
	$$ w(\zeta,t)=O_{\zeta}(1)~~\text{as}~~t~\rightarrow 0.$$
	Furthermore, if $\rm{Re}(\zeta)\geq 0$ and $t \in \mathbb R^+$, then $e^{-t \zeta }$ and $\sin(t)$ are bounded functions as $t\rightarrow \infty$. Therefore,
	\begin{equation}\label{wti}
	w(\zeta,t)=O_{\zeta}(t^{-4})~~\text{as}~~t~\rightarrow +\infty.
	\end{equation}
	
Note that the above estimates also hold for $w^{(j)}(\zeta,t)$, the $j$-th derivative of $w(\zeta,t)$ with respect to $\zeta$, for $j=1,2,3,4$.
	For $z,w \in \mathcal F$ with $z\neq w$ and $\zeta \in \mathbb C$ with Re$(\zeta)\geq 0$, define the following matrix-valued function:
	\begin{align}\label{Wtilde}
	\widetilde{W}(\zeta;z,w)=&\sum_{\lambda_j\geq \lambda_0}w(\zeta,t_j)\vp_j(z)\overline{\vp_j}(w)^t\notag \\
	& +
	 \frac{1}{4\pi}\sum_{j=1}^{c_{\G}}\sum_{l=1}^{m_j}\int_{-\infty}^{\infty}w(\zeta,|r|)E_{j,l}(z,1/2+ir)\overline{E_{j,l}}(w,1/2+ir)^tdr,
	\end{align}
where $t_j=\sqrt{\lambda_j-1/4}$ for every $j\geq 0$ is given by the principal branch of the square root. Recall that, if $\lambda_j<1/4$, then  $t_j\in (0, iA]$, otherwise $t_j\geq 0$. Also, in case when $\G$ is cocompact, the second sum on the right hand side of \eqref{Wtilde} is identically zero.

Following the same reasoning as in the proof of Proposition \ref{prop:wave-dist}, part $(a)$ (i.e., using the H\"older inequality, the estimate \eqref{wti}, comparing with the pre-trace formula \eqref{pre-trace fla} and using the bounds obtained in Proposition \ref{prop:sup_norm_bound}),
it is clear that for $\Re(\zeta)\geq 0$ the series
	$$\sum_{\lambda_j\geq \lambda_0}w(\zeta,t_j)\vp_j(z)\overline{\vp_j}(w)^t$$
	converges absolutely and uniformly on $\mathcal F\times \mathcal F$.
The same holds for 	the integral
	$$\int_{-\infty}^{\infty}w(\zeta,|r|)E_{j,l}(z,1/2+ir)\overline{E_{j,l}}(w,1/2+ir)^tdr,$$
	for any pair $(j,l)$ with $1\leq j\leq c_{\Gamma}$ and $1\leq l\leq m_j$, when $\G$ is non-compact.

	Therefore, for every arbitrary fixed $\zeta \in \mathbb C$ with Re$(\zeta)\geq 0$,
	$\widetilde{W}(\zeta;z,w)$ is a well-defined matrix-valued function which converges absolutely and uniformly on $\mathcal{F}\times \mathcal{F}$.
	Moreover, any of the first $4$ derivatives with respect to $\zeta$ of $\widetilde{W}(\zeta;z,w)$ converges uniformly and absolutely, provided
	$\rm{Re}(\zeta)>0$. Therefore, term by term differentiation is valid. By differentiating component-wise four times and using the fact that
	$\frac{d^4}{d\zeta^4}w(\zeta,t)=e^{-\zeta t}$, we obtain that
	\begin{align}
	\frac{d^4}{d\zeta^4}\widetilde{W}(\zeta;z,w)=&\sum_{\lambda_0\leq \lambda_j<1/4}e^{-\zeta \sqrt{\lambda_j-1/4}}\vp_j(z)\overline{\vp_j}(w)^t
	+\sum_{\lambda_j\geq 1/4}e^{-\zeta t_j}\vp_j(z)\overline{\vp_j}(w)^t \notag \\
	& +
	\frac{1}{4\pi}\sum_{j=1}^{c_{\G}}\sum_{l=1}^{m_j}\int_{-\infty}^{\infty}e^{-\zeta |r|}E_{jl}(z,1/2+ir)\overline{E_{jl}}(w,1/2+ir)^tdr \notag\\
	=&\ \mathcal{P}_{M,1/4}(\zeta;z,w),\notag
	\end{align}
	where $\mathcal{P}_{M,1/4}$ is defined in \eqref{pk}.
	For Re$(\zeta)>0$, define
	\begin{equation}
	\mathcal{P}^{(k)}(\zeta;z,w)=
	\begin{dcases}
	\mathcal{P}_{M,1/4}(\zeta;z,w), &\text{if $ k=0$ and}\\
	\int_{0}^{\zeta}   \mathcal{P}^{(k-1)}(\xi;z,w)d\xi, & \text{if $k\geq 1.$}
	\end{dcases}
	\end{equation}
	In the above definition, the integral is taken component-wise over a ray contained in the upper half-plane Re$(\zeta)>0$.
	With this definition, we have
	\begin{equation}\label{q}
	\mathcal{P}^{(4)}(\zeta;z,w)=\widetilde{W}(\zeta;z,w)+q(\zeta;z,w),
	\end{equation}
	where $q(\zeta;z,w)$ is a $d\times d$ matrix-valued function consisting of degree 3 polynomials in $\zeta$, with coefficients depending on $z$ and $w$
	at each component.
	For $z\neq w$ and $\zeta \rightarrow 0$, the function $\mathcal{P}^{(0)}(\zeta;z,w)$ has a limit; therefore,
	\begin{equation}\label{ep}
	\mathcal{P}^{(k)}(\zeta;z,w)=O(\zeta^k) ~~\text{as}~~\zeta\rightarrow 0.
	\end{equation}
	For every $u\in \mathbb{R}^+$, define
	\begin{equation}\label{mf}
	 W(u;z,w)=\frac{1}{2i}\left[\left(\widetilde{W}(iu;z,w)+q(iu;z,w)\right)+\left(\widetilde{W}(-iu;z,w)+q(-iu;z,w)\right)\right].
	\end{equation}
	We claim that the function $W(u;z,w)$ satisfies all the required conditions given in the statement.
	Using the spectral expansion \eqref{sepk} of the Poisson kernel $\mathcal{P}_{M,1/4}(\zeta;z,w)$ and integrating it four times,
	we obtain the property $(a)$.
	Assertion $(b)$ follows using the bound \eqref{ep} in \eqref{q}. For a given $g$ as in the statement, we can derive assertion $(c)$ using integration by parts four times on the right-hand side of \eqref{ir}.
\end{proof}

\section{The basic automorphic kernel}\label{sec:kernel construction}

In this section, we study two automorphic kernels, namely the basic and the geometric automorphic kernels. After defining the basic automorphic kernel $K_s(z,w)$ for any $z,w \in \mathcal F$ in an appropriate complex half $s$-plane in terms of the wave distribution applied to a test function, we prove that it has a meromorphic continuation to the whole complex $s$-plane. Then we introduce the geometric automorphic kernel $\tilde{K}_s(z,w)$ and show that $K_s(z,w)=\tilde{K}_s(z,w)$ for $\Re(s)>\max\{1,|k|\}$, thus also obtaining the meromorphic continuation of $\tilde{K}_s(z,w)$ to the whole complex $s$-plane.

\subsection{Construction and meromorphic continuation of the basic automorphic kernel}
For $z,w \in \mathcal F$ and $s\in \CC$ with $\Re(s)>\max\{1,|k|\}$, we define the basic automorphic kernel $K_s(z,w)$ by
\begin{equation}\label{basic aut kernel}
K_s(z,w):=\frac{\Gamma(s-\frac{1}{2})}{\Gamma(s)}\mathcal W_{M,k,\chi}(z,w)\left(\cosh(u)^{-(s-\frac{1}{2})}\right).
\end{equation}
Here, $\mathcal W_{M,k,\chi}(z,w)$ is the wave distribution and it is applied to the test function
\begin{equation}\label{g_s-fun}
g_s(u)=\frac{\Gamma(s-\frac{1}{2})}{\Gamma(s)}\cosh (u)^{-(s-\frac{1}{2})}.
\end{equation}
Notice that $g_s(u)$ satisfies the conditions in Lemma \ref{lem:cond-g}.$(a)$ with $a=A$ 
and $n=4$ where $A$ is defined in \eqref{eq:A}. Thus, $K_s(z,w)$ is well-defined by Proposition \ref{prop:wave-dist}.$(a)$.

\begin{lemma}\label{H-relation}
	For all $s\in \CC$ with $\Re(s)>\max\{1,|k|\}$, $n \in \mathbb{N} $, and $r \in\mathbb R\cup [-Ai,Ai]$, the Fourier transform $H(r,\cdot)$ (see \eqref{H-Fourier}) of $g_s$  given by \eqref{g_s-fun} satisfies the functional equation
	\begin{equation}\label{H-rel.id.}
	 H(r,g_s)=\frac{2^{-2n}(s)_{2n}}{\left(\frac{s}{2}-\frac{1}{4}-\frac{ir}{2}\right)_n\left(\frac{s}{2}-\frac{1}{4}+\frac{ir}{2}\right)_n}H(r,g_{s+2n}),
	\end{equation}
	where $(.)_n$ denotes the Pochhammer symbol.
\end{lemma}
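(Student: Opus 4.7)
My plan is to compute $H(r,g_s)$ in closed form and then read off \eqref{H-rel.id.} as an immediate consequence of the shift identities for the Gamma function.

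First I would verify the convergence of the Fourier integral defining $H(r,g_s)$ for $r\in\RR\cup[-Ai,Ai]$. Since $\cosh(u)^{-(s-1/2)}$ decays like $2^{s-1/2}e^{-|u|(s-1/2)}$ as $|u|\to\infty$, and $|e^{iru}|=e^{-\Im(r)u}$, the integral converges absolutely as long as $|\Im(r)|<\Re(s)-1/2$. This holds because $\Re(s)>\max\{1,|k|\}$ yields $\Re(s)-1/2>\max\{1/2,|k|-1/2\}=A\geq|\Im(r)|$, which also allows us to move $r$ into the complex strip by analytic continuation.

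Next I would apply the classical integral
\begin{equation*}
\int_{-\infty}^{\infty}\frac{e^{iru}}{\cosh(u)^{\alpha}}\,du=\frac{2^{\alpha-1}}{\Gamma(\alpha)}\,\Gamma\!\left(\tfrac{\alpha+ir}{2}\right)\Gamma\!\left(\tfrac{\alpha-ir}{2}\right),
\end{equation*}
valid for $\Re(\alpha)>|\Im(r)|$, with $\alpha=s-\tfrac12$. Combining with the scaling factor in \eqref{g_s-fun} and cancelling $\Gamma(s-1/2)$ gives the clean expression
\begin{equation*}
H(r,g_s)=\frac{2^{\,s-1/2}}{\Gamma(s)}\,\Gamma\!\left(\tfrac{s}{2}-\tfrac{1}{4}+\tfrac{ir}{2}\right)\Gamma\!\left(\tfrac{s}{2}-\tfrac{1}{4}-\tfrac{ir}{2}\right).
\end{equation*}

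With this closed form in hand, \eqref{H-rel.id.} is a routine manipulation. Writing $a_{\pm}:=\tfrac{s}{2}-\tfrac14\pm\tfrac{ir}{2}$ and replacing $s$ by $s+2n$ shifts $a_{\pm}\mapsto a_{\pm}+n$. Applying the Pochhammer identities $\Gamma(a_{\pm}+n)=(a_{\pm})_n\Gamma(a_{\pm})$ and $\Gamma(s+2n)=(s)_{2n}\Gamma(s)$ together with $2^{s+2n-1/2}=2^{2n}\cdot 2^{s-1/2}$, I obtain
\begin{equation*}
H(r,g_{s+2n})=\frac{2^{2n}(a_+)_n(a_-)_n}{(s)_{2n}}\,H(r,g_s),
\end{equation*}
which, upon inversion, is exactly \eqref{H-rel.id.}. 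I do not expect a real obstacle here; the only subtle point is justifying the integral formula on the full range $r\in\RR\cup[-Ai,Ai]$, which I would handle by first establishing it for real $r$ (e.g.\ via the substitution $t=e^{-2u}$, recognising a Beta integral, and using $B(x,y)=\Gamma(x)\Gamma(y)/\Gamma(x+y)$) and then invoking analytic continuation in $r$ on the strip $|\Im(r)|<\Re(s)-1/2$, both sides being holomorphic there.
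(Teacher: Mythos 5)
Your proof is correct and follows essentially the same route as the paper: both evaluate $H(r,g_s)$ in closed form via the classical Fourier transform of $\cosh(u)^{-\nu}$ (the paper cites the corresponding Gradshteyn--Ryzhik formulas for $r$ real, purely imaginary, and zero, valid for $\Re(\nu)>A$) and then deduce \eqref{H-rel.id.} from the Gamma/Pochhammer shift identities. The only slip is harmless: with $\alpha=s-\tfrac12$ the prefactor in your closed form should be $2^{s-3/2}$ rather than $2^{s-1/2}$, a constant factor that cancels in the ratio of $H(r,g_{s+2n})$ to $H(r,g_s)$ and so does not affect the functional equation.
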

\begin{proof}
	Let $n \in \mathbb{N}$ and $s \in \CC$ with $\Re(s)>\max\{1,|k|\}$. Definitions \eqref{H-even} and \eqref{g_s-fun} imply that
	\[
	 H(r,g_{s})=\frac{2\G(s-\frac{1}{2})}{\G(s)}\int_{0}^{\infty}\cos(ur)\cosh(u)^{-(s-1/2)}du.
	\]
	When $r\in\RR\setminus\{0\}$ or $r\in[-Ai,Ai]\setminus\{0\}$, or $r=0$, we have (see \cite[Formulas 3.985.1, 3.512.1 and 3.512.2, respectively]{GR07})
	\begin{equation}\label{H-int-id}
	 \int_{0}^{\infty}\cos(ur)\cosh(u)^{-v}du=\frac{2^{\nu-2}}{\G(\nu)}\G\left(\frac{\nu-ir}{2}\right)\G\left(\frac{\nu+ir}{2}\right),
	\end{equation}
	where $\Re(\nu)>A$.  Hence, for $r\in \RR\cup[-Ai,Ai]$, using \eqref{H-int-id} with $\nu=s-1/2$, $\nu=s-1/2+2n$ and the Pochhammer symbol $(s)_n=\frac{\G(s+n)}{\G(s)}$, we obtain the identity \eqref{H-rel.id.}.
\end{proof}

The functional equation \eqref{H-rel.id.} enables us to deduce the meromorphic continuation of the kernel $K_s(z,w)$ to the whole complex $s$-plane.

\begin{theorem}\label{merom.cont.}
	For any $z,w \in \mathcal F$, the basic automorphic kernel $K_s(z,w)$ admits a meromorphic continuation to the whole complex $s$-plane.  The possible poles of the function $\G(s)\G(s-1/2)^{-1}K_s(z,w)$ are located at the points $s=1/2\pm it_j-2n$, where $n\in\NN$ and $\lambda_j=1/4+t_j^2$ is a discrete eigenvalue of $\Delta_k$. When $M$ is non-compact, possible poles of $K_s(z,w)$ are also located at the points $s=1-\rho-2n$, where $n\in\NN$ and $\rho\in(1/2,1]$ is a pole of the parabolic Eisenstein series $E_{jl}(z,s)$, and the points $s=\rho-2n$, where $n\in\NN$ and $\rho$ is a pole of $E_{jl}(z,s)$ with $\Re(\rho)<1/2$.
\end{theorem}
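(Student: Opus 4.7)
The plan is to apply Lemma~\ref{H-relation} to the spectral expansion of $K_s(z,w)=\mathcal{W}_{M,k,\chi}(z,w)(g_s)$ so as to re-express $K_s$ in a form that manifestly extends meromorphically to arbitrarily large half-planes. Starting from the expansion
\begin{equation*}
K_s(z,w)=\sum_j H(t_j,g_s)\varphi_j(z)\overline{\varphi_j}(w)^t+\frac{1}{4\pi}\sum_{j,l}\int_{-\infty}^\infty H(r,g_s)E_{jl}(z,\tfrac12+ir)\overline{E_{jl}}(w,\tfrac12+ir)^t\,dr,
\end{equation*}
valid for $\Re(s)>\max\{1,|k|\}$ by Proposition~\ref{prop:wave-dist}(a), I would substitute the identity of Lemma~\ref{H-relation} termwise (and inside the Eisenstein integrand) to rewrite, for any $n\in\NN$,
\begin{equation*}
K_s(z,w)=2^{-2n}(s)_{2n}\bigg(\sum_j\frac{H(t_j,g_{s+2n})\,\varphi_j(z)\overline{\varphi_j}(w)^t}{(\tfrac{s}{2}-\tfrac14-\tfrac{it_j}{2})_n(\tfrac{s}{2}-\tfrac14+\tfrac{it_j}{2})_n}+\frac{1}{4\pi}\sum_{j,l}\int_{-\infty}^\infty\frac{H(r,g_{s+2n})\,E_{jl}\overline{E_{jl}}^t\,dr}{(\tfrac{s}{2}-\tfrac14-\tfrac{ir}{2})_n(\tfrac{s}{2}-\tfrac14+\tfrac{ir}{2})_n}\bigg).
\end{equation*}

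For each fixed $n$, I would then verify that the right-hand side defines a meromorphic function of $s$ on the half-plane $\Re(s)>\max\{1,|k|\}-2n$. On this half-plane one has $\Re(s+2n)>\max\{1,|k|\}$, so $g_{s+2n}$ satisfies the hypotheses of Proposition~\ref{prop:wave-dist}(a); together with the additional polynomial decay of order $(1+|r|)^{-n}$ contributed by the Pochhammer factors in the denominators, this gives absolute convergence of both the discrete series and the Eisenstein integral, uniformly on compact subsets that avoid the zero loci of the denominators. Since the formulas for different values of $n$ agree by analytic continuation on the common domain $\Re(s)>\max\{1,|k|\}$, they glue together into the desired meromorphic extension of $K_s(z,w)$ to all of $\CC$.

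For the discrete poles, I would multiply by $\Gamma(s)/\Gamma(s-\tfrac12)$ and recall, from the proof of Lemma~\ref{H-relation}, the explicit formula $H(r,g_u)=2^{u-3/2}\Gamma(u)^{-1}\Gamma(\tfrac{u-1/2-ir}{2})\Gamma(\tfrac{u-1/2+ir}{2})$. A direct $\Gamma$-function analysis shows that $\Gamma(s)\Gamma(s-\tfrac12)^{-1}H(t_j,g_{s+2n})$ is holomorphic for $\Re(s)>\tfrac12-2n$, while $(s)_{2n}$ is polynomial. Hence the only discrete poles of $\Gamma(s)\Gamma(s-\tfrac12)^{-1}K_s(z,w)$ in this half-plane come from the zeros of $(\tfrac{s}{2}-\tfrac14\mp\tfrac{it_j}{2})_n$, namely $s=\tfrac12\pm it_j-2m$ for $m=0,\dots,n-1$; letting $n\to\infty$ exhausts the stated set. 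In the non-compact case, the meromorphic continuation of the Eisenstein integral past each critical line $\Re(s)=\tfrac12-2m$ requires a contour shift in $r$ across the zeros $r=\pm i(s-\tfrac12+2m)$ of the Pochhammer denominators as well as across the poles $r=-i(\rho-\tfrac12)$ of $E_{jl}(z,\tfrac12+ir)$. Evaluating the Eisenstein factors at the corresponding values $1/2+ir=s+2m$, $1-s-2m$, or $\rho$, the cumulative residue contributions produce exactly the claimed pole locations $s=1-\rho-2n$ and $s=\rho-2n$.

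The principal difficulty will be the Eisenstein contour-shift analysis: one must track the residues carefully, confirm that they assemble into the same meromorphic function produced by the functional-equation formula, and identify exactly which pole positions are realized by the residue evaluations of the Eisenstein series. The discrete part, in contrast, reduces to routine $\Gamma$-function bookkeeping once Lemma~\ref{H-relation} has been invoked.
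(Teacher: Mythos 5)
Your proposal is correct and follows essentially the same route as the paper's own proof: rewrite the spectral expansion via the functional equation of Lemma \ref{H-relation}, continue half-plane by half-plane in steps of $2n$, read off the discrete poles from the zeros of the Pochhammer factors $h_n(r,s)$, and continue the Eisenstein integral across the lines $\Re(s)=\tfrac12-2m$ by a residue/contour-shift argument picking up the stated poles at $s=1-\rho-2n$ and $s=\rho-2n$. The step you flag as the principal difficulty is exactly the one the paper also treats by the residue theorem, referring to \cite{JKvP10}, \cite{JvPS16} and \cite{vP10} for the details.
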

\begin{proof} The proof we present here follows closely the proof of \cite[Theorem 10]{JvPS16}. We assume $M$ is non-compact; in case of cocompact $\G$, the sums over cusps below are identically zero and there are no poles stemming from poles of the Eisenstein series.

Let $B:=\max\{1,|k|\}$.  First we prove that $K_s(z,w)$ has a meromorphic continuation to the half-plane $\Re(s)>B-2n$ for any $n\in\NN$.  For $s\in \CC$ with $\Re(s)>B$ we use the wave representation \eqref{def: wave distr} of $K_s(z,w)$ to obtain 
	\begin{equation} \label{K_s wave rep.}
	\begin{split}
	K_s(z,w)=& \sum_{\lambda_j\geq|k|(1-|k|)}H(t_j,g_s)\vp_j(z)\overline{\vp_j}(w)^t\\
	&+\frac{1}{4\pi}\sum_{j=1}^{c_{\G}}\sum_{l=1}^{m_j}\int_{-\infty}^{\infty}H(r, g_s)E_{jl}(z, 1/2 + ir)\overline{E_{jl}}(w, 1/2 + ir)^tdr,
	\end{split}
	\end{equation}
	where $\lambda_j=1/4+t_j^2$.
	
	Letting $h_n(r,s):={\left(\frac{s}{2}-\frac{1}{4}-\frac{ir}{2}\right)_n\left(\frac{s}{2}-\frac{1}{4}+\frac{ir}{2}\right)_n}$ and using formula \eqref{H-rel.id.} in \eqref{K_s wave rep.}, we get
	\begin{equation} \label{K_s wave rep.2}
	\begin{split}
	\frac{2^{2n}\G(s)}{\G(s+2n)}K_s(z,w)=& \sum_{\lambda_j\geq|k|(1-|k|)}\frac{H(t_j,g_{s+2n})}{h_n(t_j,s)}\vp_j(z)\overline{\vp_j}(w)^t\\
	 &+\frac{1}{4\pi}\sum_{j=1}^{c_{\G}}\sum_{l=1}^{m_j}\int_{-\infty}^{\infty}\frac{H(r,g_{s+2n})}{h_n(r,s)}E_{jl}(z, 1/2 + ir)\overline{E_{jl}}(w, 1/2 + ir)^tdr.
	\end{split}
	\end{equation}
	
	It can be easily seen that $(a)_n=\prod _{j=0}^{n-1}(a+j)$.  This implies that $h_n(r,s)\sim r^{2n}$ as $r\rightarrow \infty$.  Hence, the series in \eqref{K_s wave rep.2} arising from the discrete spectrum is locally absolutely and uniformly convergent as a function of $s$ for $\Re(s)>B-2n$ away from the poles of $h_n(r,s)^{-1}$, i.e. away from the zeros of $h_n(r,s)$.  Using  $(a)_n=\prod _{j=0}^{n-1}(a+j)$, we calculate the zeros of $h_n(r,s)$ for $\Re(s)>B-2n$, which occur at the points $s=1/2\pm it_j-2m$ for $m=0,\dots,n-1$.
	
	Next, we prove the meromorphic continuation of the integral coming from the continuous spectrum in \eqref{K_s wave rep.2}.  First, substitute $r\mapsto 1/2+ir$ so that the integral is now over the vertical line whose real part is $1/2$ and 	observe that as a function of $s\in\mathbb{C}$ the integral, denote it by $I_{1/2,jl}(s)$, is holomorphic for $s\in \CC$ with $\Re(s)>B-2n$ satisfying $\Re(s)\neq 1/2-2m$, where $m=0,\dots,n-1$.
	In order to get the meromorphic continuation of this function across the lines $\Re(s)=1/2-2m$, we will use the same method applied in the proofs of \cite[Theorem 2]{JKvP10} and \cite[Theorem 10]{JvPS16} or in \cite{vP10}.
	
	As a first step, let $m=0$ and choose $\epsilon>0$ sufficiently small to guarantee that $E_{jl}(z,s)$ has no poles in the strip $1/2-\epsilon<\Re(s)<1/2+\epsilon$.  For $s\in\mathbb{C}$ with $1/2 < \Re(s) < 1/2 + \epsilon$, we apply the residue theorem to the function $I_{1/2,jl}(s)$ to obtain the meromorphic continuation $I_{1/2,jl}^{(1)}(s)$ of it in the strip $1/2-\epsilon<\Re(s)<1/2+\epsilon$.  Then, assuming $1/2-\epsilon<\Re(s)<1/2$ and using the residue theorem again, we get the meromorphic continuation $I_{1/2,jl}^{(2)}(s)$ of the integral $I_{1/2,jl}^{(1)}(s)$ to the strip $-3/2<\Re(s)<1/2$. Finally, adding the formulas coming from the applications of the residue theorem we obtain the meromorphic continuation of the integral $I_{1/2,jl}(s)$ to the strip $-3/2<\Re(s)\leq 1/2$.  Similarly, we can get the meromorphic continuation of the integral $I_{1/2,jl}(s)$ to the strip $-3/2-2m<\Re(s)\leq 1/2-2m$ for $m=1,\dots,n-1$ by repeating this two-step process.  The poles that arise in this process are at $s=1-\rho-2m$, where $\rho$ is a pole of the Eisenstein series $E_{jl}(z,s)$ belonging to the line segment $(1/2,1]$, and at $s=\rho-2m$, where $\rho$ is a pole of the Eisenstein series $E_{jl}(z,s)$ such that $\Re(s)<1/2$, and $m=0,\dots,n-1$.
	
This completes the proof of the meromorphic continuation of $K_s(z,w)$ to the whole $s$-plane, as $n\in\NN$ was chosen arbitrarily.
\end{proof}

\subsection{The geometric automorphic kernel}

For $\Re(s)$ sufficiently large and for any two points $z,w\in \mathcal F$, define the geometric automorphic kernel by
\begin{equation}\label{geometric aut kernel}
\begin{split}
\tilde K_s(z,w):=&\frac{\Gamma(s-k)\Gamma(s+k)}{\sqrt{2\pi}\Gamma(s)^2}\sum_{\gamma\in\widetilde{\Gamma}}\chi(\gamma)\cosh(d_{\hyp}(z,\gamma w))^{-s}\\
&\times F(-k,k;s;(1+\cosh(d_{\hyp}(z,\gamma w)))^{-1})J_{\gamma,k}(w)H_k(z,\gamma w),
\end{split}
\end{equation}
where $F(-k,k;s;(1+\cosh(d_{\hyp}(z,\gamma w)))^{-1})$ stands for the (Gauss) hypergeometric function.

Note that it is possible to extend the above definition to $z,w\in\HH$. Then, for any fixed $w\in \HH$, the function $\tilde K_s(z,w)$ can be viewed as a map from $\HH$ to $\mathrm{End}(V)$ (which can be identified with $\CC^{d\times d}$). The following proposition shows that (for sufficiently large $\mathrm{Re}(s)$) for any fixed $w\in \HH$, the columns of the $d\times d$ matrix $\tilde K_s(z,w)$ belong to the space $\mathcal{H}_k$ (see Section \ref{sec:spectral exp}), when viewed as maps from $\HH$ to $V$.

\begin{proposition}\label{prop:geometric kernel def}
\begin{itemize}
\item[$(a)$]The series in formula \eqref{geometric aut kernel} converges normally with respect to the operator norm in the ring $\mathrm{End}(V)$ of endomorphisms of $V$ in the variables $(z,w;s)$, with $s$ in the half-plane $\Re(s)>1$ and $z,w\in \mathcal{F}$.
It defines a holomorphic function of $s$ in the half-plane $\Re(s)>1$. The convergence is uniform when $z,w\in \mathcal{F}$ are restricted to any compact subset of $\mathcal{F}$.

\item[$(b)$] The kernel $\tilde K_s(z,w)$ is a meromorphic function of $s$ in the half-plane $\Re(s)>1$, possessing simple poles in this half-plane only when $|k|>1$. When $|k|>1$, the simple poles are located at $s=|k|-n$, for integers $n\in[0,|k|-1]$.

\item[$(c)$]
For each $w\in \HH$ and $s\in\CC$ with $\Re(s)>\max\{1,|k|\}$ each column of the matrix $\tilde K_{s}(\cdot,w)$ defines a function in $\mathcal{H}_k$.
\end{itemize}
\end{proposition}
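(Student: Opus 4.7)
The plan is to handle the three parts in sequence, with the bulk of the work being the convergence estimate in part (a) and the change of variables in part (c).

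For part (a), I would fix $(z,w)$ in a compact subset of $\mathcal F\times\mathcal F$ and $s$ in a compact subset of $\{s\in\CC\,|\,\Re(s)>1\}$ avoiding the poles of the prefactor, and bound each summand independently. Since $\chi(\gamma)$ is unitary and $|J_{\gamma,k}(w)H_k(z,\gamma w)|=1$ throughout $\HH^2$ (by Lemma \ref{lemma-H_k-definition}), the operator-norm modulus of the $\gamma$-th term is at most $\cosh(d_{\hyp}(z,\gamma w))^{-\Re(s)}\cdot|F(-k,k;s;(1+\cosh(d_{\hyp}(z,\gamma w)))^{-1})|$. Because $(1+\cosh(d_{\hyp}(z,\gamma w)))^{-1}\in(0,1/2]$ uniformly, the defining hypergeometric series converges absolutely and its values are uniformly bounded in $s$ on compacta. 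The remaining sum $\sum_{\gamma}\cosh(d_{\hyp}(z,\gamma w))^{-\Re(s)}$ would then be estimated by the Stieltjes integral $\int_0^\infty\cosh(\rho)^{-\Re(s)}\,dN(\rho;z,w)$, using the lattice-point bound $N(\rho;z,w)\ll e^\rho$ already employed in the proof of Lemma \ref{heatklemma}; this reduces via $\cosh(\rho)^{-\Re(s)}\ll e^{-\rho\Re(s)}$ to the elementary integral $\int_0^\infty e^{-\rho(\Re(s)-1)}\,d\rho$, which converges precisely for $\Re(s)>1$. Normal convergence of the series together with termwise holomorphy in $s$ then gives holomorphy of $\tilde K_s(z,w)$ away from the prefactor's poles by the Weierstrass convergence theorem.

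For part (b), I would factor out the scalar prefactor $\Gamma(s-k)\Gamma(s+k)(\sqrt{2\pi}\Gamma(s)^2)^{-1}$ and note that the remaining sum is holomorphic on all of $\{s\in\CC\,|\,\Re(s)>1\}$ by part (a), since $F(-k,k;s;\zeta)$ has $s$-poles only at non-positive integers (none of which lie in this half-plane) and $\cosh(d_{\hyp}(z,\gamma w))^{-s}$ is entire in $s$. All possible poles of $\tilde K_s$ in the half-plane therefore come from the prefactor: $\Gamma(s-k)$ contributes simple poles at $s=k-n$, $n\in\NN_0$ with $k-n>1$, and $\Gamma(s+k)$ contributes simple poles at $s=-k-n$ with $-k-n>1$. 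Intersecting with $\{\Re(s)>1\}$ yields the stated simple poles $s=|k|-n$ with $n\in[0,|k|-1]$, which is non-empty precisely when $|k|>1$.

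For part (c), I would verify the two defining properties of $\mathcal{H}_k$ separately. The transformation law for each column of $\tilde K_s(\cdot,w)$ as a $V$-valued function is a direct application of Lemma \ref{L:point-pair} to the weight $k$ point-pair invariant $\mu(z,w):=H_k(z,w)\cdot\cosh(d_{\hyp}(z,w))^{-s}F(-k,k;s;(1+\cosh(d_{\hyp}(z,w)))^{-1})$; the second factor depends only on $d_{\hyp}(z,w)$ and is therefore of weight zero, while $H_k$ is of weight $k$ by Lemma \ref{lemma-H_k-definition}, and the required absolute convergence is supplied by part (a). For square-integrability, I would rewrite $\mu$ in the framework of Section \ref{sec:aut kernel} via the substitution $t:=|z-w|^2/(\Im(z)\Im(w))=2(\cosh(d_{\hyp}(z,w))-1)$, obtaining $\mu(z,w)=H_k(z,w)\Phi(t)$ with $\Phi(t):=(1+t/2)^{-s}F(-k,k;s;2/(4+t))$. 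Using the Leibniz rule combined with the boundedness of $F(-k,k;s;\cdot)$ and its $\zeta$-derivatives on $[0,1/2]$ and the elementary estimate $\left|\tfrac{d^\ell}{dt^\ell}(2/(4+t))\right|\ll(4+t)^{-\ell-1}$, I would verify that $|\Phi^{(\ell)}(t)|\ll(t+4)^{-\Re(s)-\ell}$ for $\ell=0,1,2,3,4$. For $\Re(s)>\max\{1,|k|\}$, choosing $\delta:=\Re(s)-\varepsilon>\max\{1,|k|\}$ for small $\varepsilon>0$ puts $\Phi$ into the regime of Section \ref{sec:aut kernel}, whose introductory discussion asserts that the corresponding $K_\Gamma$ belongs to $\tilde D_k\subseteq\mathcal H_k$ as a function of $z$. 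Since $\tilde K_s(\cdot,w)$ equals $K_\Gamma(\cdot,w)$ up to the $z$-independent scalar $2\Gamma(s-k)\Gamma(s+k)(\sqrt{2\pi}\Gamma(s)^2)^{-1}$, each of its columns lies in $\mathcal H_k$.

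\emph{The main technical obstacle} I anticipate is part (c), specifically the bookkeeping required to carry the derivative estimates on $\Phi$ through the hypergeometric function and the rational substitution $2/(4+t)$. Parts (a) and (b) are essentially standard: convergence of Poincaré-type series via lattice-point counting and a direct read-off of poles from the gamma prefactor, respectively.
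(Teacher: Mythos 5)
Your proposal is correct and follows essentially the same route as the paper: part (a) by uniform boundedness of the hypergeometric factor and domination by $\sum_{\gamma}\cosh(d_{\hyp}(z,\gamma w))^{-\Re(s)}$, part (b) by reading off the poles of the gamma prefactor, and part (c) by verifying the derivative bounds $|\Phi_s^{(\ell)}(t)|\ll(4+t)^{-\Re(s)-\ell}$ and invoking the Hejhal-based framework of Section \ref{sec:aut kernel}. The only cosmetic differences are that you prove the convergence of the dominating series via the lattice-point count (as in Lemma \ref{heatklemma}) where the paper cites \cite[Lemma 3.3.4]{vP10}, and you make the automorphy of the columns explicit via Lemma \ref{L:point-pair}, whereas the paper subsumes it in \cite[Assumption 6.1 and formula (6.11)]{He83}.
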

\begin{proof}
$(a)$ For any $\gamma\in\widetilde{\Gamma}$, the hypergeometric function $F(-k,k;s;(1+\cosh(d_{\hyp}(z,\gamma w)))^{-1})$ is well-defined for all $s\in \mathbb{C}$, due to the fact that
$$0<(1+\cosh(d_{\hyp}(z,w)))^{-1}\leq \tfrac{1}{2}$$
for any two points $z,w\in \mathcal F$
(equality being attained when $z=w$). Moreover, for all $s$ with $\Re(s)>1$, the function $F(-k,k;s;(1+\cosh(d_{\hyp}(z,\gamma w)))^{-1})$ is holomorphic, since it is the sum of uniformly convergent holomorphic functions. For all $s\in\CC$ such that $\Re(s)>1$, it is uniformly bounded by $F\left(-k,k;1,\tfrac{1}{2}\right)$.

Since $\chi$ is a unitary multiplier system and $|J_{\gamma,k}(w)|=|H_k(z,w)|=1$, when $d=1$, the series appearing in the definition of the kernel $\tilde K_s(z,w)$ is dominated (uniformly in $s$, for $\Re(s)>1$) by the series
\begin{equation}\label{dominating sum}
\sum_{\gamma\in\widetilde{\Gamma}}\cosh(d_{\hyp}(z,\gamma w))^{-\Re(s)},
\end{equation}
which converges in the half-plane $\Re(s)>1$ (see e.g. \cite[Lemma 3.3.4]{vP10}). The convergence is uniform when $z,w\in \mathcal F$ are restricted to any compact subset of $\mathcal F$.

When $d>1$, in order to prove the normal convergence of the series in \eqref{geometric aut kernel}, it is sufficient to notice that $\chi$ can be identified with a unitary $d\times d$ matrix, with matrix norm induced from the Hilbert space norm obviously equal to $\sqrt{d}$. The normal convergence follows again from the convergence of the series \eqref{dominating sum} for $\Re(s)>1$, which is uniform when $z,w\in \mathcal F$ are restricted to any compact subset of $\mathcal F$. This proves part $(a)$ of the Proposition.

$(b)$ From part $(a)$ it follows that the sum over $\widetilde{\Gamma}$ on the right-hand side of \eqref{geometric aut kernel} is a holomorphic function in $s$, for $\mathrm{Re}(s)>1$. Therefore the poles of $\tilde K_s(z,w)$ in the half-plane $\mathrm{Re}(s)>1$ stem only from possible poles of the factor $\Gamma(s-k)\Gamma(s+k)/\Gamma(s)^2$. This factor can have poles in the half-plane $\mathrm{Re}(s)>1$ only when $|k|>1$, and they are located at $s=|k|-n$, for integers $n\in[0,|k|-1]$.

(c) To prove the last part, use \cite[formula (6.11) on p. 387]{He83} and note that $L_2(\Gamma\backslash\HH, m, \mathcal{W})=\mathcal{H}_k$ in Hejhal's notation. Therefore, it suffices to show that the function \begin{equation}\label{phi formula}
\Phi_s(4u):=\sqrt{\frac{2}{\pi}}\cdot\frac{\Gamma(s-k)\Gamma(s+k)}{\Gamma(s)^2}(1+2u)^{-s}F(-k,k;s;\tfrac{1}{2(1+u)}),
\end{equation}
where $u=u(z,w)$ is given by \eqref{def_u}, satisfies \cite[Assumption 6.1 on p. 387]{He83}.

It is clear from the definition of
$F(-k,k;s;\tfrac{1}{2(1+u)})$ that it is four times differentiable in $u\geq 0$ and its derivatives are uniformly bounded by $F\left(-k,k;1,\tfrac{1}{2}\right)$. 
The function $F_s(u)=(1+2u)^{-s}$ is also four times differentiable as a function of $u$ for any fixed $s\in\CC$ with $\Re(s)>\max\{1,|k|\}$ and satisfies the bound
$$\left|F_s^{(j)}(u)\right| \ll (1+u)^{-j-\Re(s)},$$ for $j=0,1,2,3,4$, where the implied constant is independent of $u$. It follows that $\Phi_s(t)$ is four times differentiable as a function of the real parameter $t\geq 0$. Furthermore, for every $s\in\CC$ such that $\Re(s)>\max\{1,|k|\}$, the estimate
$$\left|\Phi_s^{(j)}(t)\right| \ll (4+t)^{-j-\Re(s)}$$ holds for $j=0,1,2,3,4$ and $t\geq 0$. Thus, the proof is complete.
\end{proof}

According to Proposition \ref{prop:geometric kernel def}, for any fixed complex number $s$ with $\Re(s)>\max\{1,|k|\}$, the kernel $\tilde{K}_s(z,w)$ can be viewed as a map from $\mathcal F\times \mathcal F$ to $\mathrm{End}(V)$.
In the following proposition we prove that for all such $s$, automorphic kernels $\tilde K_s(z,w)$ and $K_s(z,w)$ are equal on $\mathcal F\times \mathcal F$.

\begin{proposition}
For all $s\in\mathbb{C}$ with $\Re(s)>\max\{1,|k|\}$ and for all $z, w\in\mathcal F$,
$$ \tilde K_s(z,w)\:=\: K_s(z,w).$$
\end{proposition}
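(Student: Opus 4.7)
The idea is to apply Proposition~\ref{prop:wave-dist}(b) directly. Once I verify that $g_s$ satisfies its hypotheses, I obtain $K_s(z,w) = K_{\Gamma,\Phi_s}(z,w)$, where $\Phi_s$ is the inverse Selberg--Harish-Chandra transform of $H(\cdot,g_s)$; it then suffices to identify $\Phi_s$ with the weight-zero factor in the summand defining $\tilde K_s(z,w)$. For the hypothesis check, $g_s^{(j)}(u)\,e^{|u|(A+\eta)}$ decays like $\cosh(u)^{-(\Re(s)-1/2)}e^{|u|(A+\eta)}$ and is therefore dominated by an integrable function whenever $\Re(s) - 1/2 > A + \eta$, which is automatic for $\Re(s) > \max\{1,|k|\} = A + 1/2$ and $\eta > 0$ sufficiently small; thus Lemma~\ref{lem:cond-g}(b) with $n = 4$ applies.

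\textbf{Computing $\Phi_s$.} Inverting step~(ii) of the Selberg--Harish-Chandra recipe using $\cosh u = 1 + y/2$ yields
\[
Q_s(y) \;=\; \tfrac{\Gamma(s-1/2)}{\Gamma(s)}(1+y/2)^{-(s-1/2)}, \qquad Q_s'(y) \;=\; -\tfrac{\Gamma(s+1/2)}{2\,\Gamma(s)}(1+y/2)^{-(s+1/2)}.
\]
Plugging into \eqref{Phi-inverse} reduces the problem to evaluating
\[
\mathcal I(x) \;:=\; \int_{-\infty}^\infty (x+t^2+2)^{-(s+1/2)}\left(\tfrac{\sqrt{x+4+t^2}-t}{\sqrt{x+4+t^2}+t}\right)^{\!k}\,dt.
\]
The substitution $t = \sqrt{x+4}\sinh\theta$ collapses the bracket to $e^{-2k\theta}$ and produces $\mathcal I(x) = (x+4)^{-s}\int_{\RR}\cosh^{-2s}\theta\,(1-a\,\mathrm{sech}^2\theta)^{-(s+1/2)}e^{-2k\theta}\,d\theta$ with $a = 2/(x+4)$; further substitutions $v = \tanh\theta$ and then $\tau = (1-v)/2$ give
\[
\mathcal I(x) \;=\; 2^{2s-1}(x+4)^{-s}\int_0^1 \tau^{s+k-1}(1-\tau)^{s-k-1}\,(1 - 4a\tau(1-\tau))^{-(s+1/2)}\,d\tau.
\]

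\textbf{Hypergeometric identification and conclusion.} Expanding $(1-4a\tau(1-\tau))^{-(s+1/2)}$ as a binomial series in $a$ and integrating term by term against the Beta kernel evaluates the last integral as $\tfrac{\Gamma(s+k)\Gamma(s-k)}{\Gamma(2s)}F(s+k,s-k;s;a)$, after collapsing the Pochhammer ratio via $(2s)_{2n} = 4^n(s)_n(s+1/2)_n$. Euler's transformation (noting $c - a - b = s - 2s = -s$) converts this to $\tfrac{\Gamma(s+k)\Gamma(s-k)}{\Gamma(2s)}(1-a)^{-s}F(-k,k;s;a)$. Combining with Legendre's duplication formula for $\Gamma(2s)$ and the identity $(1-a)(x+4) = x+2 = 2(1+x/2)$, one obtains
\[
\Phi_s(x) \;=\; \tfrac{\Gamma(s-k)\Gamma(s+k)}{\sqrt{2\pi}\,\Gamma(s)^2}\,(1+x/2)^{-s}\,F\!\left(-k,k;s;(2+x/2)^{-1}\right).
\]
Since $2 + x/2 = 1 + \cosh d_{\hyp}(z,\gamma w)$ when $x = |z-\gamma w|^2/(\Im z\,\Im(\gamma w))$, this $\Phi_s$ is exactly the weight-zero factor in the definition of $\tilde K_s$, so $K_s(z,w) = \tilde K_s(z,w)$. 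The principal obstacle is the hypergeometric identification of $\mathcal I(x)$; the remaining work is the careful collection of gamma-factor constants, which hinges on the duplication formula and Euler's transformation applied at the right moment.
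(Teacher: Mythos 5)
Your analytic core is sound and in places cleaner than the paper's: the reduction via Proposition \ref{prop:wave-dist}$(b)$ (with the correct observation that $\max\{1,|k|\}=A+\tfrac12$, so Lemma \ref{lem:cond-g}$(b)$ applies to $g_s$), the formulas for $Q_s$ and $Q_s'$, and the resulting integral $\mathcal I(x)$ all agree with what the paper itself arrives at in \eqref{phi integral}. Your evaluation of $\mathcal I(x)$ by the substitution $t=\sqrt{x+4}\sinh\theta$, the Beta integrals, the identity $(2s)_{2n}=4^n(s)_n(s+\tfrac12)_n$ and Euler's transformation is a genuinely different and rather slick route; the paper instead folds the integral, substitutes $x=\frac{\sqrt{\alpha+t^2}-t}{\sqrt{\alpha+t^2}+t}$, recognizes the two pieces as Legendre functions via \cite[Formulas 8.714.2 and 8.704]{GR07}, and finishes with a contiguous relation for the hypergeometric function. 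The paper also proceeds slightly differently at the structural level, first proving in Proposition \ref{prop:geometric kernel def}$(c)$ that $\tilde K_s(\cdot,w)$ lies in $\mathcal H_k$ and then matching spectral coefficients, rather than invoking Proposition \ref{prop:wave-dist}$(b)$ directly; that difference is harmless.

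The genuine gap is in your final sentence. Proposition \ref{prop:wave-dist}$(b)$ identifies $K_s(z,w)$ with the kernel $K_{\Gamma,\Phi_s}$ of \eqref{K Gamma}, which carries the prefactor $\tfrac12$ in front of the sum over $\widetilde\Gamma$ (and the $\pm\gamma$ terms are equal, so this $\tfrac12$ is not cosmetic), whereas the geometric kernel \eqref{geometric aut kernel} is a sum over $\widetilde\Gamma$ with no such factor. Consequently, if your $\Phi_s$ is ``exactly the weight-zero factor in the summand of $\tilde K_s$'', what your argument delivers is $K_s=\tfrac12\tilde K_s$, not the asserted equality. This is exactly why the paper's target \eqref{phi formula} is normalized as $\sqrt{2/\pi}\,\Gamma(s-k)\Gamma(s+k)\Gamma(s)^{-2}(1+2u)^{-s}F(-k,k;s;\tfrac{1}{2(1+u)})$, i.e.\ \emph{twice} the constant $\tfrac{1}{\sqrt{2\pi}}$ you obtain: with that normalization, $\tfrac12\sum_{\widetilde\Gamma}\cdots\Phi_s(4u)$ reproduces \eqref{geometric aut kernel}. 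To close the argument you must either show that the inverse Selberg Harish-Chandra transform of $H(\cdot,g_s)$ is the function \eqref{phi formula} — i.e.\ account for the factor $2$ separating your answer from it, which cannot come from the integral itself (your value is what \eqref{Phi-inverse} gives, and it also matches the paper's own intermediate Legendre-function expressions), but would have to come from the precise normalization of Hejhal's weight-$k$ inversion and expansion formulas cited in Proposition \ref{prop-autom-kernel-expansion} and in the proof of Proposition \ref{prop:geometric kernel def}$(c)$ — or else the conclusion $K_s=\tilde K_s$ does not follow from what you have written. As it stands, the factor-of-two bookkeeping between \eqref{K Gamma}, \eqref{Phi-inverse}, \eqref{phi formula} and \eqref{geometric aut kernel} is the missing step, and you should reconcile it explicitly before asserting the identity.
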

\begin{proof}
In view of Proposition \ref{prop:geometric kernel def}.$(c)$, it suffices to show that, for $\Re(s)>\max\{1,|k|\}$, the functions $\tilde K_s(z,w)$ and $K_s(z,w)$ have the same coefficients in the spectral expansion, which amounts to showing that the function $\Phi_s(4u)$ defined in \eqref{phi formula} is the inverse Selberg Harish-Chandra transform of the Fourier transform $h_s$ of the function
\begin{equation}\label{gs function}
g_s(u)=\frac{\Gamma(s-\tfrac{1}{2})}{\Gamma(s)}(\cosh u)^{-(s-\tfrac{1}{2})}=Q_s(e^u+e^{-u}-2).
\end{equation}
Based on \cite[Formula (6.6) on p. 386]{He83}, this is equivalent to showing that
\begin{equation}\label{phi integral}
\Phi_s(4u)=\frac{\Gamma(s+\tfrac{1}{2})}{\pi\Gamma(s)}2^{s-\tfrac{1}{2}}\int\limits_{-\infty}^{\infty}(4u+t^2+2)^{-(s+\tfrac{1}{2})}
\left(\frac{\sqrt{4u+4+t^2}-t}{\sqrt{4u+4+t^2}+t}\right)^k\, dt.
\end{equation}

Substitute $y=4u$ and denote by $I(y)$ the integral on the right-hand side of \eqref{phi integral}. It follows immediately that
$$
I(y)=\int\limits_{0}^{\infty}(\alpha+t^2-2)^{-(s+\tfrac{1}{2})}\left[
\left(\frac{\sqrt{\alpha+t^2}-t}{\sqrt{\alpha+t^2}+t}\right)^k +\left(\frac{\sqrt{\alpha+t^2}-t}{\sqrt{\alpha+t^2}+t}\right)^{-k}\right]\, dt,
$$
where $\alpha=y+4$. Introducing a new variable $x=\frac{\sqrt{\alpha+t^2}-t}{\sqrt{\alpha+t^2}+t}$, we obtain that
$$
I(\alpha-4)=\alpha^{-s}4^{s-\tfrac{1}{2}}\int\limits_0^1(x^2+2x(1-\tfrac{4}{\alpha})+1)^{-(s+\tfrac{1}{2})}((x^{s+k}+x^{s-k-1})+(x^{s-k}+x^{s+k-1}))dx.
$$
Substituting $x_1=1/x$ in the two integrals containing exponents $x^{s-k-1}$ and $x^{s+k-1}$ yields the following simplified expression:
$$
I(y)=\alpha^{-s}4^{s-\tfrac{1}{2}}\int\limits_0^\infty\frac{x^{s+k} + x^{s-k}}{(x^2+2x\tfrac{y}{y+4}+1)^{s+\tfrac{1}{2}}}dx.
$$

Next, write $I(y)=I(4u)=I_+(4u)+I_-(4u)$, where
$$
I_\pm(4u)= \frac{1}{2}(u+1)^{-s}\int\limits_0^\infty\frac{x^{s\pm k} }{(x^2+2x\tfrac{u}{u+1}+1)^{s+\tfrac{1}{2}}}dx.
$$
The integral on the right-hand side of the above equation appears in Formula 8.714.2 of \cite{GR07} for the integral representation of the Legendre function, with $\cos(\varphi)=u/(u+1)\in(0,1)$, $\mu=s$ and $\nu=\pm k$. For $\Re(s)>\max\{1,|k|\}$, the conditions $\Re(\mu\pm\nu)>0$ are fulfilled, hence
\begin{align*}
I_\pm(4u)&=\frac{1}{2}(u+1)^{-s}\int\limits_0^\infty\frac{x^{s\pm k} }{(x^2+2x\tfrac{u}{u+1}+1)^{s+\tfrac{1}{2}}}dx \\
&= \frac{2^{s-1}\Gamma(s+1)\Gamma(s\pm k +1)\Gamma(s\mp k)}{\Gamma(2s+1)(1+2u)^{s/2}}P_{\pm k}^{-s}\left(\frac{u}{u+1}\right),
\end{align*}
where $P_\nu^\mu$ stands for the Legendre function. Inserting the above expression for $I_\pm(4u)$ into \eqref{phi integral} after applying the doubling formula $\Gamma(2s+1)=2^{2s}\pi^{-\tfrac{1}{2}}\Gamma(s+\tfrac{1}{2})\Gamma(s+1)$ for the gamma function, we obtain that
$$
\Phi_s(4u)=\frac{2^{-\tfrac{3}{2}}}{\sqrt{\pi}}\frac{\Gamma(s+k)\Gamma(s-k)}{\Gamma(s)}\left((s+k)P_k^{-s}\left(\frac{u}{u+1}\right)
+ (s-k)P_{-k}^{-s}\left(\frac{u}{u+1}\right)\right)(1+2u)^{-\tfrac{s}{2}}.
$$
In order to prove \eqref{phi formula}, it is left to show that
$$
(s+k)P_k^{-s}\left(\frac{u}{u+1}\right)+(s-k)P_{-k}^{-s}\left(\frac{u}{u+1}\right)=\frac{2}{\Gamma(s)}
(1+2u)^{-\tfrac{s}{2}} F\left(-k,k;s;\tfrac{1}{2(1+u)}\right).
$$
Apply \cite[Formula 8.704]{GR07}, with $x=u/(u+1)\in(0,1)$, $\mu=-s$ and $\nu=\pm k$, in order to express the Legendre function in terms of the hypergeometric function:
$$
P_{\pm k}^{-s}\left(\frac{u}{u+1}\right)=\frac{(1+2u)^{-\tfrac{s}{2}}}{s\Gamma(s)}F\left(\mp k,\pm k +1; s+1;\tfrac{1}{2(1+u)}\right).
$$
Therefore, proof of \eqref{phi formula} reduces to proving that
\begin{equation*}
\begin{split}
&\frac{1}{s}\left((s+k)F\left(- k, k+1; s+1;\tfrac{1}{2(1+u)} \right) + (s-k)F\left( k, -k+1; s+1;\tfrac{1}{2(1+u)} \right)\right)\\
&= 2F\left(-k,k;s;\tfrac{1}{2(1+u)}\right).
\end{split}
\end{equation*}
The above identity follows immediately from the definition of the hypergeometric function and the property $(a+1)_j=(a)_j\tfrac{a+j}{a}$ of the Pochammer symbol $(a)_j=\Gamma(a+j)/\Gamma(a)$, for all non-negative integers $j$, applied with $a=k$ and $a=-k$.
\end{proof}

\begin{remark}
When $k=0$ and $\chi$ is the identity, the hypergeometric series $F\left(-k,k;s;\tfrac{1}{2(1+u)}\right)$ is identically equal to one, hence the series $\tilde K_s(z,w)$ coincides with the automorphic kernel $K_s(z,w)$ defined in \cite[Formula (19)]{JvPS16}, up to the constant $\frac{1}{\sqrt{2\pi}}$.
\end{remark}

By the uniqueness of meromorphic continuation, combining the above proposition with Theorem \ref{merom.cont.}, we arrive at the following corollary:

\begin{corollary} \label{cor:merom cont}
For any $z,w \in \mathcal F$, the geometric kernel $\tilde{K}_s(z,w)$ admits a meromorphic continuation to the whole complex $s$-plane.  The possible poles of the function $\G(s)\G(s-1/2)^{-1}\tilde{K}_s(z,w)$ are located at the points $s=1/2\pm it_j-2n$, where $n\in\NN$ and $\lambda_j=1/4+t_j^2$ is a discrete eigenvalue of $\Delta_k$. In case when $M$ is non-compact, possible poles of $\tilde{K}_s(z,w)$ are also located at the points $s=1-\rho-2n$, where $n\in\NN$ and $\rho\in(1/2,1]$ is a pole of the parabolic Eisenstein series $E_{jl}(z,s)$, and at the points $s=\rho-2n$, where $n\in\NN$ and $\rho$ is a pole of $E_{jl}(z,s)$ with $\Re(\rho)<1/2$.
\end{corollary}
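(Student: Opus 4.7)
The plan is essentially to combine the preceding proposition with Theorem~\ref{merom.cont.} via the uniqueness of meromorphic continuation. By the preceding proposition, the equality $\tilde{K}_s(z,w) = K_s(z,w)$ holds for every $(z,w) \in \mathcal F \times \mathcal F$ and every $s$ in the half-plane $\Re(s) > \max\{1,|k|\}$. By Theorem~\ref{merom.cont.}, the right-hand side $K_s(z,w)$ already admits a meromorphic continuation to all of $\CC$, with the pole structure stated there. Since two meromorphic functions on $\CC$ that agree on a non-empty open set must coincide wherever both are defined, the meromorphic continuation of $K_s(z,w)$ \emph{is} the (unique) meromorphic continuation of $\tilde{K}_s(z,w)$ from the half-plane of original definition.

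To pin down the pole locations, I would simply transcribe those of $K_s(z,w)$ from Theorem~\ref{merom.cont.}, noting that the multiplicative prefactor $\Gamma(s-1/2)/\Gamma(s)$ relating $g_s(u) = \frac{\Gamma(s-1/2)}{\Gamma(s)}\cosh(u)^{-(s-1/2)}$ to $\cosh(u)^{-(s-1/2)}$ is the reason the statement about discrete-spectrum poles is phrased for $\Gamma(s)\Gamma(s-1/2)^{-1}\tilde{K}_s(z,w)$: exactly this factor absorbs the archimedean gamma factor and strips off the poles coming from $\Gamma(s-1/2)$ itself. The Eisenstein contributions transfer verbatim, because in the non-compact case the continuous-spectrum integrals in the spectral expansion of $K_s(z,w)$ pick up residues precisely at $s = 1-\rho-2n$ and $s = \rho - 2n$ as $\rho$ ranges over the poles of $E_{jl}(\cdot, s)$ in the relevant half-planes (this is already accounted for in the proof of Theorem~\ref{merom.cont.}).

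There is essentially no obstacle here: all the serious analytic work (convergence of the wave distribution, shifting contours past the line $\Re(s) = 1/2$, residue bookkeeping) was carried out in the proof of Theorem~\ref{merom.cont.} for $K_s(z,w)$, and the identification $\tilde{K}_s(z,w) = K_s(z,w)$ on an open half-plane is an equality of functions rather than merely of certain coefficients, so uniqueness of meromorphic continuation applies without further argument. The only small point worth mentioning in the write-up is that the identification holds pointwise in $(z,w) \in \mathcal F \times \mathcal F$, so the continuation is obtained pointwise in $(z,w)$ as well; this is sufficient for the statement of the corollary, which is formulated for fixed $z,w \in \mathcal F$.
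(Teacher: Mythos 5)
Your proposal is correct and follows exactly the paper's own (one-line) argument: the identity $\tilde K_s(z,w)=K_s(z,w)$ on the half-plane $\Re(s)>\max\{1,|k|\}$ from the preceding proposition, combined with the meromorphic continuation and pole locations of $K_s(z,w)$ from Theorem \ref{merom.cont.}, yields the corollary by uniqueness of meromorphic continuation. Your added remarks on the role of the factor $\Gamma(s-1/2)/\Gamma(s)$ and on the continuation being pointwise in $(z,w)\in\mathcal F\times\mathcal F$ are consistent with the paper and require no changes.
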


\begin{remark}\rm
Corollary \ref{cor:merom cont} illustrates the strength of the approach to constructing Poincar\'e series using generating kernels. Namely, in order to deduce the meromorphic continuation of the automorphic kernel $\tilde{K}_s(z,w)$ from its geometric definition \eqref{geometric aut kernel},  one would have to consider some type of Fourier expansion (e.g. an expansion in rectangular or spherical coordinates at a certain point) and investigate certain properties of the coefficients in the expansion (e.g. uniform boundedness, analyticity, etc.). This is a heavy task, which we overcome by considering the wave distribution acting on the function $g_s$ defined in \eqref{gs function}.
\end{remark}

\end{document}